\documentclass[11pt,reqno]{amsart}
\usepackage{mathrsfs,graphicx,enumerate}
\usepackage{amsmath,amsfonts,amssymb,amscd,amsthm,bbm}
\usepackage[retainorgcmds]{IEEEtrantools}
\usepackage{enumitem}
\numberwithin{equation}{section}
\usepackage{colortbl}
\usepackage{subcaption}
\usepackage{nccmath}
\usepackage{bm}
\usepackage{hyperref}
\usepackage{cite}
\usepackage{upgreek}
\hypersetup{hidelinks}
\usepackage[pagewise]{lineno}
\allowdisplaybreaks[4]
\title[Emergent behaviors of KMT model]{Emergent behaviors of the kinetic Motsch--Tadmor model in a phase-spatially extended setting}

\author[Ha]{Seung-Yeal Ha}
\address[Seung-Yeal Ha]{\newline Department of Mathematical Sciences and Research Institute of Mathematics \newline Seoul National University, Seoul 08826, Republic of Korea}
\email{syha@snu.ac.kr}

\author[Wang]{Xinyu Wang$^{*}$}
\address[Xinyu Wang]{\newline School of Mathematics, \newline Harbin Institute of Technology, Harbin  150001, People's Republic of China  
	\newline and Research Institute of Mathematics \newline Seoul National University, Seoul 08826, Republic of Korea
}
\email{wangxinyumath@hit.edu.cn}

\begin{document}

\newtheorem{theorem}{Theorem}[section]
\newtheorem{lemma}{Lemma}[section]
\newtheorem{corollary}{Corollary}[section]
\newtheorem{proposition}{Proposition}[section]
\newtheorem{remark}{Remark}[section]
\newtheorem{definition}{Definition}[section]
\newtheorem{example}{Example}[section]
\newcommand{\R}{\mathbb R}
\renewcommand{\arraystretch}{1.5}
	\newcommand*\di{\mathop{}\!\mathrm{d}}

\subjclass[2020]{35Q83,35B40,92D50}
\keywords{kinetic Motsch--Tadmor model, noncompact support, weak flocking, normalized alignment}

\thanks{\textbf{Acknowledgment.} 
	The work of S.-Y. Ha is supported by National Research Foundation (NRF) grant funded by the Korea government (MSIT) (RS-2025-00514472). The work of X. Wang is supported by the Natural Science Foundation of China (grants 12601001, 123B2003), the China Postdoctoral Science Foundation (grants 2025M774290), Heilongjiang Province Postdoctoral Funding (grants LBH-Z24167), and the Fundamental Research Funds for the Central Universities. $^{*}$ Corresponding author.}
\begin{abstract}	
The kinetic Motsch--Tadmor (in short, KMT) model is a kinetic flocking model with a normalized communication weight. In this paper, we study the emergent dynamics of the phase-spatially extended KMT model. We first establish a global well-posedness theory in the fully noncompact spatial--velocity setting. To this end, we introduce a direct Lagrangian formulation in which the unbounded part of the initial velocity distribution is separated from an interaction-generated bounded remainder. This decomposition allows us to construct global Lagrangian weak solutions without truncating the velocity distribution and to propagate finite phase-space moments. We then investigate the long-time collective behavior of the resulting solutions. When the initial velocity support is compact while the spatial support is allowed to be noncompact, a time-varying effective-region argument yields a uniform contraction mechanism for the normalized interaction and leads to exponential weak support flocking. When both the spatial and velocity supports are noncompact, support-level flocking is in general impossible. Nevertheless, the same effective-region mechanism, combined with the Lagrangian decomposition and a bootstrap argument for the interaction-generated remainder, yields exponential weak moment flocking. In particular, pairwise spatial moments remain uniformly controlled while velocity fluctuations converge exponentially to zero. These results provide a unified framework for the well-posedness and flocking dynamics of the KMT model beyond the compact-support regime.
\end{abstract}

\maketitle


\section{Introduction} \label{sec:1}
\setcounter{equation}{0}
The term ``{\it flocking}'' denotes a collective phenomenon in which self-propelled particles organize into coherent motion using limited information about their environment and neighboring agents. An early computational formulation was introduced by Reynolds \cite{Re}, followed by the influential works of Vicsek et al. \cite{V-C-B} and Cucker--Smale (in short, CS) \cite{Cucker,cucker2}. Since then, several mathematical models have been proposed in the study of collective flocking behaviors. When the number of agents is sufficiently large, corresponding kinetic models provide an effective framework for describing the collective dynamics of large populations (see \cite{flocking1,flocking2,flocking3,flocking4,CFRT2010,J2,A1,w2,w5}). In this paper, we focus on the phase-spatially extended kinetic Motsch–Tadmor (in short, KMT) model introduced in \cite{MT}. Due to its non-symmetric alignment mechanism and rich flocking dynamics, the MT model has attracted considerable attention from various perspectives \cite{flocking1,W-MT-2026,flocking2}. Nevertheless, most existing studies are restricted to the case where the particle distribution has compact support, which significantly limits the scope of the analysis. In this work, we investigate the emergent dynamics of the spatially extended KMT model without imposing a compact-support assumption.

To set up the stage, we begin with a brief introduction to the MT model \cite{MT}. Let $x_i = x_i(t)$ and $v_i = v_i(t)$ be the position and velocity of the $i$-th particle at time $t$. Then, the MT model is a Newton-like model with a forcing term given by the normalized weighted sum of relative velocities, and its Cauchy problem reads as follows. 
\begin{equation}
\begin{cases} \label{A-1}
\displaystyle {\dot x}_i=v_i, \quad t > 0, \quad i \in [N]:=\left\{1,2,...,N\right\}, \vspace{8pt}\\
\displaystyle {\dot v}_i =\kappa \sum_{j\in [N]}\phi_{ij} (v_j - v_i),\quad \kappa>0, \vspace{8pt}\\
\displaystyle (x_i, v_i) \Big|_{t = 0} = (x_{i0},~v_{i0}),
\end{cases}
\end{equation}
where $\phi$ and $\phi_{ij}$ are the CS communication weight and its normalized counterpart, respectively, given by
\begin{equation}  \label{A-2}
\phi(r) :=\frac{1}{(1+r)^{\beta}},~~ r \geq 0,~~\beta \geq 0 \quad  \mbox{and} \quad \phi_{ij} := \frac{\phi(|x_j - x_i|)}{\sum_{k \in [N]} \phi(|x_{k} - x_i|)}, \quad \forall~i, j \in [N].
\end{equation}
Here, $| \cdot |$ is the standard $\ell^2$-norm in ${\mathbb R}^d$. \newline

The MT model \eqref{A-1}, introduced in \cite{MT}, describes flocking in interacting particle or multi-agent systems and may be viewed as a normalized variant of the CS model. Motsch and Tadmor showed that the model can perform better than the CS model \cite{Cucker,cucker2,k12,w1,w3,w4,M-P,DS1,DS2} in a far-from-flocking regime. It has since been extensively investigated in the literature from various perspectives, e.g., short-range flocking \cite{J2, flocking2}, hydrodynamic limit and kinetic model \cite{flocking1, A1}, multi-cluster flocking \cite{HJZ2025}, spectral analysis \cite{HWYMT}, infinite particle system \cite{W-MT-2026,HWXMT}, and so on. In this paper, we investigate the emergent dynamics of the Cauchy problem for the KMT model  with noncompact phase-spatial support as follows.
\begin{equation} 
	\begin{cases} \label{A-3}
			\displaystyle	\partial_t\mu_t+v\cdot\nabla_x\mu_t+\nabla_v\cdot\bigl(L[\mu_t]\mu_t\bigr)=0, \quad t > 0,~~(x, v) \in {\mathbb R}^{2d},  \vspace{8pt}\\
			\displaystyle 	L[\mu](x,v) :=\kappa\bigl(\bar v_\mu(x)-v\bigr),
			\quad
			\bar v_\mu(x)
			:=
			\frac{\displaystyle\int_{\R^{2d}}\phi(|x-x_{\star}|)v_{\star}\,\mu(\di x_{\star},\di v_{\star})}
			{\displaystyle\int_{\R^{2d}}\phi(|x-x_{\star}|)\,\mu(\di x_{\star},\di v_{\star})}, \vspace{8pt}\\
			\displaystyle \mu \Big|_{t = 0}  =\mu_0.
		\end{cases}
\end{equation} 

\noindent In real-world systems, noncompact scenarios play a critical role in the emergence of cooperative behavior among agents, particularly in the presence of stochastic environmental forcing \cite{Shvydkoy2021,Shvydkoy2024,Shvydkoy2026,ks2}. Many familiar distributions, including Gaussian and Cauchy distributions, have noncompact support, and finite-moment distributions naturally arise in this setting. On the other hand, the noncompact distribution can reflect weak graph network interaction at infinity. In \cite{HaWangXue2025,H-W-CS-2026,HWGKJK2026,H-W-2025}, the authors utilized the infinite graph mean-field limit, average velocity conservation, and kinetic energy dissipation to show that weak flocking behavior emerges for the symmetric kinetic CS-type model with noncompactly supported initial data. The weak flocking behavior in references \cite{HaWangXue2025,H-W-CS-2026,HWGKJK2026,H-W-2025} means that the second moment for the velocity deviation from an initial average velocity tends to zero asymptotically, while the second moment for spatial deviation from the center of mass remains bounded uniformly in time. However, as a variant of the CS model without average velocity conservation and dissipative structure of kinetic energy, the emergent behavior of the KMT model \eqref{A-3} with noncompact spatial-velocity support has yet to be explored. This motivates us to consider the following questions:
\vspace{0.2cm}

\begin{itemize}
	\item \noindent  (Q1):~If the initial data are not compactly supported, does there exist a corresponding well-posedness theory to the solution to the KMT model \eqref{A-3}?
	\vspace{0.2cm}
	\item \noindent  (Q2):~If so, does the solution still show some corresponding emergent behavior under noncompact support?
\end{itemize}
\vspace{0.1cm}

\noindent For (Q1), the global well-posedness to \eqref{A-3} with compact support has been already studied by Gao and Xue \cite{GaoXue} in the class of measure-valued solutions using the particle approximation and the Monge--Kantorovich--Rubinstein distance.  Moreover, the weak solution of the KMT model with special cut-off communication function has also been studied in \cite{kinetic3}. In the fully noncompact finite-moment regime considered here, we include a direct Lagrangian construction tailored to the decomposition. More precisely, let $z=(x,v)$ denote the initial Lagrangian label and we set
\[
m_0:=\int_{\R^{2d}}v_{\star}\,\mu_0(\di x_{\star},\di v_{\star}), \quad q_0(z):=v-m_0.
\]
Let
$
(X(t,z),V(t,z))
$
denote the particle trajectory associated with the kinetic equation, where \(X(t,z)\) and \(V(t,z)\) are its position and velocity components,  respectively. More precisely, the forward particle trajectories issued from $z$ at time $t= 0$ solve
\begin{equation}\label{A-4}
	\begin{cases}
		\displaystyle\dot X(t,z)=V(t,z), \quad t > 0,~~z \in {\mathbb R}^{2d}, \vspace{8pt}\\
		\displaystyle \dot V(t,z)
		=L[\mu_t]\bigl(X(t,z),V(t,z)\bigr),\vspace{8pt}\\
		X(0,z)=x,\quad V(0,z)=v.
	\end{cases}
\end{equation}
Under some suitable assumptions on the communication kernel $\phi$ and the solution \(\mu\), the map $z~\longmapsto
\bigl(v,L[\mu_t](z)\bigr)$ is locally Lipschitz in the phase variables. Hence the characteristic system admits a unique local solution by the classical ODE theory. Moreover, since the acceleration has at most linear growth in \(V\), the usual Grönwall estimate prevents finite-time blow-up, and the characteristic trajectory extends to the whole time interval on which \(\mu\) is defined.
Solving the second equation in \eqref{A-4} by the variation of constants, we have
\begin{align}
\begin{aligned} \label{A-5}
V(t,z)
&=
m_0+e^{-\kappa t} (v-m_0)
+\kappa\int_0^t e^{-\kappa(t-s)}
\Bigl (\bar v_{\mu_s}\bigl(X(s,z)\bigr)-m_0\Bigr)\di s\vspace{6pt} \\
&:= \underbrace{m_0}_{\text{fixed reference}}
	+
	\underbrace{e^{-\kappa t}q_0(z)}_{\text{damped initial tail}}
	+
	\underbrace{U(t,z)}_{\text{interaction remainder}}.
\end{aligned}
\end{align}
Note that the remainder term $U$ is not the fluctuation around the current mean velocity which is not a conserved quantity.  Rather, $U$ (see Lemma \ref{L4.1}) is the interaction-generated correction after the pointwise-unbounded initial tail has been separated into the explicit factor $e^{-\kappa t}q_0$. This makes the existence, uniqueness, and flocking estimates part of a single analytic scheme. For more details, we refer to Proposition~\ref{P3.1} and Section \ref{sec:4}. \newline 

For (Q2), the appropriate notion of emergent behavior depends essentially on whether the velocity support is bounded.  To make this distinction precise, for $p \in [1, \infty)$, we set
\begin{align}
\begin{aligned}  \label{A-6}
	\mathcal X_p(t)
	&:=
	\left(
	\iint_{\R^{4d}}
	|X(t,z)-X(t, z_{\star})|^p
	\mu_0(\di z)\mu_0(\di z_{\star})
	\right)^{1/p}, \\
	\mathcal V_p(t)
	&:=
	\left(
	\iint_{\R^{4d}}
	|V(t,z)-V(t, z_{\star})|^p
	\mu_0(\di z)\mu_0(\di z_{\star})
	\right)^{1/p}.
\end{aligned}
\end{align}
These functionals denote two-point spatial and velocity correlation functions in $L^p$-setting.  When the velocity support is bounded, we also use the velocity-support diameter functional:
\begin{equation}\label{A-7}
	\mathcal D_v(t)
	:=
	\operatorname*{ess \ sup}_{z, z_{\star} \in {\mathbb R}^{2d}}
	|V(t,z)-V(t, z_{\star})|=\mathcal  V_{\infty}(t).
\end{equation}
\begin{definition}[Weak flocking]\label{D1.1}
Let $\mu_t$ be a global Lagrangian weak solution to the KMT model \eqref{A-3}. 
	\begin{enumerate}
		\item $\mu$ exhibits weak support flocking of order \(p\) if the following estimates hold:
		\[		\sup_{t\ge0}\mathcal X_p(t)<\infty,
		\qquad
		\lim_{t\to\infty}\mathcal D_v(t)=0.
		\]
		\item $\mu$ exhibits weak moment flocking of order \(p\) if the following estimates hold:
		\[
		\sup_{t\ge0}\mathcal X_p(t)<\infty,
		\qquad
		\lim_{t\to\infty}\mathcal V_p(t)=0.
		\]
	\end{enumerate}
\end{definition}
We use the communication weight function \eqref{A-2} as a particular example to show our two weak flocking results. That is, we set $\beta=\alpha\in[0,1]$ as in Definition \ref{D3.1} and Remark \ref{R3.1}. We first consider initial data with compact velocity support but possibly noncompact spatial support.  In this regime, velocity diameter \(\mathcal D_v(t)\) is finite and  it is a velocity observable.  Rather than enclosing the entire spatial distribution in a bounded region, we select at each time a moving effective region carrying at least one half of the total mass (see Lemma \ref{L5.1}).  The normalization in \(\eqref{A-3}_2\), together with the algebraic moderateness of the communication function, implies that every normalized interaction kernel assigns a common positive amount of mass to this region (see Lemma \ref{L5.2}).  The corresponding normalized averaging operator therefore satisfies a Dobrushin-type contraction estimate for bounded velocity observables (see Lemma \ref{L5.3} and Lemma \ref{L5.4}).  More precisely, the velocity-support diameter satisfies an inequality of the form
\[
\begin{cases}
\displaystyle D^+\mathcal X_p(t)\le \mathcal D_v(t), \quad \mbox{a.e.}~t > 0, \vspace{6pt}\\
\displaystyle D^+\mathcal D_v(t)
\le
- \frac{\kappa}{2C_{\phi}}
\min\bigl\{6^{-\alpha},d_{\phi}\bigr\}
\bigl(1+\mathcal X_p(t)\bigr)^{-\alpha}
\mathcal D_v(t), \quad  d_\phi>0, 
\end{cases}
\]
where $D^+$ is the upper Dini derivative, $C_\phi, d_\phi$ are positive structural constants associated with $\phi$ (see Definition \ref{D3.1})  and $\alpha \in [0,1].$ Then, the nonlinear Lyapunov functional in \cite{flocking4} yields the following weak support flocking of order $p$:
\[
\sup_{t\ge0}\mathcal X_p(t)<\infty,
\quad
\mathcal D_v(t)\le Ce^{-\lambda t}, \quad t \geq 0.
\]
For more details, we refer to Theorem \ref{T3.1} and Section \ref{sec:5}.

To treat fully noncompact spatial-velocity support regime directly, we again use the variation-of-constants formula for the velocity characteristic  decomposition \eqref{A-5}. Two analytic properties of \eqref{A-5} are crucial.  First, although \(q_0\) does not belong to \(L^\infty(\mu_0)\), its normalized average \(\mathscr T_{X(t)}q_0\) (see \eqref{D-2} for the definition of $\mathscr T_{X(t)}$) is uniformly bounded in the observation variable, whenever the required \(p\)-moments are finite (see Lemma~\ref{L6.2}).  Thus the normalized MT interaction converts the fixed unbounded velocity tail into a bounded forcing term.  Second, the same time-varying effective-region argument gives a common-mass contraction for the oscillation of \(U\). More precisely, we set the diameter of $U$ and auxiliary spatial scale as follows.
$$
\mathcal{D}_U(t):=
\operatorname*{ess\,sup}_{z,z_{\star}\in{\mathbb R}^{2d}}
|U(t,z)-U(t, z_{\star})|, \quad 
\mathcal{S}_U(t):=
1+\mathcal X_p(0)
+\frac{1}{\kappa}\mathcal V_p(0)
+\int_0^t\mathcal D_U(s)\,\di s.
$$
Then, it follows from Lemma \ref{L6.3} and Lemma \ref{L6.4} that there exist some positive constants $a,b>0$ such that
\begin{equation}\label{A-8}
\begin{cases}
\displaystyle  1+\mathcal X_p(t)\le \mathcal{S}_U(t), \quad t \geq 0, \vspace{6pt}\\
\displaystyle 
D^+\mathcal{D}_U(t)
	\le
	-a\mathcal{S}_U(t)^{-\alpha}\mathcal{D}_U(t)
	+
	be^{-\kappa t}\mathcal{S}_U(t)^\alpha, \vspace{6pt} \\
\displaystyle \mathcal{S}_U'(t)=\mathcal{D}_U(t),  \quad \mbox{a.e.,}~t > 0, \vspace{6pt}\\
\displaystyle ({\mathcal D}_U, \mathcal{S}_U) \Big|_{t = 0+} = \Big (0,~1+\mathcal X_p(0)
+\frac{1}{\kappa}\mathcal V_p(0) \Big).
\end{cases}
\end{equation}
Then, this set of estimates in \eqref{A-8} implies exponential weak moment flocking of order $p$:
$$
\sup_{t\ge0}\mathcal X_p(t)<\infty,
\quad
\mathcal V_p(t)\le Ce^{-\lambda t}.
$$  
For more details, we refer to Theorem \ref{T3.2} and Section \ref{sec:6}.\newline 

A main novelty of the present work is that the time-varying effective-region method is developed without momentum conservation. In the previous symmetric CS-type studies \cite{H-W-2025,H-W-2026,H-W-CS-2026,HWGKJK2026,k6}, the conservation of total momentum was crucially used in the flocking analysis, i.e., conservation of the average velocity determines the center-of-mass motion explicitly, so the effective region can be localized around a fixed reference state, whereas the MT normalization destroys momentum conservation, and no such reference frame is available a priori. Moreover, in the compact-velocity regime of the previous works, the velocity diameter may remain constant in time \cite{HWGKJK2026,HWY}, while in the present MT model, the velocity-support diameter itself contracts exponentially. Thus, the normalized interaction not only allows us to recover weak flocking in the absence of momentum conservation but also yields a stronger dynamical contraction mechanism away from equilibrium. This leads to exponential weak support flocking for compact velocity support and exponential weak moment flocking in the fully noncompact regime.\newline

The rest of this paper is organized as follows. In Section~\ref{sec:2}, we recall the concepts of weak solution and characteristic flow, and then we summarize previous results. In Section \ref{sec:3}, we describe the framework and summarize our main results. In Section~\ref{sec:4}, we provide a global Lagrangian well-posedness in the fully noncompact finite-moment regime. In Section~\ref{sec:5}, we show the exponential weak support flocking for compact velocity support. In Section~\ref{sec:6}, we establish exponential weak moment flocking in the fully noncompact regime. Finally, Section~\ref{sec:7} is devoted to a brief summary of the main results and some remaining issues for future work. In Appendix \ref{app-A}, we provide the proof of Lemma \ref{L6.5}.

\vspace{0.2cm}

 \section{Preliminaries}\label{sec:2}
 \setcounter{equation}{0}
In this section, we recall basic notions for weak solution and characteristic flow to be used in later analysis, and recall previous results on the emergent dynamics for the KMT model in a phase-spatially confined setting.

\subsection{Weak solution and characteristic flow} \label{sec:2.1}
In this subsection, we recall the weak formulation of the Cauchy problem \eqref{A-3}. Throughout the main noncompact analysis, we use $\mu_t$ for a measure-valued solution; whenever $\mu_t$ is absolutely continuous with respect to the Lebesgue measure $\di z$, we write 
\[ \mu_t(\di z)=f_t(z)\di z \]
and reserve $f_t$ for its density. Next, we recall the concepts of a density weak solution and a coupling of two measures. 
\begin{definition}\label{D2.1}
	\emph{(Density weak solution).}
	For $\tau\in(0,\infty]$, let
	$f\in {\mathcal C}([0,\tau);L^1(\mathbb{R}^{2d}))\cap L^\infty_{\rm loc}([0,\tau)\times\mathbb{R}^{2d})$ be a (density) weak solution to \eqref{A-3} with a nonnegative initial datum
	$f^{\mathrm{in}} \in(L^1\cap L_+^{\infty})(\mathbb{R}^{2d})$ if the following relations hold.
	\begin{enumerate}
		\item For every $\psi\in{\mathcal C}_c^1(\mathbb{R}^{2d})$,
		\[
		\mbox{the map}~~t\longmapsto\int_{\mathbb{R}^{2d}}\psi(z)f_t(z)\,\di z \quad \mbox{is continuous}. 
		\]
		\item For every $\zeta\in{\mathcal C}_c^1([0,\tau)\times\mathbb{R}^{2d})$ and every $t\in[0,\tau)$,
		\begin{align*}
		\begin{aligned}
			& \int_{\mathbb{R}^{2d}}\zeta(t,z)f_t(z)\,\di z - \int_{\mathbb{R}^{2d}}\zeta(0,z)f^{\mathrm{in}}(z)\,\di z \\
			& \hspace{2.5cm} =\int_0^t\int_{\mathbb{R}^{2d}}
			\Bigl(\partial_s\zeta+v\cdot\nabla_x\zeta+\nabla_v\zeta\cdot L[f_s]\Bigr)
			f_s(z)\,\di z\,\di s.
		\end{aligned}
		\end{align*}
	\end{enumerate}
\end{definition}
\begin{definition}\label{D2.2}
Let $\mu$ and $\nu$ be probability measures on $\R^d$. 
\begin{enumerate}
\item
A joint probability measure $\pi\in{\mathcal P}(\R^d\times\R^d)$  is called a coupling of $\mu$ and $\nu$ if its first and second marginals are $\mu$ and $\nu$, respectively.
\vspace{0.2cm}
\item
We denote the set of all such couplings by
\[
\Pi(\mu,\nu)
:=\left\{\pi\in{\mathcal P}(\R^d\times\R^d):
 ({\mathbb P}_1)_{\#}\pi=\mu,\quad  ({\mathbb P}_2)_{\#}\pi=\nu\right\},
\]
where ${\mathbb P}_1:~(x_1, x_2) \in {\mathbb R}^{2d}  \quad \mapsto \quad x_1 \in {\mathbb R}^d$ and  ${\mathbb P}_2:~(x_1, x_2) \in {\mathbb R}^{2d}~~\mapsto~~x_2 \in {\mathbb R}^d$ represent the orthogonal projections on the first and second argument spaces, respectively. 
\end{enumerate}
\end{definition}
Next, we describe a particle trajectory associated with the KMT model. In the literature, particle trajectory is also called ``bi-characteristics or simply characteristics". For $z = (x, v) \in {\mathbb R}^d \times {\mathbb R}^d$, we define a particle trajectory 
\[ (X(t), V(t)) : = (X(t:0,z), V(t:0,z)) \]
as a unique solution to the following Cauchy problem:
\begin{equation} \label{B-1}
	\begin{cases}
		\displaystyle \dot X(t)=V(t), \quad t > 0,  \vspace{6pt} \\
		\displaystyle \dot V(t)=L[f_t](X(t),V(t)),  \vspace{6pt}\\
		\displaystyle X(0)=x, \quad  V(0)=v,
	\end{cases}
\end{equation}
where the velocity alignment force $L[f_t]$ along the particle trajectory is given by the following relation:
\begin{equation}\label{B-2}
\displaystyle	L[f_t](X(t),V(t))=-\kappa\frac{\int_{\mathbb{R}^{2d}} \phi( |X(t) -x_{\star}|) \left(V(t)-v_{\star}\right)  f_t(z_{\star}) \di z_{\star}}{\int_{\mathbb{R}^{2d}} \phi( |X(t) -x_{\star}|) f_t(z_{\star}) \di z_{\star}}, \quad \di z_{\star} := \di x_{\star} \di v_{\star}.
\end{equation}
Note that $L[f_t]$ in \eqref{B-2} is continuous in $t$, locally Lipschitz continuous in the state variables $(X,V)$, and has at most linear growth in $V$ on every finite time interval. Thus, the standard Cauchy--Lipschitz theory yields the local existence and uniqueness of the characteristic system \eqref{B-1}. Moreover, the linear-growth estimate prevents finite-time blow-up and hence extends the characteristic trajectories globally in time. Consequently, for each fixed $t\geq 0$, the characteristic flow
\[
\Phi_t(z):=(X(t: 0,z),V(t: 0,z))
\]
is well defined. Whenever the forward and backward characteristic systems are uniquely solvable, $\Phi_t$ is a homeomorphism of the phase space. Moreover, we identify the initial density with the measure
$
\mu_0(\di z)
=
f^{\mathrm{in}}(z)\di z,
$
the solution is transported by the characteristic flow in the sense that
$
\mu_t
=
(\Phi_t)_{\#}\mu_0.$
Equivalently, we have
\begin{equation}\label{B-3}
	\int_{\mathbb{R}^{2d}}
	\psi(z)f_t(z)\,\di z
	=
	\int_{\mathbb{R}^{2d}}
	\psi\bigl(X(t,0,z),V(t,0,z)\bigr)
f^{\mathrm{in}}(z)\di z,
	\quad
	\forall\,\psi\in{\mathcal C}_b^1(\mathbb{R}^{2d}).
\end{equation}

 \subsection{Previous results}\label{sec:2.2}
In this subsection, we recall previous results in phase-spatially confined setting and existence of global weak solutions. For a nonnegative density $h$ on $\mathbb R^{2d}$, we define projected supports:
\[
\Omega_x(h):=\operatorname{{\mathbb P}}_x\operatorname{spt}h,
\qquad
\Omega_v(h):=\operatorname{{\mathbb P}}_v\operatorname{spt}h,
\]
and, if they are bounded, we also define radius and diameter functionals of the projected supports:
\[
\begin{cases}
\displaystyle \mathcal R_x(h) :=\sup_{x\in\Omega_x(h)}|x|, \quad 
\mathcal R_v(h) :=\sup_{v\in\Omega_v(h)}|v|,\vspace{6pt}\\
\displaystyle \mathcal D_x(h) :=\sup_{x,x_{\star}\in\Omega_x(h)}|x-x_{\star}|, \quad 
\mathcal D_v(h) :=\sup_{v,v_{\star}\in\Omega_v(h)}|v-v_{\star}|.
\end{cases}
\]
In what follows, we recall a set of previous results on the strong flocking and a global weak solution to \eqref{A-3} in a phase-spatially confined setting. For an initial datum $f^{\mathrm{in}}$ with a compact support, we set 
\[
 \Lambda_\phi(t) := \int_0^t \phi(s) \, \di s, \quad  {\mathcal D}^{\infty} :=  \Lambda_\phi^{-1}\left(\frac{\mathcal{D}_{v}(f^{\mathrm{in}})}{\kappa} + \Lambda_\phi(\mathcal{D}_{x}(f^{\mathrm{in}}))\right).
\]
\begin{proposition}\emph{(Strong flocking \cite{flocking1,flocking2}) }\label{P2.1}
	Let \(f\) be a global weak solution to \eqref{A-3} with compactly supported initial datum \(f^{\mathrm{in}}\):
\[ \mathcal{D}_{x}(f^{\mathrm{in}}) < +\infty \quad \mbox{and} \quad \mathcal{D}_{v}(f^{\mathrm{in}})< +\infty. \]
Then the following assertions hold.
\begin{enumerate}
\item
Velocity radius and diameter functionals are contractive:
	\[\mathcal{R}_{v}(f_t)\le \mathcal{R}_{v}(f^{\mathrm{in}}), \quad \mathcal{D}_{v}(f_t)\le \mathcal{D}_{v}(f^{\mathrm{in}}), \quad t > 0. \]
\item	
	If the communication weight function  and coupling strength satisfy 
	\[
\mathcal{D}_{v}(f^{\mathrm{in}})<\kappa \int_{\mathcal{D}_{x}(f^{\mathrm{in}})}^\infty \phi(r) \, \di r  ,
	\]
	then, strong flocking emerges exponentially fast:  
		\[
	\sup_{0 \leq t < \infty} \mathcal{D}_{x}(f_t) \leq \mathcal{D}^{\infty}, \quad \mathcal{D}_{v}(f_t) \leq \mathcal{D}_{v}(f^{\mathrm{in}}) e^{-\kappa\phi({\mathcal D}^{\infty})t}, \quad t > 0. \]
\end{enumerate}	
\end{proposition}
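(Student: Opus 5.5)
We argue along the characteristic flow \eqref{B-1}, using the push-forward identity $\mu_t=(\Phi_t)_{\#}\mu_0$ from \eqref{B-3}. This lets the projected supports of $f_t$ be read off from the images of the initial support under $\Phi_t$. Write
\[
\mathcal R_v(t):=\sup_{z\in \operatorname{spt} f^{\mathrm{in}}}|V(t,z)|, \quad \mathcal D_x(t):=\sup_{z,z_\star}|X(t,z)-X(t,z_\star)|, \quad \mathcal D_v(t):=\sup_{z,z_\star}|V(t,z)-V(t,z_\star)|,
\]
with suprema over the initial support. The key structural fact is that, by \eqref{B-2}, the alignment force has the form $\kappa(\bar v_{f_t}(X)-V)$. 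Here $\bar v_{f_t}(X)$ is a convex combination, with probability weights $\phi(|X-x_\star|)f_t/\int\phi f_t$, of velocities lying in $\Omega_v(f_t)$. Hence $\bar v_{f_t}(X)$ lies in the closed convex hull of the current velocity support.

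\emph{Step 1 (assertion (1)).} First, take an extremal particle, i.e.\ a characteristic attaining $\mathcal R_v(t)$. For it,
\[
\tfrac12\tfrac{d}{dt}|V|^2=\kappa\bigl(\bar v\cdot V-|V|^2\bigr)\le \kappa(\mathcal R_v|V|-|V|^2)=0.
\]
Since $\mathcal R_v$ is a supremum of Lipschitz functions, this gives $D^+\mathcal R_v(t)\le 0$ a.e. Second, for $\mathcal D_v$, fix a unit vector $e$ and let $M_e(t)=\sup_z V(t,z)\cdot e$ and $m_e(t)=\inf_z V(t,z)\cdot e$. At a maximiser,
\[
\tfrac{d}{dt}(V\cdot e)=\kappa(\bar v\cdot e-V\cdot e)\le 0,
\]
because $\bar v\cdot e\le M_e$; symmetrically, $m_e$ is nondecreasing. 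Finally, $\mathcal D_v=\sup_e (M_e-m_e)$, which is therefore nonincreasing.

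\emph{Step 2 (contraction rate).} We refine the estimate of Step 1 using a lower bound on the weights. All particles lie within distance $\mathcal D_x(t)$ of each other, so
\[
\phi(|X-x_\star|)\ge \phi(\mathcal D_x(t)), \qquad \phi\le 1.
\]
Hence each normalized kernel $\phi(|X-\cdot|)f_t/\int\phi f_t$ is bounded below by $\phi(\mathcal D_x)$ times the probability measure $f_t$. Compare two particles $z,z_\star$ realising $M_e$ and $m_e$, and write $\bar v=\phi(\mathcal D_x)\,\langle v\rangle+(1-\phi(\mathcal D_x))w$, where $\langle v\rangle$ is the mean velocity and $w$ lies in the convex hull; do the same for $\bar v_\star$ with the same $\langle v\rangle$. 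Then
\[
(\bar v-\bar v_\star)\cdot e\le (1-\phi(\mathcal D_x))(M_e-m_e).
\]
This yields
\[
\tfrac{d}{dt}(M_e-m_e)\le -\kappa\phi(\mathcal D_x)(M_e-m_e).
\]
Taking the supremum over $e$ gives the Dini inequalities
\[
D^+\mathcal D_x\le \mathcal D_v,\qquad D^+\mathcal D_v\le -\kappa\phi(\mathcal D_x)\mathcal D_v .
\]
I expect this decomposition to be the main obstacle. The normalized kernels are not symmetric, so the usual Cucker--Smale pairwise cancellation is unavailable. The argument works only because both normalized measures share the common lower piece $\phi(\mathcal D_x)f_t$ (a Doeblin/Dobrushin argument), so that piece cancels in the difference. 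The Dini-derivative bookkeeping for suprema over a possibly non-attained family is handled by a standard Danskin-type lemma.

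\emph{Step 3 (Lyapunov functional).} Set $\mathcal L_\pm(t):=\mathcal D_v(t)\pm\kappa\Lambda_\phi(\mathcal D_x(t))$. Combining the two inequalities of Step 2,
\[
D^+\mathcal L_\pm\le -\kappa\phi(\mathcal D_x)\mathcal D_v\pm\kappa\phi(\mathcal D_x)\mathcal D_v\le 0.
\]
From $\mathcal L_+$ being nonincreasing,
\[
\kappa\Lambda_\phi(\mathcal D_x(t))\le \mathcal D_v(f^{\mathrm{in}})+\kappa\Lambda_\phi(\mathcal D_x(f^{\mathrm{in}})).
\]
The smallness hypothesis makes the right-hand side less than $\kappa\Lambda_\phi(\infty)$, so inverting $\Lambda_\phi$ gives $\mathcal D_x(t)\le\mathcal D^\infty$ for all $t$. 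Substituting this bound into $D^+\mathcal D_v\le-\kappa\phi(\mathcal D^\infty)\mathcal D_v$ and applying Grönwall yields the exponential decay. Since the projected-support diameters of $f_t$ coincide with the flow quantities above, the estimates transfer to $\mathcal D_x(f_t)$ and $\mathcal D_v(f_t)$.
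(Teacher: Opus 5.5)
Your argument is correct. The paper quotes this proposition from the literature without proof, but your argument is precisely the compact-support specialization of its own Section~\ref{sec:5} mechanism. The whole support plays the role of the effective region, so the common submeasure $\phi(\mathcal D_x)f_t$ replaces $\sigma_X$ in Lemma~\ref{L5.2}, and $\mathcal D_v+\kappa\Lambda_\phi(\mathcal D_x)$ replaces the Lyapunov functional $g+A\Psi_\alpha(f)$ of Lemma~\ref{L5.5}. The only cosmetic difference is how you obtain the Dini inequalities: you use extremal particles and a Danskin-type lemma, which is legitimate here because compactness makes the suprema attained, whereas Lemma~\ref{L5.3} takes essential suprema directly in the Duhamel formula \eqref{E-18}.
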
	
\begin{remark}\label{R2.1}
	The previous results \cite{flocking1,MT,flocking2} rely on the fact that the communication function has a positive lower bound under compact support for any finite time $t$, which can obtain the flocking behavior of the solution. However, under noncompact position-velocity support, the above energy functional method fails since $\mathcal{D}_x(f^{\mathrm{in}})=\infty$ and $\mathcal{D}_v(f^{\mathrm{in}})=\infty$.
\end{remark}
Next, we recall an existence criterion for the KMT equation which is adapted from \cite{kinetic3}.
\begin{proposition}[Existence criterion \cite{kinetic3}]\label{P2.2}
Suppose the initial datum $f^{\mathrm{in}}$ satisfies
\[
f^{\mathrm{in}} \in(L^1\cap L_+^\infty)(\mathbb R^{2d}),
\quad
(|x|^2+|v|^2)f^{\mathrm{in}} \in L^1(\mathbb R^{2d}).
\]
If the regularized approximations $\{f^\varepsilon\}_{\varepsilon>0}$ used in the weak-solution construction satisfy the  uniform second-moment bound condition:~for every $T>0$,
\begin{equation}\label{B-4}
\sup_{\varepsilon>0}\sup_{0\le t\le T}\int_{\mathbb R^{2d}}(|x|^2+|v|^2)f_t^\varepsilon(z)\,\di z\le C_T,
\end{equation}
with $C_T<\infty$ independent of the regularization parameter, then one may pass to the limit in the approximation scheme and obtain a weak solution
$
f\in\mathcal C([0,T];L^1(\mathbb R^{2d}))\cap L_+^\infty([0,T]\times\mathbb R^{2d})
$
to \eqref{A-3}.
\end{proposition}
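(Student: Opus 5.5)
The plan is a standard compactness argument: pass to the limit $\varepsilon\to0$ in the regularized family $\{f^\varepsilon\}$, using \eqref{B-4} as the only source of uniformity in $\varepsilon$. \textbf{Approximation.} For the scheme I would take $f^{\mathrm{in},\varepsilon}$ to be a truncation of $f^{\mathrm{in}}$ to the ball $\{|x|+|v|\le 1/\varepsilon\}$, mollified and renormalized to unit mass. Each $f^{\mathrm{in},\varepsilon}$ is compactly supported, so the compact-support theory (Gao--Xue, together with Proposition \ref{P2.1}) gives global solutions $f^\varepsilon$. These are transported by characteristics as in \eqref{B-1}--\eqref{B-3}.

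\textbf{Uniform bounds.} Mass is conserved along the flow. Since $\nabla_v\cdot L[f](x,v)=-d\kappa$, the density along characteristics satisfies $f^\varepsilon_t(\Phi_t(z))=e^{d\kappa t}f^{\mathrm{in},\varepsilon}(z)$, hence
\[
\|f^\varepsilon_t\|_{L^\infty}\le e^{d\kappa T}\|f^{\mathrm{in}}\|_{L^\infty},\qquad 0\le t\le T,
\]
uniformly in $\varepsilon$. The key lower bound is for the normalizing denominator. By the triangle inequality, $\phi(|x-x_\star|)\ge(1+|x|)^{-\beta}(1+|x_\star|)^{-\beta}$. Combined with \eqref{B-4} and Chebyshev, at least half of the mass lies in a ball $\{|x_\star|\le R_T\}$ with $R_T$ independent of $\varepsilon$. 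Therefore
\[
\int\phi(|x-x_\star|)f^\varepsilon_t\,\di z_\star\ \ge\ \tfrac12(1+R_T)^{-\beta}(1+|x|)^{-\beta}=:c_T(1+|x|)^{-\beta}.
\]
Since the numerator is bounded by $\int|v_\star|f^\varepsilon_t\le C_T^{1/2}$, we get $|\bar v_{f^\varepsilon_t}(x)|\le C'_T(1+|x|)^{\beta}$. Thus $L[f^\varepsilon_t]$ is bounded on compact sets of phase space, uniformly in $\varepsilon$ and $t\le T$.

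\textbf{Compactness.} The $L^1\cap L^\infty$ and second-moment bounds give weak-$*$ compactness in $L^\infty$ and tightness, so no mass escapes. For $\psi\in\mathcal C^1_c$, the weak formulation and the local bound on $L$ show that $t\mapsto\int\psi f^\varepsilon_t$ is equi-Lipschitz. Arzelà--Ascoli and a diagonal argument over a countable dense set of test functions then give a subsequence with $f^\varepsilon_t\rightharpoonup f_t$ weakly in $L^1$ for every $t\in[0,T]$, uniformly in time.

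\textbf{Nonlinear term (main obstacle).} Passing to the limit in the normalized term is where I expect the real difficulty. For fixed $t$ and $x$, $\phi(|x-\cdot|)$ is bounded continuous, so the denominator converges. For the numerator $\int\phi(|x-x_\star|)v_\star f^\varepsilon_t$, the factor $v_\star$ is unbounded; I would handle it using uniform integrability of $|v_\star|f^\varepsilon_t$, which follows from the uniform second moment. Both quantities are uniformly Lipschitz in $x$, since $\phi$ is Lipschitz, so the convergence is locally uniform in $x$. With the uniform denominator lower bound, $\bar v_{f^\varepsilon_t}\to\bar v_{f_t}$ locally uniformly in $(t,x)$. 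Then $\nabla_v\zeta\cdot L[f^\varepsilon_s]f^\varepsilon_s\to\nabla_v\zeta\cdot L[f_s]f_s$, as a strongly convergent bounded factor times a weakly convergent one, and dominated convergence in $s$ gives the weak formulation of Definition \ref{D2.1}.

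\textbf{Strong continuity in time.} Finally, $f\in\mathcal C([0,T];L^1)$ is needed, not just weak continuity. For this I would use that the limit field $(v,L[f_t])$ is locally Lipschitz in $z$ with the linear growth above, so $f$ is the push-forward of $f^{\mathrm{in}}$ by a regular flow, as in \eqref{B-3}. Continuity of translations in $L^1$, together with tightness, then yields strong $L^1$ continuity.
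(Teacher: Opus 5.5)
The paper contains no proof of Proposition~\ref{P2.2}. It is quoted from \cite{kinetic3} and used only as a black box in Proposition~\ref{P2.3}, so your argument has to stand on its own, and it essentially does.

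Its key observation is that the nonlinearity enters only through the spatial averages $\int\phi(|x-x_\star|)f_t\,\di z_\star$ and $\int\phi(|x-x_\star|)v_\star f_t\,\di z_\star$, which use a Lipschitz kernel. Three facts then give locally uniform convergence of $\bar v_{f^\varepsilon_t}$:
weak convergence of $f^\varepsilon_t$ at each fixed $t$;
uniform integrability of $|v|f^\varepsilon_t$ from \eqref{B-4};
the Chebyshev lower bound on the denominator.
No averaging-lemma compactness is needed; that tool is needed only for unsmoothed, local alignment terms.

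Your ingredients are in fact the ones the paper itself uses later, in Step D of Section~\ref{sec:4.4}, for its Lagrangian solution. These are the half-mass bound $\inf_{t\le T}\inf_{x\in K}\mathcal Z_t(x)\ge\frac12\phi(R+\sup_K|x|)$, local Lipschitz continuity of $\bar v_{\mu_t}$, and Liouville's formula $J_t=e^{-\kappa dt}$. Strong $L^1$ continuity there comes from the isometry $P_tg=e^{\kappa dt}g\circ\Phi_t^{-1}$ plus density of $C_c^\infty$. That, rather than ``continuity of translations'', is the mechanism behind your last step.

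The trade-off is this. The paper's direct Lagrangian construction proves the moment bound and uniqueness. Your scheme takes \eqref{B-4} as a hypothesis and gives existence along a subsequence, which is all the statement claims.

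Two points should be made explicit.

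First, your bound $|\bar v_{f_t}(x)|\le C_T'(1+|x|)^\beta$ is linear growth only for $\beta\le1$, which is the range the paper actually works in. For $\beta>1$, global solvability of the limiting characteristics is not automatic. The strong-continuity step would then need a local argument, e.g.\ DiPerna--Lions renormalization combined with tightness.

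Second, the claim ``$f$ is the push-forward of $f^{\mathrm{in}}$'' needs one of two justifications:
a uniqueness theorem for the linear continuity equation with a locally Lipschitz field, in the class of weakly continuous solutions whose second moments are bounded by $C_T$;
or passing to the limit directly in $f^\varepsilon_t=(\Phi^\varepsilon_t)_{\#}f^{\mathrm{in},\varepsilon}$. The approximate fields are equi-locally-Lipschitz and, by the equation and \eqref{B-4}, equicontinuous in $t$, hence locally uniformly convergent in $(t,z)$.
In the first option, note that $f_t$ inherits the bound $C_T$ by lower semicontinuity. You also need this inheritance for the denominator bound of the limit.
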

With the help of Proposition~\ref{P2.2}, we can derive global existence for \eqref{A-3} in the velocity-confined setting:  
\begin{proposition}\label{P2.3}
Suppose the initial datum $f^{\mathrm{in}}$ satisfies
\[
f^{\mathrm{in}} \in(L^1\cap L_+^\infty)(\mathbb R^{2d}),
\quad
|x|^2f^{\mathrm{in}} \in L^1(\mathbb R^{2d}),
\quad
\mathcal R_v(f^{\mathrm{in}})<\infty.
\]
Then the following assertions hold.
\begin{enumerate}
\item
There exists a global weak solution to \eqref{A-3} such that
\begin{equation}\label{B-5}
\mathcal R_v(f_t)\le\mathcal R_v(f^{\mathrm{in}}), \quad 
\mathcal D_v(f_t)\le\mathcal D_v(f^{\mathrm{in}}), \quad 
\int_{\mathbb R^{2d}}f_t(z)\di z
=\int_{\mathbb R^{2d}}f^{\mathrm{in}}(z)\di z, \quad t > 0.
\end{equation}
\item
For every $T>0$,
\begin{equation}\label{B-6}
\sup_{0\le t\le T}\int_{\mathbb R^{2d}}(|x|^2+|v|^2)f_t(z)\,\di z<\infty.
\end{equation}
\end{enumerate}
\end{proposition}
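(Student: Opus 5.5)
The strategy is to verify the uniform second-moment hypothesis \eqref{B-4} of Proposition~\ref{P2.2} for the regularized approximations $\{f^\varepsilon\}$, using the velocity-confined structure of the data. The limit $f$ then exists on every $[0,T]$, and the properties \eqref{B-5}--\eqref{B-6} follow by passing to the limit. Since $T$ is arbitrary and the bounds are uniform in $T$ where needed, a diagonal argument in $T$ (or uniqueness-free concatenation via the a priori bounds) gives a global weak solution.

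\textbf{Step 1: maximum principle for velocities at the approximate level.} For each regularized problem, the characteristics \eqref{B-1} are well defined. Along them,
\[
\dot V=\kappa\bigl(\bar v_{f^\varepsilon_t}(X)-V\bigr),
\]
and $\bar v_{f^\varepsilon_t}(X)$ is a convex combination of velocities in $\Omega_v(f^\varepsilon_t)$, because the normalized weight $\phi/\int\phi f^\varepsilon$ is a probability density. A standard Dini-derivative argument then shows that
\[
R(t):=\sup_{z}|V(t,z)|
\]
satisfies $D^+R\le \kappa(R-R)=0$. Hence $\mathcal R_v(f^\varepsilon_t)\le \mathcal R_v(f^{\rm in})$.

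For the diameter, take two characteristics and a unit vector $e$. The quantity $\max_z e\cdot V$ is nonincreasing and $\min_z e\cdot V$ is nondecreasing, because $e\cdot\bar v$ lies between these extremes. This gives $\mathcal D_v(f^\varepsilon_t)\le\mathcal D_v(f^{\rm in})$. Mass conservation follows from the pushforward identity \eqref{B-3} with $\psi\equiv1$, approximated by cutoffs. If the regularization mollifies $f^{\rm in}$, the velocity radius may increase by at most $\varepsilon$, which is harmless.

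\textbf{Step 2: the uniform second-moment bound.} The velocity part is immediate:
\[
\int|v|^2f^\varepsilon_t\,\di z\le \mathcal R_v(f^{\rm in})^2\|f^{\rm in}\|_{L^1}.
\]
For the spatial part, $|X(t,z)|\le|x|+t\,\mathcal R_v(f^{\rm in})$. By \eqref{B-3} (extended to $|x|^2$ via monotone truncation),
\[
\int|x|^2 f^\varepsilon_t\,\di z\le 2\int|x|^2f^{\rm in}\,\di z+2T^2\mathcal R_v(f^{\rm in})^2\|f^{\rm in}\|_{L^1},
\]
which is independent of $\varepsilon$. This is exactly \eqref{B-4}, so Proposition~\ref{P2.2} yields a weak solution on $[0,T]$, and \eqref{B-6} comes from the same bound.

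\textbf{Step 3: passage to the limit of the support bounds.} This is the main obstacle, because supports are not continuous under weak convergence. I would argue via test functions. Since $f^\varepsilon_t\rightharpoonup f_t$ weakly and $f^\varepsilon_t$ vanishes for $|v|>\mathcal R_v(f^{\rm in})+\varepsilon$, for any nonnegative $\psi\in\mathcal C_c$ supported in $\{|v|>\mathcal R_v(f^{\rm in})\}$ we get $\int\psi f_t=0$, so $\mathcal R_v(f_t)\le\mathcal R_v(f^{\rm in})$.

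The diameter is handled in the same way. For each direction $e$, the sets $\{e\cdot v>\max e\cdot v_0\}$ and $\{e\cdot v<\min e\cdot v_0\}$ are open and carry zero mass in the limit. Intersecting over a countable dense set of directions gives $\Omega_v(f_t)\subset$ the closed convex hull of $\Omega_v(f^{\rm in})$, whose diameter equals $\mathcal D_v(f^{\rm in})$.

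Mass conservation passes to the limit using tightness, which is provided by the uniform second moments. Tightness upgrades weak convergence against $\mathcal C_c$ to convergence against $\mathcal C_b$, so $\int f_t=\int f^{\rm in}$.

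Finally, extend to global time. One option is to take $T=n$ and a diagonal subsequence, so that the limits are consistent on nested intervals. Alternatively, restart from $f_T$, which again satisfies the hypotheses because $|x|^2f_T\in L^1$, $f_T\in L^\infty$, and $\mathcal R_v(f_T)\le\mathcal R_v(f^{\rm in})$.
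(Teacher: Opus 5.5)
Your proposal is correct and follows essentially the same route as the paper. Velocity confinement (the normalized field is a convex average, so the convex hull of the velocity support is forward invariant) gives the uniform bound \eqref{B-4} for the approximations, Proposition~\ref{P2.2} then supplies the solution, and the support and moment bounds pass to the limit. The differences are minor: you bound the spatial moment along characteristics via $|X(t,z)|\le |x|+t\,\mathcal R_v(f^{\mathrm{in}})$ rather than through the paper's Eulerian identity $\frac{\di}{\di t}M_{x,2}=2\int x\cdot v\,f_t\,\di z$, and you spell out the limit passage of the supports and the extension to all times, which the paper only asserts.
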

\begin{proof}
\noindent (1)~
The normalized alignment field is a convex average of velocities. The closed convex hull of the initial velocity support is forward invariant, and the velocity diameter is non-increasing. This gives the first two estimates in \eqref{B-5}. To justify conservation of total mass, test the weak equation with a standard family of cutoffs $\chi_R(z)$ satisfying $0\le \chi_R\le1$, $\chi_R\to1$ pointwise, and $|\nabla\chi_R|\lesssim R^{-1}$. Letting $R\to\infty$, we have
\[
\int_{\mathbb R^{2d}} f_t(z)\,\di z
=
\int_{\mathbb R^{2d}} f^{\mathrm{in}}(z)\,\di z.
\]

\noindent (2)~We set
\[
R_0:=\mathcal R_v(f^{\mathrm{in}}),
\quad
M_{x,2}(t):=\int_{\mathbb R^{2d}}|x|^2f_t(z)\,\di z.
\]
Since the initial total mass is finite and positive, we assume
it to one without loss of generality. By mass conservation and the
bound $|v|\le R_0$ on the velocity support, we obtain
\[
\int_{\mathbb R^{2d}}|v|^2f_t(z)\,\di z\le R_0^2.
\]
Using the weak equation with the standard cutoff approximation of the test function $|x|^2$, we obtain
\[
\frac{\di}{\di t}M_{x,2}(t)
=2\int_{\mathbb R^{2d}}x\cdot v\,f_t(z)\,\di z
\le2M_{x,2}(t)^{1/2}R_0, \quad \mbox{a.e.}~t > 0.
\]
Therefore, we have
\[
M_{x,2}(t)^{1/2}\le M_{x,2}(0)^{1/2}+R_0t.
\]
This yields
\begin{equation}\label{B-7}
\int_{\mathbb R^{2d}}(|x|^2+|v|^2)f_t(z)\,\di z
\le\bigl(M_{x,2}(0)^{1/2}+R_0t\bigr)^2+R_0^2.
\end{equation}
The same estimate holds uniformly for the regularized approximations in Proposition~\ref{P2.2}. Thus the estimate \eqref{B-4} follows on every finite interval. Proposition~\ref{P2.2} gives a global weak solution, and the support estimates pass to the limit. This verifies \eqref{B-5} and \eqref{B-6}.
\end{proof}
\begin{remark}\label{R2.2}
		We can use \eqref{B-2} and \eqref{B-5} to see that 
	\[ \Big |\mathcal{D}_{v}(f_t)-\mathcal{D}_{v}(f_s) \Big |\le 2\kappa\mathcal{D}_{v}(f^{\mathrm{in}})|t-s|. \]
	This implies that $\mathcal{D}_{v}(f_t)$ is Lipschitz continuous and therefore, it is differentiable almost everywhere.
\end{remark}

\section{Description of main results}\label{sec:3}
\setcounter{equation}{0}
In this section, we introduce the concept of $\alpha$-moderate communication function, and then we present three main results on the global well-posedness and weak flockings. 
\subsection{Moderate communication weight function} \label{sec:3.1} In this subsection, we present a new class of communication functions. 
\begin{definition}[$\alpha$-moderate communication function]\label{D3.1}
Let $\alpha$ be a nonnegative real number.  Then, a communication function $\phi: {\mathbb R}_+~\to~(0,1]$ is called \emph{$\alpha$-moderate} if $\phi$ satisfies the following structural properties:
	\begin{enumerate}
	\item
	 $\phi$ is bounded and non-increasing:
	\begin{equation} \label{C-1}
	\phi(0)=1 \quad \mbox{and} \quad (\phi(r_1) - \phi(r_2)) (r_1 - r_2) \leq 0, \quad r_1,~r_2 \geq 0.
	 \end{equation}
	\item
 There exists a constant $C_\phi\ge1$ such that
		\begin{equation}\label{C-2}
			M_\phi(s):=\sup_{r \geq 0}\frac{\phi(r)}{\phi(r+s)}
			\le C_\phi(1+s)^\alpha,
			\quad s\ge0.
		\end{equation}
		\item 
		There exists a fixed-scale comparability constant $d_\phi$:
		\begin{equation}\label{C-3}
			d_\phi:=\inf_{r\ge0}\frac{\phi(3r/2)}{\phi(r)}>0.
		\end{equation}
		\item 
		There exists a nonnegative constant $L_\phi\ge0$ such that
		\begin{equation}\label{C-4}
			|\log\phi(r)-\log\phi(s)|\le L_\phi|r-s|,
			\quad r,s\ge0.
		\end{equation}
	\end{enumerate}
\end{definition}
Next, we provide several comments on the $\alpha$-moderate communication function and the roles of these properties.
\begin{remark}\label{R3.1}
Note that the CS communication function $\eqref{A-2}_1$  is $\beta$-moderate. Recall that 
\begin{equation} \label{C-5}
\phi(r) :=\frac{1}{(1+r)^{\beta}}, \quad  r \geq 0, \quad \beta \geq 0.
\end{equation}
To see this, we take 
\[
	\alpha=\beta,
	\quad C_\phi=1,
	\quad d_\phi=(2/3)^\beta,
	\quad L_\phi=\beta.
	\]
\noindent (i)~ The explicit ansatz \eqref{C-5} yields the desired estimate \eqref{C-1}. \newline

\noindent (ii)~Since
\[
1+r+s\le(1+r)(1+s),
\]
we have
\[
\frac{\phi(r)}{\phi(r+s)}
=\left(\frac{1+r+s}{1+r}\right)^\beta
\le(1+s)^\beta.
\]
This verifies  \eqref{C-2} with $C_\phi=1$ and $\alpha=\beta$. \newline

\noindent (iii)~Again, we use $1+r \ge(2/3)(1+3r/2)$ to see
\[
\frac{\phi(3r/2)}{\phi(r)}
=\left(\frac{1+r}{1+3r/2}\right)^\beta
\ge(2/3)^\beta.
\]
\noindent (iv)~We use  
\[ 
\log \phi(r) = -\beta \log (1 + r)
\]
to see
\[
\left|\frac{\di}{\di r}\log\phi(r)\right|
=\frac{\beta}{1+r}\le\beta.
\]
This gives \eqref{C-4}.
\end{remark}
\begin{remark}\label{R3.2}
The relations in Definition~\ref{D3.1} play different roles in the analysis. More precisely, we comment on the relations in Definition \ref{D3.1} one by one. 
\begin{enumerate}
\item
The relations in \eqref{C-1} provide the basic positivity and monotonicity required for the normalized MT interaction to be well defined and for spatial distance estimates to be converted into lower bounds for the communication weight.
\vspace{0.1cm}
\item
The relations \eqref{C-2} control the deterioration of the communication strength under spatial translations by at most an algebraic factor; it is the main ingredient in deriving effective-region lower bounds and in estimating normalized averages of unbounded velocity observables by finite spatial-velocity moments. 
\vspace{0.1cm}
\item
The relation \eqref{C-3} gives a uniform comparison of the communication weight at two comparable spatial scales and prevents the normalized interaction from degenerating in the far-field part of the effective-region argument. 
\vspace{0.1cm}
\item
The relation \eqref{C-4} provides quantitative stability of the normalized kernel under bounded perturbations of the particle configuration and is used essentially in the direct fixed-point construction and uniqueness argument for the fully noncompact velocity regime.
\end{enumerate}
\end{remark}
\subsection{Global well-posedness} \label{sec:3.2}
In this subsection, we present the global Lagrangian well-posedness of \eqref{A-3}. 
\begin{proposition} 
\label{P3.1}
Suppose that system parameters, communication weight function and initial datum $\mu_0\in\mathcal P(\mathbb R^{2d})$ satisfy 
\begin{equation} \label{C-6}
 \kappa > 0, \quad  \alpha \in [0, 1], \quad  \phi~:~\mbox{$\alpha$-moderate}, \quad \int_{\mathbb R^{2d}} |z|^2\,\mu_0(\di z)<\infty.
\end{equation}
Then, the following assertions hold.
\begin{enumerate}
\item
The KMT equation \eqref{A-3} admits a unique global Lagrangian weak solution, in the class generated by remainders
\[
U\in C_{\rm loc}\bigl([0,\infty);L^\infty(\mu_0;\mathbb R^d)\bigr),
\]
of the form
\begin{equation} \label{C-7}
\mu_t=(X(t,\cdot),V(t,\cdot))_{\#}\mu_0.
\end{equation}
Moreover, for every $T>0$,
\begin{equation}\label{C-8}
\sup_{0\le t\le T}\int_{\mathbb R^{2d}}
\Bigl(|X(t,z)|^2+|V(t,z)|^2\Bigr)\,\mu_0(\di z)<\infty,
\end{equation} 
\item
If initial measure $\mu_0$ satisfies 
\[
\mu_0(\di z)=f^{\mathrm{in}}(z)\,\di z,
\quad
f^{\mathrm{in}} \in(L^1\cap L_+^\infty)(\mathbb R^{2d}),
\]
then $\mu_t$ remains absolutely continuous with respect to $\di z$ and its density $f_t$ is a weak solution in the sense of Definition~\ref{D2.1}.
\end{enumerate}
\end{proposition}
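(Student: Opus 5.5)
We will construct the solution by a fixed-point argument on the interaction remainder $U$ appearing in \eqref{A-5}. This decomposition is designed so that every object that must be controlled in $L^\infty(\mu_0)$ is bounded, while the unbounded part $e^{-\kappa t}q_0(z)$ is explicit.

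\textbf{Setting up the map.} Fix $T>0$ and work in the Banach space $\mathcal B_T:=C([0,T];L^\infty(\mu_0;\R^d))$. Given $U\in\mathcal B_T$, define
\[
V^U(t,z):=m_0+e^{-\kappa t}q_0(z)+U(t,z),\qquad X^U(t,z):=x+\int_0^t V^U(s,z)\,\di s.
\]
We then set $\mu^U_t:=(X^U(t,\cdot),V^U(t,\cdot))_\#\mu_0$ and
\[
\Gamma[U](t,z):=\kappa\int_0^te^{-\kappa(t-s)}\Bigl(\bar v_{\mu^U_s}(X^U(s,z))-m_0\Bigr)\di s .
\]
A fixed point $U=\Gamma[U]$ is exactly a Lagrangian solution of the form \eqref{C-7}, since $(X^U,V^U)$ then solves \eqref{A-4}.

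\textbf{Well-definedness of $\Gamma$.} The key estimate is that $\bar v_{\mu_s^U}(y)-m_0$ is bounded uniformly in $y$. Write it as the ratio
\[
\frac{\int\phi(|y-X^U(s,z_\star)|)\bigl(e^{-\kappa s}q_0(z_\star)+U(s,z_\star)\bigr)\mu_0(\di z_\star)}{\int\phi(|y-X^U(s,z_\star)|)\mu_0(\di z_\star)}.
\]
The $U$-part is bounded by $\|U(s)\|_\infty$ because the kernel is normalized. For the $q_0$-part:
\begin{itemize}
\item[(i)] Choose a reference label set $B$ of $\mu_0$-mass at least $1/2$ on which $|X^U(s,z_\star)-X^U(s,z_0)|\le R_s$, with $R_s$ controlled by second moments via Chebyshev.
\item[(ii)] By \eqref{C-2}, $\phi(|y-X^U(s,z_\star)|)\le C_\phi(1+|X^U(s,z_\star)-X^U(s,z_0)|)^\alpha\,\phi(|y-X^U(s,z_0)|)$. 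By monotonicity, the denominator is at least $\tfrac12 C_\phi^{-1}(1+R_s)^{-\alpha}\phi(|y-X^U(s,z_0)|)$.
\item[(iii)] The common factor $\phi(|y-X^U(s,z_0)|)$ cancels, leaving
\[
\lesssim (1+R_s)^\alpha\int(1+|X^U(s,z_\star)-X^U(s,z_0)|)^\alpha|q_0(z_\star)|\,\mu_0(\di z_\star).
\]
\end{itemize}
Since $\alpha\le1$ and $|X^U(s,z_\star)|\le|x_\star|+s|v_\star|+s(|m_0|+\|U\|_\infty)$, this integral is finite by Cauchy--Schwarz and the second moment in \eqref{C-6}. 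This is where $\alpha\in[0,1]$ and the $|z|^2$ moment enter. Continuity in $t$ of $\Gamma[U]$ in $L^\infty$ is immediate from the Duhamel form.

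\textbf{Contraction (main obstacle).} We need Lipschitz dependence of $\Gamma$ on $U$. For $U_1,U_2$ we have $|X^{U_1}-X^{U_2}|(s,z)\le s\|U_1-U_2\|_{\mathcal B_s}$ uniformly in $z$; this uniform-in-label control is the point of separating the tail. By \eqref{C-4}, each weight changes multiplicatively by at most $e^{L_\phi\cdot 2s\|U_1-U_2\|}$, uniformly in $z_\star$ and $y$. Hence the normalized kernels differ by a relative factor $O(L_\phi s\|U_1-U_2\|)$. The tail integrals are controlled by the bound in the previous step, and the $U$-difference term is bounded directly. This gives
\[
\|\Gamma[U_1]-\Gamma[U_2]\|_{\mathcal B_t}\le C(T,\Lambda)\int_0^t\|U_1-U_2\|_{\mathcal B_s}\di s
\]
on the ball $\{\|U\|_{\mathcal B_T}\le\Lambda\}$. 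The obstacle is closing the a priori bound: the estimate of the previous step gives $\|\Gamma[U](t)\|_\infty\le\int_0^t\kappa e^{-\kappa(t-s)}(\|U(s)\|+C(1+s+s\|U(s)\|)^{2\alpha})\di s$, which grows at most linearly in $\|U\|$ when $\alpha\le1/2$. For $\alpha\in(1/2,1]$ I would first try to keep the two factors separate, $(1+R_s)^\alpha$ and the $\alpha$-power inside the integral, and exploit that for the $U$-contribution the relative displacement of trajectories cancels. In that case one has $|X^U(s,z_\star)-X^U(s,z_0)|\le|x_\star-x_0|+|v_\star-v_0|+2s\|U\|$, and I expect only a time-dependent Gronwall-type growth. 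Failing that, I would work on a short interval and iterate, using that the bound depends only on quantities propagated linearly.

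Banach's fixed point theorem with a weighted norm $\sup_te^{-\gamma t}\|\cdot\|$, or iteration over short intervals, then gives a unique global $U$. The moment bound \eqref{C-8} follows from the explicit representation. For (2), the flow $z\mapsto(X,V)$ is the flow of an ODE field that is locally Lipschitz in $(x,v)$ by \eqref{C-4}, and the field has $v$-divergence $-d\kappa$. Hence $f_t=e^{d\kappa t}f^{\rm in}\circ\Phi_t^{-1}$ is in $L^1\cap L^\infty_{\rm loc}$, and testing \eqref{B-3} with $\zeta(t,\Phi_t(z))$ yields Definition~\ref{D2.1}.
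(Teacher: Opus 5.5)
Your outline has a genuine gap in the global part of assertion (1). For $\alpha\in(1/2,1]$, which is part of the statement, nothing in it rules out finite-time blow-up of $M(t):=\|U(t)\|_{L^\infty(\mu_0)}$.

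\textbf{Where the gap comes from.} The source is your kernel estimate. You pay $C_\phi(1+R_s)^\alpha$ to bound the denominator from below. Separately, you pay $C_\phi(1+|X^U(s,z_\star)-X^U(s,z_0)|)^\alpha$ in the numerator. So your bound on the normalized tail average grows like the spatial spread to the power $2\alpha$. The spread is itself bounded only by $\mathcal X_2(0)+\mathcal V_2(0)/\kappa+2\int_0^t M(s)\,\di s$. You therefore end up with
\[
M(t)\lesssim\int_0^t e^{-\kappa s}H(s)^{2\alpha}\,\di s,\qquad H(t):=1+\mathcal X_2(0)+\mathcal V_2(0)/\kappa+2\int_0^t M(s)\,\di s .
\]
For $2\alpha>1$ this superlinear Volterra inequality is compatible with finite-time blow-up for large data. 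The factor $e^{-\kappa s}$ is essentially constant near a finite blow-up time, so it does not help.

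\textbf{Why your fallbacks do not close it.} Keeping the two factors separate does not lower the total exponent $2\alpha$. Iterating on short intervals does not help either. The admissible step length in the contraction argument shrinks with $M$ and with the spread, so without an a priori bound the steps can accumulate at a finite time. The spread is not ``propagated linearly'' independently of $U$; it is driven by $\int_0^t M$. As written, your argument gives global existence only for $\alpha\le 1/2$.

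\textbf{The missing ingredient.} You need a normalized $L^2$--$L^\infty$ bound with a single $\alpha$-power; this is the paper's Lemma~\ref{L4.2}.
\begin{enumerate}
\item Do not apply \eqref{C-2} to the denominator. Let $\bar X$ be the $\mu_0$-mean of $X$ and $R=1+\mathfrak X_2[X]$. Chebyshev gives a set of mass at least $1/2$ on which $|X(z_\star)-\bar X|\le R$. Monotonicity alone then yields $\mathcal Z_X(z)\ge\frac12\phi(|X(z)-\bar X|+R)$.
\item Set $a=|X(z)-X(z_\star)|$ and $s=|X(z_\star)-\bar X|$. Since $|X(z)-\bar X|\le a+s$, a single application of \eqref{C-2} gives $k_X(z,z_\star)\le 2\phi(a)/\phi(a+s+R)\le 2C_\phi(1+s+R)^\alpha$.
\item Cauchy--Schwarz and Jensen (using $2\alpha\le 2$) then give $\|\mathscr T_Xq_0\|_{L^\infty(\mu_0)}\le 2^{\alpha+1}C_\phi\|q_0\|_{L^2(\mu_0)}(1+\mathfrak X_2[X])^\alpha$. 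This is at most linear in the spread for every $\alpha\le 1$.
\end{enumerate}
Your reference-label version works just as well with two changes. Choose $z_0$ as in Lemma~\ref{L5.1}. Then compare $\phi(|y-X^U(s,z_\star)|)$ directly with $\phi\bigl(|y-X^U(s,z_\star)|+|X^U(s,z_\star)-X^U(s,z_0)|+R_s\bigr)$, instead of using \eqref{C-2} twice.

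\textbf{How it closes.} Combine this bound with $\|\mathscr T_XU\|_{L^\infty}\le\|U\|_{L^\infty}$ and the Duhamel formula. You get $M(t)\le C\int_0^t e^{-\kappa s}H(s)^\alpha\,\di s$ with the same $H$ as above. Since $H\ge 1$ gives $H^\alpha\le H$, the inequality is linear. Gr\"onwall then gives a finite bound on every $[0,T]$ (Lemma~\ref{L4.5}), hence $T_{\max}=\infty$.

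The rest of your outline follows the paper's route: the fixed point on $U$, the log-Lipschitz stability of the normalized kernel, uniqueness, and the Liouville argument for (2).
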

\begin{proof}
	Since the proof is very lengthy, we leave its detailed proof in Section \ref{sec:4}. However, for the readers' convenience, we sketch a proof  strategy as follows. 
	\begin{itemize}
		\item
		\textbf{Step A} (Closed equation for the bounded remainder):~
		We rewrite the characteristic system as a closed integral equation for
		the interaction-generated remainder $U$, after we separate the
		pointwise-unbounded initial velocity tail (see \eqref{D-9}).
		\vspace{.1cm}
		\item
		\textbf{Step B} (Normalized $L^2$--$L^\infty$ estimate):~
		Although $q_0$ itself may be unbounded, its normalized MT
		average admits an $L^\infty$ bound in terms of its $L^2$ norm and the
		pairwise spatial second moment (see Lemma \ref{L4.2} and Lemma \ref{L4.3}).
			\vspace{.1cm}
		\item
		\textbf{Step C} (Stability of the normalized averaging operator):~
		The logarithmic Lipschitz property of $\phi$ yields a quantitative
		stability estimate for the normalized averaging operator under bounded
		perturbations of the configuration. The unbounded initial tail cancels
		when two candidate configurations are compared, so the corresponding
		configuration difference remains bounded (See Lemma \ref{L4.4}).
			\vspace{.1cm}
		\item
		\textbf{Step D} (Finite-time bounds and global continuation):~
		A finite-time $L^\infty$ estimate for $U$, together with the induced
		control of the pairwise spatial second moment, rules out finite-time
		blow-up and permits the local solution to be continued globally.
	\end{itemize}
\end{proof}
\subsection{Weak flocking} \label{sec:3.3}
In this subsection, we present the weak support flocking and weak moment flocking of order $p$ without proofs, respectively.
\begin{theorem}[Exponential weak support flocking]
\label{T3.1}
Suppose that parameters, communication function and initial datum satisfy 
\[ \kappa > 0, \quad \alpha \in [0, 1], \quad \phi:~\mbox{$\alpha$-moderate},  \quad p \in [2, \infty), \quad \mathcal X_p(0)<\infty, \quad\mathcal D_v(0)<\infty, \]
and let $\mu_t=(X(t,\cdot),V(t,\cdot))_{\#}\mu_0$ be the global Lagrangian weak solution to \eqref{A-3} furnished by Proposition~\ref{P3.1}. Then  there exist positive constants $C>0$ and $\lambda>0$, depending only
on $\phi,~\kappa$ and $\mu_0$ such that
\begin{equation}\label{C-9}
	\sup_{t\ge0}\mathcal X_p(t)<\infty
	\quad \mbox{and} \quad 
	\mathcal D_v(t)\le Ce^{-\lambda t},
	\qquad t\ge0.
\end{equation}
\end{theorem}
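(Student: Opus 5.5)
The plan is to derive the closed system of differential inequalities announced in the introduction,
\[
D^+\mathcal X_p(t)\le \mathcal D_v(t),\qquad
D^+\mathcal D_v(t)\le -\frac{\kappa}{2C_\phi}\min\{6^{-\alpha},d_\phi\}\bigl(1+\mathcal X_p(t)\bigr)^{-\alpha}\mathcal D_v(t),
\]
and then close it with a nonlinear Lyapunov functional in the spirit of \cite{flocking4}.

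\textbf{Spatial inequality.} The first inequality is elementary. Since $\partial_t(X(t,z)-X(t,z_\star))=V(t,z)-V(t,z_\star)$, we have $|V(t,z)-V(t,z_\star)|\le\mathcal D_v(t)$ for $\mu_0\otimes\mu_0$-a.e. pair. Minkowski's inequality in $L^p(\mu_0\otimes\mu_0)$ then gives $\mathcal X_p(t)\le\mathcal X_p(s)+\int_s^t\mathcal D_v$. Finiteness and Lipschitz continuity of $\mathcal D_v$ follow from the convex-average structure, exactly as in Proposition~\ref{P2.3} and Remark~\ref{R2.2}.

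\textbf{Velocity contraction.} The second inequality is the core, and I expect it to be the main obstacle. Write $\dot V(t,z)=\kappa(\mathscr T_{X(t)}V(t)(z)-V(t,z))$. Here $\mathscr T_{X(t)}$ is the normalized averaging with probability weights
\[
w_z(\di z_\star)\propto\phi(|X(t,z)-X(t,z_\star)|)\,\mu_0(\di z_\star).
\]
For two labels $z,z'$ and any unit vector $e$, one has
\[
\frac{\di}{\di t}\,e\cdot(V(z)-V(z'))=\kappa\,e\cdot\bigl(\mathscr T V(z)-\mathscr T V(z')\bigr)-\kappa\,e\cdot(V(z)-V(z')).
\]
The Dobrushin bound gives $|\mathscr T V(z)-\mathscr T V(z')|\le(1-\eta)\mathcal D_v$, where $\eta$ is a lower bound on the overlap $\inf_{z,z'}\int\min(w_z,w_{z'})$. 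Evaluating at near-maximizing pairs, after a standard approximation of the essential supremum, yields $D^+\mathcal D_v\le-\kappa\eta\mathcal D_v$.

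\textbf{Overlap lower bound.} Obtaining $\eta\gtrsim C_\phi^{-1}\min\{6^{-\alpha},d_\phi\}(1+\mathcal X_p)^{-\alpha}$ uniformly in the observation point $z$ is the delicate part, since the spatial support is unbounded. The plan proceeds in three steps.
\begin{enumerate}
\item Use Chebyshev applied to $\mathcal X_p$ to find a time-dependent center $c(t)$, namely some $X(t,z_0)$ for which $\int|X(t,z_0)-X(t,z_\star)|^p\,\mu_0(\di z_\star)\le\mathcal X_p^p$. The ball $B_t$ of radius $r_t=2^{1/p}\mathcal X_p(t)$ around $c(t)$ then carries mass at least $1/2$.
\item For an observer at distance $\rho=|X(t,z)-c(t)|$, normalize by the total weight. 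Using monotonicity, the weight on $B_t$ is at least $\tfrac12\phi(\rho+r_t)$. The total weight splits into the part near the observer, bounded by $\phi(\cdot)\le$ its value on the nearest point, and the remainder. Comparing via \eqref{C-2}, $\phi(\rho-\text{dist})/\phi(\rho+r_t)\le C_\phi(1+2r_t)^\alpha$.
\item Split into the cases $\rho\le 2r_t$ (near field) and $\rho>2r_t$ (far field). In the far field, every point of $B_t$ lies within distance $\tfrac32\rho$ of the observer, while at least half the mass lies within $\rho$-scale comparability. The fixed-scale constant $d_\phi$ from \eqref{C-3} then handles it, giving a factor $\min\{6^{-\alpha},d_\phi\}$.
\end{enumerate}
Both cases give $w_z(B_t)\ge \frac{1}{2C_\phi}\min\{6^{-\alpha},d_\phi\}(1+\mathcal X_p)^{-\alpha}$, up to the $2^{1/p}$ constants, which are absorbed since $p\ge2$. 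The overlap is then bounded below via the common mass on $B_t$ together with the comparable density ratio of $\phi$ across $B_t$. If the pointwise-minimum formulation is awkward, I would instead use the coupling form of the Dobrushin coefficient: send the common fraction $\eta$ of both measures to the same distribution on $B_t$.

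\textbf{Closing the system.} Set $\mathcal L(t):=\mathcal D_v(t)+c_0\int_0^{\mathcal X_p(t)}(1+r)^{-\alpha}\,\di r$ with $c_0=\frac{\kappa}{2C_\phi}\min\{6^{-\alpha},d_\phi\}$. The two inequalities give $D^+\mathcal L\le0$, so
\[
\int_0^{\mathcal X_p(t)}(1+r)^{-\alpha}\,\di r\le \mathcal D_v(0)/c_0+\int_0^{\mathcal X_p(0)}(1+r)^{-\alpha}\,\di r.
\]
Because $\alpha\le1$, the integral $\int_0^\infty(1+r)^{-\alpha}\,\di r$ diverges, which forces $\sup_t\mathcal X_p(t)\le X^\infty<\infty$. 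Substituting this bound back in gives $D^+\mathcal D_v\le-c_0(1+X^\infty)^{-\alpha}\mathcal D_v$. Grönwall's inequality for Dini derivatives then yields $\mathcal D_v(t)\le\mathcal D_v(0)e^{-\lambda t}$ with $\lambda=c_0(1+X^\infty)^{-\alpha}$, which is \eqref{C-9}.
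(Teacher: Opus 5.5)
Your overall architecture (a half-mass center, the near/far-field split, a Dobrushin contraction, the Dini-derivative system, and the Lyapunov functional $\mathcal D_v+c_0\int_0^{\mathcal X_p}(1+r)^{-\alpha}\,\di r$) matches the paper's, and your spatial inequality and Lyapunov closure are fine. The gap is in Step 2 of your overlap bound: the upper bound for the normalization $\mathcal Z_X(z)=\int\phi(|X(z)-X(z_\star)|)\,\mu_0(\di z_\star)$ when the observer is in the far field.

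\textbf{Why Step 2 fails.} For an observer at distance $\rho\gg r_t$, the weight carried by $B_t$ is only of order $\phi(\rho)$, which may tend to zero. Labels whose positions lie close to $X(t,z)$, however, contribute essentially their full $\mu_0$-mass to $\mathcal Z_X(z)$. Bounding that part by ``$\phi$ at the nearest point'' gives only $\le 1$. The half-mass property says only that this mass is at most $1/2$. The comparison via \eqref{C-2} with a shift of order $r_t$ controls only labels at distance at least $\rho-O(r_t)$ from the observer. Half the mass could sit next to a far observer, giving $w_z(B_t)\approx\phi(\rho)$, so you cannot get $w_z(B_t)\gtrsim(1+\mathcal X_p)^{-\alpha}$ uniformly in $\rho$ this way.

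\textbf{The missing idea.} You must use the \emph{moment} property of the center (the first estimate in Lemma~\ref{L5.1}), $\int|X(z_\star)-X(z_c)|^p\,\mu_0(\di z_\star)\le\mathcal X_p^p$, and not just the half-mass ball. Set $s(z_\star):=|X(z_\star)-X(z_c)|$. Then $\rho\le|X(z)-X(z_\star)|+s(z_\star)$, so monotonicity and \eqref{C-2} give, for every $z_\star$,
$\phi(|X(z)-X(z_\star)|)\le C_\phi(1+s(z_\star))^\alpha\phi(\rho)$.
Integrating and using Jensen's inequality ($\alpha\le 1$) yields $\mathcal Z_X(z)\le C_\phi(1+\mathcal X_p)^\alpha\phi(\rho)$. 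Combined with $\phi(|X(z)-X(z_\star)|)\ge\phi(\tfrac32\rho)\ge d_\phi\phi(\rho)$ on $B_t$, this gives the far-field estimate.

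\textbf{The overlap step.} The Dobrushin step needs a \emph{pointwise} lower bound on the density $k_X(z,z_\star)$ for $z_\star\in B_t$, uniform in $z$. Then every kernel dominates the single measure $c\,(1+\mathcal X_p)^{-\alpha}\mathbbm 1_{B_t}\mu_0$; this is Lemma~\ref{L5.2}. In the near field it follows directly from $\mathcal Z_X\le1$ and monotonicity of $\phi$. A lower bound on the mass $w_z(B_t)$ alone does not give overlap, since two kernels can each charge $B_t$ and still be mutually singular there. Your proposed repair, recovering a density bound from the mass bound via the oscillation ratio $\phi(\rho-r_t)/\phi(\rho+r_t)\le C_\phi(1+2r_t)^\alpha$, costs an extra factor $(1+\mathcal X_p)^{-\alpha}$. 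With coefficient $(1+\mathcal X_p)^{-2\alpha}$, the integral $\int_0^\infty(1+r)^{-2\alpha}\,\di r$ is finite for $\alpha>1/2$, so your Lyapunov argument would no longer bound $\mathcal X_p$ without a smallness condition. The far-field computation above is already pointwise in $z_\star$, so this loss is avoidable.

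Once the pointwise bound is in hand, the rest of your plan goes through. The velocity contraction is cleaner via the Duhamel formula on $[s,t]$ taken over all pairs, followed by the essential supremum, as in Lemma~\ref{L5.3}, rather than via near-maximizing pairs.
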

\begin{proof}
Since the proof is also very lengthy, we leave the detailed proof in Section~\ref{sec:5}. For the reader's
	convenience, we summarize the main argument as follows.
\begin{itemize}
\item
		\textbf{Step A} (Construction of a time-varying effective-region):~
		Since the spatial support may be unbounded, the communication function
	 need not admit a uniform positive lower bound over the whole spatial support. Therefore, 
		we construct, at each time $t$, an effective spatial region
		carrying a fixed positive fraction of the total mass. The size of this
		region is controlled in terms of the pairwise spatial fluctuation
		$\mathcal X_p(t)$. See Lemma \ref{L5.1}.
		\vspace{0.1cm}
		\item
		\textbf{Step B} (Common-mass lower bound for the normalized kernels):~
		We use the relations for $\phi$  in
		Definition~\ref{D3.1} to show that every normalized
		MT interaction kernel assigns a common positive amount of
		weight to the effective region. More precisely, the common-mass
		coefficient admits a lower bound of the form
		\[
		\eta(t)
		\ge \frac{1}{2C_{\phi}}
\min\bigl\{6^{-\alpha},d_{\phi}\bigr\}
		\bigl(1+\mathcal X_p(t)\bigr)^{-\alpha}.
		\]
		See Lemma \ref{L5.2}.
		
		\vspace{0.1cm}
		\item
		\textbf{Step C} (Contraction of the velocity-support diameter):~
		The common-mass lower bound yields a Dobrushin-type contraction
		estimate for the normalized averaging operator. Consequently, the
		velocity-support diameter satisfies
		\begin{equation}
		\begin{cases} \label{C-10}
		\displaystyle D^+\mathcal X_p
		\le
		\mathcal D_v, \quad \mbox{a.e.,}~t > 0, \vspace{6pt}\\
		\displaystyle D^+\mathcal D_v
		\le
		- \frac{\kappa}{2C_{\phi}}
\min\bigl\{6^{-\alpha},d_{\phi}\bigr\}
		\bigl(1+\mathcal X_p \bigr)^{-\alpha}
		\mathcal D_v.
		\end{cases}
		\end{equation}
		See Lemma \ref{L5.3} and Lemma \ref{L5.4}.
		
		\vspace{0.1cm}
		\item
		\textbf{Step D} (Lyapunov closure and exponential alignment):~For $\alpha \in [0, 1]$, we use \eqref{C-10} and a nonlinear Lyapunov functional approach to derive 
		\[
		\sup_{t\ge0}\mathcal X_p(t)<\infty.
		\]
		Consequently, the coefficient in $\eqref{C-10}_2$:
		\[
		\frac{\kappa}{2C_{\phi}}
\min\bigl\{6^{-\alpha},d_{\phi}\bigr\}
		\bigl(1+\mathcal X_p(t)\bigr)^{-\alpha}
		\]
		has a strictly positive uniform lower bound. We substitute this bound into the velocity-diameter inequality $\eqref{C-10}_2$ to get 
		\[
		\mathcal D_v(t)
		\le
		Ce^{-\lambda t},
		\qquad t\ge0,
		\]
		for some constants $C,\lambda>0$. Hence the solution exhibits
		exponential weak support flocking. See Lemma \ref{L5.5}.
	\end{itemize}
\end{proof}
Recall
\[
m_0
:=
\int_{\mathbb R^{2d}}v\,\mu_0(\di x,\di v),
\qquad
q_0(z):=v-m_0,
\]
and let $\mu_t$ be the global Lagrangian solution constructed in
Proposition~\ref{P3.1}. Then, it satisfies 
\begin{equation}\label{C-11}
	V(t,z)
	=
	m_0+e^{-\kappa t}q_0(z)+U(t,z).
\end{equation}
We set 
\[
\mathcal D_U(t)
:=
\operatorname*{ess\,sup}_{z,z_{\star} \in {\mathbb R}^{2d}}
|U(t,z)-U(t,z_{\star})|.
\]
\begin{theorem}[Exponential weak moment flocking] \label{T3.2}
Suppose that parameters, communication weight function and initial datum satisfy 
\[ \kappa > 0, \quad \alpha \in [0, 1], \quad \phi:~\mbox{$\alpha$-moderate},  \quad p \in [2, \infty), \quad \mathcal X_p(0)<\infty, \quad \mathcal V_p(0)<\infty, \]
and let $\mu_t$ be the global Lagrangian weak solution furnished by Proposition~\ref{P3.1}. Then, there exist constants
$C>0$ and $\lambda>0$ such that
	\begin{equation}\label{C-12}
		\mathcal D_U(t)\le Ce^{-\lambda t}, \quad \sup_{t\ge0}\mathcal X_p(t)<\infty,
		\quad
		\mathcal V_p(t)\le Ce^{-\lambda t},
		\quad t\ge0.
\end{equation}
\end{theorem}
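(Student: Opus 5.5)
The plan is to work entirely with the decomposition \eqref{C-11}, $V(t,z)=m_0+e^{-\kappa t}q_0(z)+U(t,z)$, and to close a differential-inequality system of the form \eqref{A-8} for the pair $(\mathcal D_U,\mathcal S_U)$. Differentiating \eqref{C-11} gives
\[
\partial_t U(t,z)=-\kappa U(t,z)+\kappa\bigl(\mathscr T_{X(t)}U(t)\bigr)(z)+\kappa e^{-\kappa t}\bigl(\mathscr T_{X(t)}q_0\bigr)(z),
\]
where $\mathscr T_{X(t)}$ is the normalized averaging operator. The $m_0$ terms cancel because $\mathscr T$ preserves constants. The three ingredients are as follows.

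\textbf{(i) Spatial scale.} First, I will show $1+\mathcal X_p(t)\le \mathcal S_U(t)$. Integrating $\dot X=V$ gives
\[
X(t,z)-X(t,z_\star)=x-x_\star+\tfrac{1-e^{-\kappa t}}{\kappa}(v-v_\star)+\int_0^t\bigl(U(s,z)-U(s,z_\star)\bigr)\di s .
\]
Minkowski's inequality in $L^p(\mu_0\otimes\mu_0)$ then bounds $\mathcal X_p(t)$ by $\mathcal X_p(0)+\kappa^{-1}\mathcal V_p(0)+\int_0^t\mathcal D_U\,\di s$.

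\textbf{(ii) Bounded forcing.} Next, I will use the normalized $L^p$--$L^\infty$ estimate (the analogue of Lemma \ref{L4.2}/\ref{L4.3} in $L^p$) to get
\[
\sup_z|\mathscr T_{X(t)}q_0(z)|\lesssim C_\phi(1+\mathcal X_p(t))^{\alpha}\|q_0\|_{L^p(\mu_0)} .
\]
The idea is that, on the effective region of Lemma \ref{L5.1}, the denominator is bounded below by a multiple of $(1+\mathcal X_p)^{-\alpha}\phi(\cdot)$ via \eqref{C-2}. The numerator is then handled by H\"older's inequality together with the comparison $\phi(|x-y|)\le M_\phi(\cdot)\phi(\cdot)$. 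Since $\|q_0\|_{L^p}\le\mathcal V_p(0)$, this produces the term $be^{-\kappa t}\mathcal S_U^\alpha$.

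\textbf{(iii) Contraction.} Finally, I will reuse the common-mass bound of Lemma \ref{L5.2}, whose lower bound $\eta(t)\gtrsim(1+\mathcal X_p)^{-\alpha}\ge \mathcal S_U^{-\alpha}$ holds regardless of velocities since it depends only on spatial positions. The Dobrushin estimate of Lemma \ref{L5.3} then gives $\operatorname{osc}(\mathscr T U)\le(1-\eta)\mathcal D_U$. Comparing two labels $z,z_\star$ in the equation above yields
\[
D^+\mathcal D_U\le-\kappa\eta\,\mathcal D_U+2\kappa e^{-\kappa t}\sup|\mathscr T q_0| ,
\]
and hence \eqref{A-8} with $\mathcal D_U(0)=0$. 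I will justify the Dini derivative of the ess sup exactly as in Lemma \ref{L5.4}, using Lipschitz continuity in time of $U$ in $L^\infty(\mu_0)$, which follows from the construction of Proposition~\ref{P3.1}.

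The main obstacle is closing \eqref{A-8}, because the forcing grows with $\mathcal S_U$ while the damping degrades with it. I will use a bootstrap. Set $\mathcal S^*:=2\mathcal S_U(0)$ (or a larger constant to be chosen) and $T^*:=\sup\{t:\mathcal S_U\le \mathcal S^*\text{ on }[0,t]\}$. On $[0,T^*)$ the system is linear with constant rates:
\[
D^+\mathcal D_U\le -a\mathcal S^{*-\alpha}\mathcal D_U+b\mathcal S^{*\alpha}e^{-\kappa t}.
\]
Gr\"onwall's inequality gives $\mathcal D_U(t)\le b\mathcal S^{*\alpha}\,t\,e^{-\lambda t}$ with $\lambda=\min\{a\mathcal S^{*-\alpha},\kappa\}$, up to the usual modification when the two rates coincide. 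Consequently $\int_0^\infty\mathcal D_U\lesssim b\mathcal S^{*\alpha}\lambda^{-2}\lesssim b\,a^{-2}\mathcal S^{*3\alpha}$.

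For $\alpha\in[0,1]$ this integral need not be smaller than $\mathcal S^*/2$ for all data. The plan is therefore to choose $\mathcal S^*$ via a nonlinear Lyapunov functional in the spirit of Lemma \ref{L5.5}. Take
\[
\mathcal L(t):=\mathcal D_U(t)+a\int_{\mathcal S_U(0)}^{\mathcal S_U(t)}r^{-\alpha}\,\di r .
\]
Then
\[
D^+\mathcal L\le be^{-\kappa t}\mathcal S_U^\alpha .
\]
Because $\mathcal S_U$ grows at most as fast as its own forcing allows, a Gr\"onwall argument on $\mathcal S_U^\alpha\le C(1+\int r^{-\alpha})^{\alpha/(1-\alpha)}$ (treating $\alpha=1$ via the logarithm) keeps $\int_0^\infty be^{-\kappa t}\mathcal S_U^\alpha$ finite. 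This is the step I expect to be delicate, and I may need a smallness condition or the exponential factor $e^{-\kappa t}$ to dominate the polynomial growth. Once this is done, $\mathcal S_U$ is bounded and hence $\sup_t\mathcal X_p<\infty$ by (i). The damping rate then has a uniform positive lower bound, giving $\mathcal D_U\le Ce^{-\lambda t}$. Finally,
\[
\mathcal V_p(t)\le e^{-\kappa t}\mathcal V_p(0)+\mathcal D_U(t)\le Ce^{-\lambda t}
\]
completes \eqref{C-12}.
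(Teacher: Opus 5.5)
\textbf{There is a genuine gap, in the closure step.} Your ingredients (i)--(iii) match the paper's Lemmas 6.3, 6.2 and 6.4. Your $L^p$ version of the tail bound in place of the $L^2$ bound from Lemma 4.2 is harmless, since $\|q_0\|_{L^p(\mu_0)}\le\mathcal V_p(0)$. So the reduction to the scalar system \eqref{A-8} is fine.

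The problem is closing \eqref{A-8}, and your Lyapunov/Gr\"onwall route does not close it for $\alpha>1/2$. From $a\int_{\mathcal S_0}^{\mathcal S_U}r^{-\alpha}\,\di r\le\mathcal L$ you get $\mathcal S_U^\alpha\le\bigl(\mathcal S_0^{1-\alpha}+(1-\alpha)\mathcal L/a\bigr)^{\alpha/(1-\alpha)}$. For $\alpha\in(1/2,1)$ the exponent exceeds one, and at $\alpha=1$ the dependence is $\mathcal S_0e^{\mathcal L/a}$. Hence $\mathcal L(t)\le\int_0^t be^{-\kappa s}\mathcal S_U^\alpha\,\di s$ is a superlinear Bihari inequality. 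Its comparison solution blows up in finite time unless $b/\kappa$ is small; at $\alpha=1$ the condition is $b\mathcal S_0<a\kappa$. Your fixed-$\mathcal S^*$ bootstrap likewise closes only for small $\alpha$ or small data. The theorem assumes no smallness, so as written your argument covers only $\alpha\le1/2$.

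\textbf{The missing idea} is an a priori bound that ignores the damping altogether (Step A of Appendix A).
\begin{enumerate}
\item Drop $-a\mathcal S_U^{-\alpha}\mathcal D_U$ and use $\mathcal S_U^\alpha\le\mathcal S_U$. With $\mathcal D_U(0)=0$ this gives $\mathcal D_U(t)\le b\,\mathcal H(t)$, where $\mathcal H(t):=\int_0^te^{-\kappa s}\mathcal S_U(s)\,\di s$.
\item Since $\mathcal S_U(t)=\mathcal S_0+\int_0^t\mathcal D_U$, Fubini gives $\mathcal H(t)\le\mathcal S_0/\kappa+\frac{b}{\kappa}\int_0^te^{-\kappa r}\mathcal H(r)\,\di r$.
\item Gr\"onwall then yields $\mathcal H\le(\mathcal S_0/\kappa)e^{b/\kappa^2}$. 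So $\mathcal D_U$ is uniformly bounded and $\mathcal S_U(t)\le\mathcal S_{\rm lin}(1+t)$ grows at most linearly.
\end{enumerate}
This is exactly the ``$e^{-\kappa t}$ dominates polynomial growth'' you hoped for, but it must be proved first and independently of $\mathcal L$.

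The paper then feeds this linear bound back into the damping. It gets stretched-exponential decay for $\alpha<1$, and at $\alpha=1$ an extra bootstrap first improves the growth to sublinear. Either way it obtains $\int_0^\infty\mathcal D_U<\infty$.

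Once you have linear growth, your functional $\mathcal L$ actually finishes faster than the paper's route. You get $\mathcal L(t)\le b\,\mathcal S_{\rm lin}^{\alpha}\int_0^\infty e^{-\kappa s}(1+s)^\alpha\,\di s<\infty$. Since $\int^\infty r^{-\alpha}\,\di r=\infty$ for every $\alpha\le1$, $\mathcal S_U$ is bounded, with no separate treatment of the endpoint $\alpha=1$. Your final step then gives the exponential decay of $\mathcal D_U$ and $\mathcal V_p$.
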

\begin{proof}
	The detailed proof can be found in Section~\ref{sec:6}, however for reader's
	convenience, we summarize the main argument in several steps.
	
	\begin{itemize}
		\item
		\textbf{Step A} (Decomposition of the unbounded velocity tail):~
		Using \eqref{C-11}, we write
		\[
		V(t,z)
		=
		m_0+e^{-\kappa t}q_0(z)+U(t,z).
		\]
		The possibly unbounded part of the initial velocity is therefore
		isolated in the explicit exponentially damped term
		$e^{-\kappa t}q_0(z)$, whereas the interaction-generated remainder
		$U$ is bounded on every finite time interval.
		
		\vspace{0.1cm}
		\item
		\textbf{Step B} (Estimate for the normalized average of the
			unbounded tail):~
		Although $q_0$ itself need not belong to $L^\infty(\mu_0)$, the
		algebraic moderation of the communication function and the finite
		$p$-moment ($p\ge2$) assumptions yield
		\[
		\|\mathscr T_{X(t)}q_0\|_{L^\infty(\mu_0)}
		\le
		C\|q_0\|_{L^2(\mu_0)}
		\bigl(1+\mathcal X_p(t)\bigr)^\alpha.
		\]
		Thus the contribution of the unbounded initial velocity tail enters
		the equation for $U$ only through a bounded forcing term. See Lemma \ref{L6.2}.
		
		\vspace{0.1cm}
		\item
		\textbf{Step C} (Common-mass contraction for the remainder):~We apply the time-varying effective-region argument to the
		normalized interaction kernels. This gives a common positive mass
		shared by all kernels and hence a contraction estimate for the
		oscillation
		\[
		\mathcal D_U(t)
		:=
		\operatorname*{ess\,sup}_{z,z_{\star}}
		|U(t,z)-U(t,z_{\star})|.
		\]
		Now, we introduce the auxiliary spatial scale
		\[
		\mathcal S_U(t)
		:=
		1+\mathcal X_p(0)
		+\frac1\kappa\mathcal V_p(0)
		+\int_0^t\mathcal D_U(s)\,\di s,
		\]
		to find 
		\[
		1+\mathcal X_p(t)\le \mathcal S_U(t),
		\qquad
		\mathcal S_U'(t)=\mathcal D_U(t).
		\]See Lemma \ref{L6.3}.
		
		\vspace{0.1cm}
		\item
		\textbf{Step D} (Reduction to a closed scalar differential system):~
		Combining the common-mass contraction with the estimate from
		\textbf{Step~B}, we obtain
		\begin{equation} 
		\begin{cases} \label{C-13}
		\displaystyle	D^+\mathcal D_U(t)
			\le
			-a\mathcal S_U(t)^{-\alpha}\mathcal D_U(t)
			+
			be^{-\kappa t}\mathcal S_U(t)^\alpha, \quad \mbox{a.e.,}~t > 0, \vspace{6pt}\\
		\displaystyle \mathcal S_U'(t)=\mathcal D_U(t),  \vspace{6pt}\\
		\displaystyle (\mathcal{D}_U, {\mathcal S}_U) \Big|_{t=0} = \Big (0, 1+\mathcal X_p(0)
		+\frac1\kappa\mathcal V_p(0) \Big),
		\end{cases}	
		\end{equation}
		for some constants $a,b>0$. Hence the fully noncompact flocking problem is reduced to a closed
		system for $\mathcal D_U$ and $\mathcal S_U$. See Lemma \ref{L6.4}.
		
		\vspace{0.1cm}
		\item
		\textbf{Step E} (Bootstrap of the spatial scale and exponential
			decay of $\mathcal D_U$):~
		For $0\le\alpha<1$, the initial linear growth estimate for
		$\mathcal S_U$ inserted into \eqref{C-13} yields
		stretched-exponential decay of $\mathcal D_U$. At the critical endpoint
		$\alpha=1$, an additional bootstrap first improves the growth of
		$\mathcal S_U$ from linear to sublinear; reinserting this improved bound into
		\eqref{C-13} then yields an integrable decay rate for $\mathcal D_U$.
		In both cases,
		\[
		\mathcal S_U'(t)=\mathcal D_U(t)
		\]
		implies $\sup_{t\ge0}\mathcal S_U(t)<\infty$. Substituting this uniform
		bound back into \eqref{C-13} upgrades the decay to
		\[
		\mathcal D_U(t)\le Ce^{-\lambda t}.
		\]
	 See Lemma \ref{L6.5}.
		
		\vspace{0.1cm}
		\item
		\textbf{Step F} (Exponential decay of the velocity $p$-fluctuation):~It follows from the decomposition \eqref{C-11}:
		\[
		V(t,z)-V(t,z_{\star})
		=
		e^{-\kappa t}
		\bigl(q_0(z)-q_0(z_{\star})\bigr)
		+
		U(t,z)-U(t,z_{\star}).
		\]
Then, we have
		\[
		\mathcal V_p(t)
		\le
		e^{-\kappa t}\mathcal V_p(0)
		+
		\mathcal D_U(t).
		\]
		Using the exponential decay obtained in \textbf{Step~E}, we conclude that
		\[
		\mathcal V_p(t)\le Ce^{-\lambda t},
		\qquad
		\sup_{t\ge0}\mathcal X_p(t)<\infty.
		\]
		Hence the solution exhibits exponential weak moment flocking. See Section \ref{sec:6.3}.
	\end{itemize}

\end{proof}

\begin{remark}\label{R3.3}
	We make several comments on Theorem~\ref{T3.1} and Theorem~\ref{T3.2}.
	\begin{enumerate}
		\item
		The two theorems describe genuinely different flocking regimes.
		When the velocity support is compact, the natural observable is the
		velocity-support diameter $\mathcal D_v$, and the normalized
		MT interaction yields exponential contraction of the
		whole velocity support. When the velocity support is noncompact,
		$\mathcal D_v(t)$ may remain infinite at every finite time, and the
		appropriate observable is instead the pairwise velocity fluctuation
		$\mathcal V_p$. Thus Theorem~\ref{T3.1} gives exponential weak
		support flocking, whereas Theorem~\ref{T3.2} gives exponential weak
		moment flocking.
		
		\vspace{0.1cm}
		\item
		Compared with the classical flocking theory for the KMT model with
		compactly supported initial data \cite{MT,flocking1,HWXMT,HWYMT,W-MT-2026,HJZ2025}, Theorem~\ref{T3.1} and Theorem~\ref{T3.2}
		remove the compactness assumption on the spatial distribution and,
		in the second theorem, also on the velocity distribution. In the
		compact-support setting, the communication weight admits a uniform
		positive lower bound over the whole support, so the alignment
		mechanism can be treated through a fixed global interaction scale.
		In the present noncompact setting, such a uniform lower bound is no
		longer available. The key replacement is a time-dependent effective
		region carrying a fixed positive fraction of the total mass, which
		provides a quantitative common-mass lower bound for the normalized
		interaction kernels. 
		\vspace{0.1cm}
		\item
		The present framework also differs essentially from the weak-flocking
		theory for symmetric Cucker--Smale-type models with noncompact
		support \cite{H-W-2025,H-W-2026,H-W-CS-2026,HaWangXue2025}. In the latter case, conservation of the average velocity
		and the associated dissipative structure provide a distinguished
		reference state and play a central role in controlling spatial and
		velocity fluctuations. For the KMT model, the normalization destroys
		momentum conservation, so no conserved average velocity is available
		to serve as a global reference frame. Our arguments therefore do not
		rely on momentum conservation. Instead, the normalized common-mass
		contraction is combined with the time-varying effective-region
		method, and, in the fully noncompact regime, with the decomposition
		of the velocity into the exponentially damped initial tail and a
		bounded interaction-generated remainder. This provides a different
		mechanism for obtaining weak flocking in the absence of the
		conservation structure available for the Cucker--Smale dynamics.
		\vspace{0.1cm}
		\item 
		For the particular power-law communication function
		\[
		\phi(r)=(1+r)^{-\beta},
		\]
		Definition~\ref{D3.1} holds with $\alpha=\beta$. Therefore,
		Theorem~\ref{T3.1} and Theorem~\ref{T3.2} can be applied to the fat-tail range with
		$0\le\beta\le1.$ In particular, the endpoint $p=\infty$
		corresponds to the compactly supported case \cite{flocking1,MT}, which is also covered by our approach. 
	\end{enumerate}
\end{remark}

\section{Lagrangian weak solution}
\label{sec:4}
\setcounter{equation}{0}
In this section, we provide a proof of Proposition \ref{P3.1} on the global Lagrangian well-posedness of the KMT equation \eqref{A-3} in the fully noncompact regime.

\subsection{Particle trajectories}\label{sec:4.1} In this subsection, we derive a closed system for particle trajectories $(X(t), V(t))$ introduced in Section \ref{sec:2.1}. In this case, the obvious difficulty is that the initial velocity fluctuation
\begin{equation} \label{D-1}
q_0(z):=v-m_0
\end{equation}
is not assumed to be bounded on the velocity support, so a characteristic flow based on a uniformly bounded velocity field is not directly available. The key observation is to separate the unbounded part of the velocity through the exact decomposition
\[
V(t,z)=m_0+e^{-\kappa t}q_0(z)+U(t,z),
\]
where the interaction-generated remainder $U$ will be shown to remain bounded on every finite time interval in the sequel. For a measurable configuration $X:\mathbb R^{2d}\to\mathbb R^d$, we set the normalization factor and kernel:
\[
\mathcal Z_X(z)
:=
\int_{\mathbb R^{2d}}
\phi\bigl(|X(z)-X(z_{\star})|\bigr)\,\mu_0(\di z_{\star}), \quad k_X(z,z_{\star})
:=
\frac{\phi(|X(z)-X(z_{\star})|)}{\mathcal Z_X(z)}.
\]
Then, we define the normalized averaging operator as follows.
\begin{equation} \label{D-2}
\mathscr T_Xh(z) :=\int_{{\mathbb R}^{2d}} k_X(z,z_{\star})h(z_{\star})\,\mu_0(\di z_{\star}).
\end{equation}
Note that it is linear and satisfies
\[
\mathscr T_X1= \int k_X(z,z_{\star}) \,\mu_0(\di z_{\star}) = 1.
\]
Next, we summarize the property of $\mathscr T_X$ and give representation of particle trajectories in the following lemma.
\begin{lemma} \label{L4.1}
Let $(X,V)$ be a candidate Lagrangian trajectory satisfying the characteristic relations, and set
$\mu_t=(X(t,\cdot),V(t,\cdot))_{\#}\mu_0$. Then the following assertions hold.
\begin{enumerate}
\item
The averaging operator $\mathscr T_X$ is a positive operator such that 
\begin{equation} \label{D-3}
\mathscr T_Xm_0=m_0, \quad  \|\mathscr T_Xh\|_{L^\infty(\mu_0)}\le \|h\|_{L^\infty(\mu_0)}. 
\end{equation}
\item
The forward particle trajectory $(X(t,z), V(t,z))$ issued from $z$ at time $0$ satisfies 
\begin{equation} 
\hspace{0.5cm}
\begin{cases} \label{D-4}
\displaystyle X(t,z)=x+m_0t+\frac{1-e^{-\kappa t}}{\kappa}q_0(z)
+\int_0^tU(s,z)\,\di s, \vspace{6pt}\\
\displaystyle V(t,z)
	=m_0+e^{-\kappa t}q_0(z)+U(t,z), \vspace{6pt}\\
\displaystyle U(t,z)
		= \kappa\int_0^t e^{-\kappa(t-s)}
		\mathscr T_{X(s)}U(s,\cdot)(z)\,\di s +\kappa\int_0^t e^{-\kappa(t-s)}e^{-\kappa s}
		(\mathscr T_{X(s)}q_0)(z)\,\di s.
\end{cases}
\end{equation}
\end{enumerate}
\end{lemma}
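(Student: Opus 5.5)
Part (1) follows directly from the structure of the kernel $k_X$. Since $\phi$ takes values in $(0,1]$ by Definition~\ref{D3.1} and $\mu_0$ is a probability measure, we have $\mathcal Z_X(z)\in(0,1]$ for every $z$. Hence $k_X(z,\cdot)\ge0$ is well defined, and $\int k_X(z,z_\star)\,\mu_0(\di z_\star)=1$. Positivity of $\mathscr T_X$ follows at once. Linearity together with $\mathscr T_X1=1$, applied componentwise to the constant vector $m_0$, gives $\mathscr T_Xm_0=m_0$. For the $L^\infty$ bound, we note that $|h(z_\star)|\le\|h\|_{L^\infty(\mu_0)}$ for $\mu_0$-a.e.\ $z_\star$. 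Since $k_X(z,\cdot)\,\mu_0$ is a probability measure absolutely continuous with respect to $\mu_0$, integrating this bound yields $|\mathscr T_Xh(z)|\le\|h\|_{L^\infty(\mu_0)}$ for every $z$.

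For part (2), the first step is to rewrite the alignment field in Lagrangian variables. Using $\mu_s=(X(s,\cdot),V(s,\cdot))_{\#}\mu_0$ and the change-of-variables formula for push-forward measures, we obtain
\[
\bar v_{\mu_s}(X(s,z))=\frac{\int\phi(|X(s,z)-X(s,z_\star)|)V(s,z_\star)\,\mu_0(\di z_\star)}{\mathcal Z_{X(s)}(z)}=\mathscr T_{X(s)}V(s,\cdot)(z).
\]
Justifying this requires $V(s,\cdot)\in L^1(\mu_0)$, which holds by the finite second-moment assumption. We then apply the variation-of-constants formula \eqref{A-5} to $\dot V=\kappa(\mathscr T_{X(t)}V(t,\cdot)-V)$. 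This gives $V=m_0+e^{-\kappa t}q_0+U$, where
\[
U(t,z)=\kappa\int_0^te^{-\kappa(t-s)}\bigl(\mathscr T_{X(s)}V(s,\cdot)(z)-m_0\bigr)\,\di s.
\]

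Next we substitute the decomposition of $V(s,\cdot)$ inside $\mathscr T_{X(s)}$ and use linearity. By part (1), $\mathscr T_{X(s)}m_0=m_0$, so the $m_0$ terms cancel. This leaves
\[
\mathscr T_{X(s)}V(s)-m_0=e^{-\kappa s}\mathscr T_{X(s)}q_0+\mathscr T_{X(s)}U(s),
\]
which is exactly the third relation in \eqref{D-4}. Finally, we integrate $\dot X=V$ from $0$ to $t$, using $\int_0^te^{-\kappa s}\,\di s=(1-e^{-\kappa t})/\kappa$, to obtain the formula for $X(t,z)$.

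The main obstacle is making sure that $\mathscr T_{X(s)}$ applied to the unbounded term $q_0$, and the splitting by linearity, are legitimate. This requires $q_0\in L^1(k_{X(s)}(z,\cdot)\mu_0)$. I would handle it with the bound $k_X\le1/\mathcal Z_X(z)$, which is finite, together with $q_0\in L^2(\mu_0)\subset L^1(\mu_0)$. I would also assume measurability in $s$ of the integrands, which is part of the candidate-trajectory hypothesis. With these in place, all the integrals are finite and the formal computations above are rigorous.
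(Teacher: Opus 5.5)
Your proposal is correct and follows essentially the same route as the paper. For part (1) you use positivity of $\phi$ together with $\mathscr T_X1=1$. For part (2) you use the push-forward identity $\bar v_{\mu_t}(X(t,z))=\mathscr T_{X(t)}V(t,\cdot)(z)$, variation of constants, cancellation of the $m_0$ terms via $\mathscr T_Xm_0=m_0$, and integration of $\dot X=V$. The only difference is that you fold the paper's separate cancellation step into the formula $U=\kappa\int_0^t e^{-\kappa(t-s)}(\mathscr T_{X(s)}V(s,\cdot)-m_0)\,\di s$.

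One small correction: integrability of $V(s,\cdot)$ against $k_{X(s)}(z,\cdot)\mu_0$ does not follow from the second moment of $\mu_0$ alone. It is implicit in the characteristic relations being well defined, or it comes from working in the class of bounded remainders. Either way, that integrability is all your linearity step needs.
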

\begin{proof}
\noindent (1)~Since $\phi>0$ and $\mu_0$ is a probability measure, $\mathcal Z_X(z)$ is strictly positive; hence the operator $\mathscr T_X$ is positive. On the other hand, since we have $\mathscr T_X1=1$, 
\begin{equation}\label{D-5}
\|\mathscr T_Xh\|_{L^\infty(\mu_0)}\le \|h\|_{L^\infty(\mu_0)}
\end{equation}
for every bounded scalar- or vector-valued function $h$. \newline

\noindent (2)~Let $(X(t), V(t))$ be a particle trajectory. \newline

\noindent $\bullet$~(Derivation of $\eqref{D-4}_2$ and $\eqref{D-4}_3$): ~Note that $\mu_t=(X(t),V(t))_{\#}\mu_0$ is a weak solution to \eqref{A-3}, and the relations \eqref{B-1} and  \eqref{B-2} yield
\begin{equation} \label{D-6}
\bar v_{\mu_t}(X(t,z))
=\mathscr T_{X(t)}V(t,\cdot)(z), \quad 
\dot V(t,z)+\kappa V(t,z)
=\kappa\mathscr T_{X(t)}V(t,\cdot)(z).
\end{equation}
On the other hand, we use the variation of constants for $\eqref{D-6}_2$ to get 
\begin{equation} \label{D-7}
V(t,z)
=e^{-\kappa t}v
+\kappa\int_0^t e^{-\kappa(t-s)}
\mathscr T_{X(s)}V(s,\cdot)(z)\,\di s.
\end{equation}
Now, we introduce a new function $U = U(t,z)$ such that 
\begin{equation} \label{D-8}
V(t,z) = m_0 + e^{-\kappa t} q_0(z) + U(t,z).
\end{equation}
Next, we derive a self-consistent equation for $U$. Again, we use  \eqref{D-7}, \eqref{D-8}  and $ v = q_0(z) + m_0$ to see
\begin{align}
\begin{aligned} \label{D-9}
U(t,z) &= V(t,z) - m_0 - e^{-\kappa t} q_0(z) \\
&= e^{-\kappa t}v - (m_0 +  e^{-\kappa t} q_0(z)) + \kappa\int_0^t e^{-\kappa(t-s)}
\mathscr T_{X(s)}V(s,\cdot)(z)\,\di s \\
&= (e^{-\kappa t} - 1) m_0 +  \kappa\int_0^t e^{-\kappa(t-s)}
\mathscr T_{X(s)}V(s,\cdot)(z)\,\di s \\
&=  (e^{-\kappa t} - 1) m_0 + \kappa\int_0^t e^{-\kappa(t-s)} \mathscr T_{X(s)} \Big( m_0+e^{-\kappa s}q_0+U(s,\cdot) \Big)(z) \di s \\
&=  \kappa \int_0^t e^{-\kappa(t-s)}e^{-\kappa s} (\mathscr T_{X(s)}q_0)(z) \di s + \kappa\int_0^t e^{-\kappa(t-s)} \mathscr T_{X(s)} U(s,\cdot)(z) \di s,
\end{aligned}
\end{align}
where we used the relation $\mathscr T_Xm_0=m_0$ to  see the cancellation:
\begin{align}
\begin{aligned} \label{D-10}
 & (e^{-\kappa t} - 1) m_0 + \kappa\int_0^t e^{-\kappa(t-s)} \mathscr T_{X(s)} m_0 \di s \\
 & \hspace{0.2cm} =  (e^{-\kappa t} - 1) m_0 + \kappa m_0 \int_0^t e^{-\kappa(t-s)} \di s =  (e^{-\kappa t} - 1) m_0 + m_0 (1 - e^{-\kappa t}) = 0.
\end{aligned}
\end{align}
We combine \eqref{D-7}, \eqref{D-9} and \eqref{D-10} to get the relation $\eqref{D-4}_2$:
\begin{align*}
\begin{aligned}
	V(t,z)
	&= m_0 + e^{-\kappa t} q_0(z) \\
        &+\kappa\int_0^t e^{-\kappa(t-s)}e^{-\kappa s}
	(\mathscr T_{X(s)}q_0)(z)\,\di s +\kappa\int_0^t e^{-\kappa(t-s)}
	\mathscr T_{X(s)}U(s,\cdot)(z) \,\di s.
\end{aligned}
\end{align*}

\vspace{0.2cm}

\noindent $\bullet$~(Derivation of $\eqref{D-4}_1$): By defining relation \eqref{D-8}, we have
\[  U(0,z)=0, \quad z \in {\mathbb R}^{2d}. \]
It follows from \eqref{D-8} and characteristic equation that 
\begin{equation} \label{D-11}
\begin{cases}
\displaystyle {\dot X}(t,z) = V(t, z) = m_0 + e^{-\kappa t} q_0(z) + U(t,z), \quad t > 0, \vspace{6pt}\\
\displaystyle X(0,z) = x.
\end{cases}
\end{equation}
Now, we integrate \eqref{D-11} to find the desired estimate:
\begin{equation}\label{D-12}
X(t,z)=x+m_0t+\frac{1-e^{-\kappa t}}{\kappa}q_0(z)
+\int_0^tU(s,z)\,\di s, \quad t \geq 0.
\end{equation}
\end{proof}

\subsection{Preparatory lemmas} \label{sec:4.2}
In this subsection, we study several properties of the averaging operator $\mathscr T_X$ defined in \eqref{D-2} which will be used crucially in the well-posedness and weak flocking estimates. For later use, if $X\in L^2(\mu_0;\mathbb R^d)$, we set two-point spatial correlation functional:
\begin{equation}\label{D-13}
\mathfrak X_2[X]
:=\left(\iint_{\mathbb R^{4d}}|X(z)-X(z_{\star})|^2\,
\mu_0(\di z)\mu_0(\di z_{\star})\right)^{1/2}.
\end{equation}
In the following lemma, we show that the normalized averaging operator satisfies an $L^2$--$L^\infty$ estimate. 

\begin{lemma}\label{L4.2}
Let $\phi$ be $\alpha$-moderate with $0\le\alpha\le1$, and let $X\in L^2(\mu_0;\mathbb R^d)$ and $h\in L^2(\mu_0;\mathbb R^m)$ for some $m\ge1$. Then there exists a constant $C_{\phi,\alpha}=2^{\alpha+1}C_{\phi}>0$ such that
\begin{equation}\label{D-14}
\|\mathscr T_Xh\|_{L^\infty(\mu_0)}
\le C_{\phi,\alpha}\|h\|_{L^2(\mu_0)}
\bigl(1+\mathfrak X_2[X]\bigr)^\alpha.
\end{equation}
\end{lemma}
\begin{proof} We split the proof into two steps. \newline

\noindent $\bullet$~{\bf Step A}:~It follows from \eqref{D-2} that 
\begin{equation} \label{D-15}
|\mathscr T_Xh(z)| \leq \frac{1}{\mathcal Z_X(z)} \int_{{\mathbb R}^{2d}} \phi(|X(z)-X(z_{\star})|) |h(z_{\star})| \mu_0(\di z_{\star}). 
\end{equation}
To estimate the normalized kernel factor, we set 
\begin{align}
\begin{aligned} \label{D-16}
{\overline X} &:=\int_{\mathbb R^{2d}}X(z)\,\mu_0(\di z), \quad R_{2,X}:=1+\mathfrak X_2[X], \\
\mathcal C_X &:=\{z_{\star} \in {\mathbb R}^{2d}:~|X(z_{\star})-\bar X|\le R_{2,X}\}.
\end{aligned}
\end{align}
Then, we claim that 
\begin{equation} \label{D-17}
\frac{\phi(|X(z)-X(z_{\star})|)}{\mathcal Z_X(z)}
\le 2C_\phi\bigl(1+|X(z_{\star})-\bar X|+R_{2,X}\bigr)^\alpha.
\end{equation}
\noindent {\bf Derivation of \eqref{D-17}}:~Since $\mu_0$ is a probability measure, the standard variance identity gives
\begin{equation}\label{D-18}
\int_{\mathbb R^{2d}}|X(z)-\bar X|^2\,\mu_0(\di z)
=\frac12\mathfrak X_2[X]^2.
\end{equation}
Then, we use \eqref{D-18} and Chebyshev's inequality to get 
\begin{equation} \label{D-19}
\mu_0(\mathcal C_X^c)
\le \frac{\mathfrak X_2[X]^2}{2R_{2,X}^2}\le\frac12,
\quad
\mu_0(\mathcal C_X)\ge\frac12.
\end{equation}
On the other hand, we fix an observation label $z$, and set 
\[
r:=|X(z)-\bar X|,
\]
For $z_{\star}\in\mathcal C_X$,
\[
|X(z)-X(z_{\star})| \leq |X(z)- \overline{X}| + |\overline{X} -X (z_{\star})|  \le r+R_{2,X}.
\]
Since $\phi$ is non-increasing, we have
\begin{align}
\begin{aligned} \label{D-20}
\mathcal Z_X(z) &=\int_{{\mathbb R}^{2d}} \phi(|X(z)-X(z_{\star})|)\,\mu_0(\di z_{\star}) \geq \int_{\mathcal C_X} \phi(|X(z)-X(z_{\star})|)\,\mu_0(\di z_{\star})  \\
& \geq \phi(r+R_{2,X}) \int_{{\mathcal C}_X} \mu_0(\di z_{\star}) \ge\frac12\phi(r+R_{2,X}).
\end{aligned}
\end{align}
  For arbitrary $z_{\star}$, we also set
\[
a:=|X(z)-X(z_{\star})|,
\quad
s:=|X(z_{\star})-\bar X|.
\]
Then, the triangle inequality yields
\[ r = |X(z)-\overline{X}|  \le |X(z) - X(z_{\star})| + |X(z_\star) - \overline{X}| = a+s. \]
Hence, we have 
\[ r+R_{2,X}\le a+s+R_{2,X}. \] 
By monotonicity of $\phi$ and the moderation condition \eqref{C-2}, we obtain
\begin{align}
\frac{\phi(a)}{\mathcal Z_X(z)}\le 2\frac{\phi(a)}{\phi(r+R_{2,X})}\le 2\frac{\phi(a)}{\phi(a+s+R_{2,X})}\le 2C_\phi(1+s+R_{2,X})^\alpha.
\label{D-21}
\end{align}

\vspace{0.2cm}

\noindent $\bullet$~{\bf Step B}:~We combine \eqref{D-15} and \eqref{D-17} and the Cauchy-Schwarz inequality to get 
\begin{align}
\begin{aligned} \label{D-22}
|\mathscr T_Xh(z)|
&\le 2C_\phi\int |h(z_{\star})|
\bigl(1+|X(z_{\star})-\bar X|+R_{2,X}\bigr)^\alpha\,\mu_0(\di z_{\star})\\
&\le 2C_\phi\|h\|_{L^2(\mu_0)}
\left(\int\bigl(1+|X-\bar X|+R_{2,X}\bigr)^{2\alpha}\,\di\mu_0\right)^{1/2}.
\end{aligned}
\end{align}
On the other hand, since $0\le\alpha\le1$ and $\mu_0$ is a probability measure, we have
\begin{equation} \label{D-23}
\left(\int A^{2\alpha}\,\di\mu_0\right)^{1/2}
\le\left(\int A^2\,\di\mu_0\right)^{\alpha/2}
\qquad (A\ge1).
\end{equation}
We apply this with $A=1+|X-\bar X|+R_{2,X}$ and use \eqref{D-18} to find 
\begin{align}
\begin{aligned} \label{D-24}
\left(\int A^2\,\di\mu_0\right)^{1/2} &\le 1+R_{2,X}+\|X-\bar X\|_{L^2(\mu_0)}=2+\mathfrak X_2[X]+\frac1{\sqrt2}\mathfrak X_2[X] \\
&\le 2\bigl(1+\mathfrak X_2[X]\bigr).
\end{aligned}
\end{align}
Finally, we combine \eqref{D-22}, \eqref{D-23} and \eqref{D-24} to obtain
\[
|\mathscr T_Xh(z)|
\le 2^{\alpha+1}C_{\phi}\|h\|_{L^2(\mu_0)}
\bigl(1+\mathfrak X_2[X]\bigr)^{\alpha},
\]
which proves \eqref{D-14}.
\end{proof}
\begin{remark}
It follows from \eqref{C-6} that 
\[ q_0 \in L^2(\mu_0;\mathbb R^d). \]
Now, we take $h=q_0$ and use the result of Lemma \ref{L4.2} to see
\begin{equation}\label{D-25}
\|\mathscr T_Xq_0\|_{L^\infty(\mu_0)}
\le C_{\phi,\alpha}\|q_0\|_{L^2(\mu_0)}
\bigl(1+\mathfrak X_2[X]\bigr)^\alpha.
\end{equation}
\end{remark}
\vspace{0.2cm}
Next, we provide an estimate on the two-point spatial correlation $\mathfrak X_2$. For a given $U\in\mathcal C([0,T];L^\infty(\mu_0;\mathbb R^d))$, we set 
\[
X_U(t,z)=x+m_0t+\frac{1-e^{-\kappa t}}{\kappa}q_0(z)
+\int_0^tU(s,z)\,\di s.
\]
\begin{lemma} 
\label{L4.3}
The following estimate holds.
\begin{equation} \label{D-26}
\mathfrak X_2[X_U(t)] \le \mathcal X_2(0)+\frac1\kappa\mathcal V_2(0)
+2\int_0^t\|U(s)\|_{L^\infty(\mu_0)}\,\di s.
\end{equation}
\end{lemma}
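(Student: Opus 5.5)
The estimate follows from the explicit formula for $X_U(t,z)$ together with Minkowski's inequality in $L^2(\mu_0\otimes\mu_0)$. For two labels $z=(x,v)$ and $z_\star=(x_\star,v_\star)$, the common drift $m_0t$ cancels. Since $q_0(z)-q_0(z_\star)=v-v_\star$, we get
\[
X_U(t,z)-X_U(t,z_\star)=(x-x_\star)+\frac{1-e^{-\kappa t}}{\kappa}(v-v_\star)+\int_0^t\bigl(U(s,z)-U(s,z_\star)\bigr)\,\di s.
\]
We then take the $L^2(\mu_0(\di z)\mu_0(\di z_\star))$ norm of both sides and apply the triangle inequality, handling the three terms one at a time.

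\textbf{Position term.} Its norm is exactly $\mathcal X_2(0)$, since $X(0,z)=x$.

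\textbf{Velocity term.} Using $0\le 1-e^{-\kappa t}\le1$, its norm is bounded by $\kappa^{-1}\mathcal V_2(0)$, since $V(0,z)=v$. Both quantities are finite by the second-moment assumption \eqref{C-6}.

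\textbf{Integral term.} We use Minkowski's integral inequality to move the norm inside the time integral. This gives the bound
\[
\int_0^t\|U(s,z)-U(s,z_\star)\|_{L^2(\mu_0\otimes\mu_0)}\,\di s.
\]
For each fixed $s$, we bound the integrand by $|U(s,z)|+|U(s,z_\star)|\le 2\|U(s)\|_{L^\infty(\mu_0)}$, which holds for $\mu_0\otimes\mu_0$-a.e.\ pair. Because $\mu_0\otimes\mu_0$ is a probability measure, the $L^2$ norm is at most the a.e.\ bound, so the $s$-integrand is at most $2\|U(s)\|_{L^\infty(\mu_0)}$. Summing the three contributions gives \eqref{D-26}.

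\textbf{Expected obstacle.} The only delicate point is measurability and integrability in $(s,z)$, which is needed to apply Minkowski's integral inequality. Since $U\in\mathcal C([0,T];L^\infty(\mu_0))$, the map $s\mapsto\|U(s)\|_{L^\infty}$ is continuous and hence integrable. One can also avoid joint measurability by a Riemann-sum approximation of $\int_0^tU(s,\cdot)\,\di s$ in $L^\infty(\mu_0)$, which converges by uniform continuity on $[0,t]$, and then pass to the limit in the triangle inequality. Everything else is routine.
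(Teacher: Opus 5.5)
Your proposal is correct and follows the paper's proof essentially line by line. The paper also writes out $X_U(t,z)-X_U(t,z_\star)$ and applies Minkowski's inequality in $L^2(\mu_0\otimes\mu_0)$, which gives the intermediate bound with the factor $(1-e^{-\kappa t})/\kappa$. It then bounds the $U$-term by $2\|U(s)\|_{L^\infty(\mu_0)}$ and uses $1-e^{-\kappa t}\le 1$; your measurability remark concerns a point the paper passes over silently and does not change the argument.
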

\begin{proof} We first claim that 
\begin{equation} \label{D-27}
\mathfrak X_2[X_U(t)] \le \mathcal{X}_2(0)
+\frac{1-e^{-\kappa t}}{\kappa}\mathcal V_2(0)
+2\int_0^t\|U(s)\|_{L^\infty(\mu_0)} \di s.
\end{equation}
\noindent {\it Proof of \eqref{D-27}}:~For $z,z_{\star}\in\mathbb R^{2d}$,
\begin{align*}
X_U(t,z)-X_U(t,z_{\star})
=x-x_{\star}
+\frac{1-e^{-\kappa t}}{\kappa}\bigl(q_0(z)-q_0(z_{\star})\bigr)+\int_0^t\bigl(U(s,z)-U(s,z_{\star})\bigr)\,\di s.
\end{align*}
Since $q_0(z)-q_0(z_{\star})=v-v_{\star}$, Minkowski's inequality in $L^2(\mu_0\otimes\mu_0)$ gives
\begin{align}
\begin{aligned} \label{D-28}
\mathfrak X_2[X_U(t)]
&\le \mathcal X_2(0)
+\frac{1-e^{-\kappa t}}{\kappa}\mathcal V_2(0)\\
&\quad+\int_0^t
\left(\iint|U(s,z)-U(s,z_{\star})|^2\,\mu_0(\di z)\mu_0(\di z_{\star})\right)^{1/2}\di s.
\end{aligned}
\end{align}
On the other hand, note that 
\begin{align}
\begin{aligned} \label{D-29}
& \iint|U(s,z)-U(s,z_{\star})|^2\,\mu_0(\di z)\mu_0(\di z_{\star}) \\
& \hspace{1cm} \leq 4 \|U(s)\|^2_{L^\infty(\mu_0)} \iint \mu_0(\di z)\mu_0(\di z_{\star}) \leq 4 \|U(s)\|^2_{L^\infty(\mu_0)}.
\end{aligned}
\end{align}
We combine \eqref{D-28} and \eqref{D-29} to get \eqref{D-27}. Finally, we use $1-e^{-\kappa t} \leq 1$ to get the desired estimate \eqref{D-26}.
\end{proof}
Next, we study the stability of the normalized averaging operator. 
\begin{lemma} \label{L4.4}
Suppose that the communication weight function $\phi$ satisfies 
\begin{equation} \label{D-30}
			|\log\phi(r)-\log\phi(s)|\le L_\phi|r-s|,
			\quad r,s\ge0,
\end{equation}
for a nonnegative constant $L_\phi\ge0$, and let $X,Y:\mathbb R^{2d}\to\mathbb R^d$ satisfy
\[
\delta_{X,Y}:=\|X-Y\|_{L^\infty(\mu_0)}<\infty.
\]
Then, for every measurable scalar- or vector-valued $h$ for which the right-hand side below is finite,
\begin{equation}\label{D-31}
\|\mathscr T_Xh-\mathscr T_Yh\|_{L^\infty(\mu_0)}
\le \bigl(e^{4L_\phi\delta_{X,Y}}-1\bigr)
\|\mathscr T_Y|h|\|_{L^\infty(\mu_0)}.
\end{equation}
\end{lemma}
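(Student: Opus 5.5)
The plan is to compare the two normalized kernels pointwise through a multiplicative bound. Fix an observation label $z$ and a source label $z_\star$. By the triangle inequality,
\[
\bigl||X(z)-X(z_\star)|-|Y(z)-Y(z_\star)|\bigr|\le |X(z)-Y(z)|+|X(z_\star)-Y(z_\star)|\le 2\delta_{X,Y}
\]
for $\mu_0\otimes\mu_0$-a.e. pair. The log-Lipschitz condition \eqref{D-30} then gives
\[
e^{-2L_\phi\delta_{X,Y}}\le \frac{\phi(|X(z)-X(z_\star)|)}{\phi(|Y(z)-Y(z_\star)|)}\le e^{2L_\phi\delta_{X,Y}} .
\]
Integrating this in $z_\star$ against $\mu_0$ yields the same two-sided bound for the ratio $\mathcal Z_X(z)/\mathcal Z_Y(z)$. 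Dividing the two bounds, we get
\[
e^{-4L_\phi\delta}\le \frac{k_X(z,z_\star)}{k_Y(z,z_\star)}\le e^{4L_\phi\delta},
\]
which is where the factor $4$ comes from.

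Next, write
\[
\mathscr T_Xh(z)-\mathscr T_Yh(z)=\int \bigl(k_X-k_Y\bigr)(z,z_\star)\,h(z_\star)\,\mu_0(\di z_\star).
\]
The ratio bound gives
\[
|k_X-k_Y|=k_Y\left|\frac{k_X}{k_Y}-1\right|\le k_Y\max\{e^{4L_\phi\delta}-1,\;1-e^{-4L_\phi\delta}\}=(e^{4L_\phi\delta}-1)\,k_Y .
\]
Hence
\[
|\mathscr T_Xh(z)-\mathscr T_Yh(z)|\le (e^{4L_\phi\delta}-1)\int k_Y(z,z_\star)|h(z_\star)|\,\mu_0(\di z_\star)=(e^{4L_\phi\delta}-1)\,\mathscr T_Y|h|(z).
\]
For vector-valued $h$ we simply move the norm inside the integral. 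Taking the essential supremum in $z$ gives \eqref{D-31}.

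The main thing to be careful about is integrability and the null sets. The hypothesis only says $\|X-Y\|_{L^\infty(\mu_0)}<\infty$, so the pointwise distance bound holds for $z,z_\star$ outside a $\mu_0$-null set, and that suffices for both the integral in $z_\star$ and the essential supremum in $z$. Finiteness of $\mathscr T_Y|h|(z)$ for a.e. $z$ is part of the assumption that the right-hand side is finite. The same ratio bound shows $\mathscr T_X|h|\le e^{4L_\phi\delta}\mathscr T_Y|h|<\infty$, so both averages are well-defined and the subtraction is legitimate. I do not expect a genuine obstacle beyond this bookkeeping. The key point is to keep everything multiplicative, working with the ratio $k_X/k_Y$ rather than additive differences of $\phi$; this is what lets the unbounded tail of $h$ be absorbed into $\mathscr T_Y|h|$.
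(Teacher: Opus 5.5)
Your proposal is correct and follows essentially the same route as the paper: the two-sided $e^{\pm 2L_\phi\delta_{X,Y}}$ bound on the weight ratio, integration in $z_\star$ to control $\mathcal Z_X/\mathcal Z_Y$, the resulting $e^{\pm 4L_\phi\delta_{X,Y}}$ kernel ratio, and the bound $|k_X-k_Y|\le(e^{4L_\phi\delta_{X,Y}}-1)k_Y$ integrated against $|h|$. Your remarks on the $\mu_0$-null sets and on $e^{x}-1\ge 1-e^{-x}$ spell out details that the paper leaves implicit.
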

\begin{proof}
By the reverse triangle inequality, we have
\[
\begin{aligned}
	\Big |
	|X(z)-X(z_{\star})|
	-
	|Y(z)-Y(z_{\star})|
	\Big |
	\le
	|X(z)-Y(z)|
	+
	|X(z_{\star})-Y(z_{\star})|
	\le 2\delta_{X,Y}.
\end{aligned}
\]
Therefore, the logarithmic Lipschitz condition \eqref{D-30} gives
\[
\left|
\log\phi(|X(z)-X(z_{\star})|)
-
\log\phi(|Y(z)-Y(z_{\star})|)
\right|
\le 2L_\phi\delta_{X,Y}.
\]
Exponentiating the above inequality yields
\begin{equation}\label{D-32}
	e^{-2L_\phi\delta_{X,Y}}
	\le
	\frac{\phi(|X(z)-X(z_{\star})|)}
	{\phi(|Y(z)-Y(z_{\star})|)}
	\le
	e^{2L_\phi\delta_{X,Y}}.
\end{equation}
Recall that 
\[
\mathcal Z_X(z):=\int\phi(|X(z)-X(z_{\star})|)\,\mu_0(\di z_{\star}),
\qquad
\mathcal Z_Y(z):=\int\phi(|Y(z)-Y(z_{\star})|)\,\mu_0(\di z_{\star}).
\]
We integrate \eqref{D-32} in $z_{\star}$ to find 
\[
e^{-2L_\phi\delta_{X,Y}}\le\frac{\mathcal Z_X(z)}{\mathcal Z_Y(z)}\le e^{2L_\phi\delta_{X,Y}}.
\]
Therefore the normalized kernels
\[
k_X(z,z_{\star}):=\frac{\phi(|X(z)-X(z_{\star})|)}{\mathcal Z_X(z)} 
\quad \mbox{and} \quad 
k_Y(z,z_{\star}):=\frac{\phi(|Y(z)-Y(z_{\star})|)}{\mathcal Z_Y(z)}
\]
satisfy
\[
e^{-4L_\phi\delta_{X,Y}}\le\frac{k_X(z,z_{\star})}{k_Y(z,z_{\star})}\le e^{4L_\phi\delta_{X,Y}}.
\]
It follows that
\[
|k_X(z,z_{\star})-k_Y(z,z_{\star})|
\le\bigl(e^{4L_\phi\delta_{X,Y}}-1\bigr)k_Y(z,z_{\star}).
\]
Thus, we have
\[
|\mathscr T_Xh(z)-\mathscr T_Yh(z)| \le\int |k_X(z,z_{\star})-k_Y(z,z_{\star})|\,|h(z_{\star})|\,\mu_0(\di z_{\star}) \le\bigl(e^{4L_\phi\delta_{X,Y}}-1\bigr)\mathscr T_Y|h|(z),
\]
which verifies \eqref{D-31} after taking the essential supremum in $z$.
\end{proof}

\subsection{Local well-posedness}
\label{sec:4.3}
In this subsection, we study the local well-posedness of \eqref{A-3}. For this, we first consider a unique solvability of the self-consistent equation for $U$:
\begin{equation} \label{D-33}
U(t,z)
		= \kappa\int_0^t e^{-\kappa(t-s)}
		\mathscr T_{X(s)}U(s,\cdot)(z)\,\di s +\kappa\int_0^t e^{-\kappa(t-s)}e^{-\kappa s}
		(\mathscr T_{X(s)}q_0)(z)\,\di s.
\end{equation}	

\begin{proposition}[Local Lagrangian well-posedness]
\label{P4.1}
Suppose that the assumptions in Proposition~\ref{P3.1} hold. Then the following assertions hold.
\begin{enumerate}
\item
There exists $T_0>0$ such that the equation \eqref{D-33} admits a unique solution $U\in\mathcal C([0,T_0];L^\infty(\mu_0;\mathbb R^d))$. 
\vspace{0.2cm}
\item
For associated $X$ and $V$ in \eqref{D-4}, the push-forward measure \eqref{C-7} forms a local Lagrangian weak solution to \eqref{A-3}.
\end{enumerate}
\end{proposition}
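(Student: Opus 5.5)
We will construct the solution of \eqref{D-33} by a Banach fixed-point argument. Fix $T_0>0$ and $R>0$, to be chosen, and let
\[
\mathcal B_{T_0,R}:=\Bigl\{U\in\mathcal C([0,T_0];L^\infty(\mu_0;\mathbb R^d)):\ U(0)=0,\ \sup_{0\le t\le T_0}\|U(t)\|_{L^\infty(\mu_0)}\le R\Bigr\},
\]
with the sup-norm metric. For $U\in\mathcal B_{T_0,R}$, define $X_U$ as above Lemma~\ref{L4.3} and set
\[
\Gamma[U](t,z):=\kappa\int_0^t e^{-\kappa(t-s)}\Bigl(\mathscr T_{X_U(s)}U(s)(z)+e^{-\kappa s}\mathscr T_{X_U(s)}q_0(z)\Bigr)\di s.
\]

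\textbf{Self-mapping.} By Lemma~\ref{L4.3}, $\mathfrak X_2[X_U(s)]\le \mathcal X_2(0)+\kappa^{-1}\mathcal V_2(0)+2RT_0=:K$, and $\mathcal X_2(0),\mathcal V_2(0)<\infty$ by \eqref{C-6}. Then \eqref{D-3} and \eqref{D-25} give
\[
\|\Gamma[U](t)\|_{L^\infty(\mu_0)}\le (1-e^{-\kappa t})\Bigl(R+C_{\phi,\alpha}\|q_0\|_{L^2(\mu_0)}(1+K)^\alpha\Bigr).
\]
The damping prefactor makes this bound weaker than it looks. For $R\ge 2C_{\phi,\alpha}\|q_0\|_{L^2}(1+K)^\alpha$, the right side is at most $(1-e^{-\kappa T_0})\cdot\frac32 R$, which is $\le R$ once $\kappa T_0$ is small. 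Since $K$ depends on $RT_0$, we first choose $R$ large with $RT_0\le1$ and then shrink $T_0$. Time continuity in $L^\infty$ requires continuity of $s\mapsto \mathscr T_{X_U(s)}h$. We obtain it from Lemma~\ref{L4.4}, using $\|X_U(s)-X_U(s')\|_{L^\infty}\le |m_0||s-s'|+R|s-s'|+|e^{-\kappa s}-e^{-\kappa s'}|\,\|q_0\|/\kappa$ pointwise. The last term is a problem because $q_0$ is not bounded. To handle it, we write $\Gamma[U]$ directly as an integral in time with integrand bounded by the self-mapping estimate, so $t\mapsto\Gamma[U](t)$ is Lipschitz in $L^\infty$ regardless. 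This suffices.

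\textbf{Contraction.} This is the main step. For $U_1,U_2\in\mathcal B_{T_0,R}$, the key fact is that $X_{U_1}(s)-X_{U_2}(s)=\int_0^s(U_1-U_2)$, so the unbounded tail $q_0$ cancels and $\delta(s):=\|X_{U_1}(s)-X_{U_2}(s)\|_{L^\infty}\le s\|U_1-U_2\|_\infty\le 2RT_0$. We split
\[
\mathscr T_{X_1}U_1-\mathscr T_{X_2}U_2=\mathscr T_{X_1}(U_1-U_2)+(\mathscr T_{X_1}-\mathscr T_{X_2})U_2 ,
\]
and similarly for the $q_0$ term. By \eqref{D-31} and $e^{4L_\phi\delta}-1\le 4L_\phi\delta e^{8L_\phi RT_0}$, the operator differences are bounded by $C\,\delta(s)\bigl(R+\|\mathscr T_{X_2}|q_0|\|_{L^\infty}\bigr)$. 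The last quantity is finite by Lemma~\ref{L4.2} applied to $h=|q_0|$. Hence
\[
\|\Gamma[U_1]-\Gamma[U_2]\|_\infty\le \kappa T_0\bigl(1+C(R,K)T_0\bigr)\|U_1-U_2\|_\infty ,
\]
which is $<1$ for small $T_0$. The fixed point gives existence, and uniqueness within $\mathcal B_{T_0,R}$. Uniqueness in the whole class then follows by taking $R$ larger than the sup of any competing solution and repeating the estimate on a shorter interval, which gives uniqueness on $[0,T_0]$ by continuation.

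\textbf{Assertion (2).} Given the fixed point $U$, define $X,V$ by \eqref{D-4}. Then $\dot X=V$ holds pointwise. Reversing the computation \eqref{D-9}--\eqref{D-10} shows $V$ satisfies \eqref{D-7}, hence $\dot V=\kappa(\mathscr T_{X(t)}V(t)-V)$. By \eqref{D-6}, this equals $L[\mu_t](X,V)$ with $\mu_t=(X(t),V(t))_\#\mu_0$; here $\mathscr T_{X(t)}V$ is well defined because $V\in L^2(\mu_0)$. For $\zeta\in\mathcal C_c^1$, the chain rule along trajectories, integrated against $\mu_0$ using dominated convergence, gives the weak formulation of \eqref{A-3}. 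Dominated convergence applies because $\zeta$ is compactly supported, so $|\nabla_v\zeta\cdot L|$ is bounded on the support. Continuity of $t\mapsto\int\psi\,\di\mu_t$ follows from pointwise continuity of the trajectories. The expected obstacle throughout is the lack of boundedness of $q_0$; it is handled by the cancellation in the difference $X_{U_1}-X_{U_2}$ and by the $L^2$--$L^\infty$ bound of Lemma~\ref{L4.2}.
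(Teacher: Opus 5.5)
Your proposal is correct and follows essentially the same route as the paper: a Banach fixed point for $U$ on a ball of $\mathcal C([0,T];L^\infty(\mu_0;\mathbb R^d))$. Self-mapping comes from Lemma~\ref{L4.3} and \eqref{D-25}, time-continuity is Lipschitz because the integrand is uniformly bounded, and the contraction uses the cancellation of $q_0$ in $X_{U_1}-X_{U_2}$ together with Lemma~\ref{L4.4} applied to $U_2$ and to $|q_0|$ (bounded via Lemma~\ref{L4.2}); uniqueness in the full class then follows by enlarging the ball and restarting. The differences are only cosmetic: you choose the radius large relative to the tail forcing before shrinking $T_0$, whereas the paper fixes any $M$ and shrinks $T$ so that $\kappa T(M+Q_M)\le M$; and you write out the chain-rule verification of the weak formulation in assertion (2), which the paper only indicates.
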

\begin{proof}
(1) We consider two cases for existence and uniqueness one by one. \newline

\noindent $\bullet$~Step A (Unique solvability in small time interval):~In what follows, we use the contraction mapping theorem for the solvability of \eqref{D-33}. For this, we introduce a closed ball in some function space, and nonlinear map, and then we show that the constructed nonlinear map is continuous from the ball to the same ball. More precisely, we set 
\[
\mathfrak B_T:=\mathcal C([0,T];L^\infty(\mu_0;\mathbb R^d)),
\quad
\|U\|_{\mathfrak B_T}:=\sup_{0\le t\le T}\|U(t)\|_{L^\infty(\mu_0)}.
\]
For $U\in\mathfrak B_T$, we define $X_U$ and $(\mathscr F U)$:~for $t > 0,~z \in {\mathbb R}^{2d}$, 
\begin{align}
\begin{aligned} \label{D-34}
X_U(t,z) &:=x+m_0t+\frac{1-e^{-\kappa t}}{\kappa}q_0(z)+\int_0^tU(s,z)\,\di s, \\
(\mathscr F U)(t,z) &:=\kappa\int_0^t e^{-\kappa(t-s)}\mathscr T_{X_U(s)}U(s,\cdot)(z)\,\di s+\kappa\int_0^t e^{-\kappa(t-s)}e^{-\kappa s}\mathscr T_{X_U(s)}q_0(z)\,\di s.
\end{aligned}
\end{align}

\vspace{0.2cm}

\noindent $\diamond$~Step A.1:  For a fixed $M>0$, we set
\[
\mathbb B_M:=\{U\in\mathfrak B_T:\|U\|_{\mathfrak B_T}\le M\}.
\]
Then, we claim that $\mathscr F$ maps $\mathbb B_M$ into $\mathbb B_M$. To see this, we may restrict to $T\le1$. By Lemma~\ref{L4.3}, for $U\in\mathbb B_M$ and $0\le t\le T$,
\begin{equation} \label{D-35}
\mathfrak X_2[X_U(t)] \leq  \mathcal X_2(0)+\frac1\kappa\mathcal V_2(0)
+2\int_0^t\|U(s)\|_{L^\infty(\mu_0)}\,\di s \leq \mathcal X_2(0)+\frac1\kappa\mathcal V_2(0)+2M.
\end{equation}
Hence \eqref{D-25} and \eqref{D-35} give the uniform bound:
\begin{align}
\begin{aligned} \label{D-36}
\|\mathscr T_{X_U} q_0\|_{L^\infty} &\leq  C_{\phi,\alpha}\|q_0\|_{L^2(\mu_0)}
\bigl(1+\mathfrak X_2[X_U]\bigr)^\alpha \\
& \leq  C_{\phi,\alpha}\|q_0\|_{L^2(\mu_0)} \Big[  1 +   \mathcal X_2(0)+\frac1\kappa\mathcal V_2(0)+2M       \Big ]^{\alpha} =: Q_M.
\end{aligned}
\end{align}
On the other hand, it follows from \eqref{D-3} and \eqref{D-34} that 
\begin{align}
\begin{aligned} \label{D-37}
&\|(\mathscr F U)(t) \|_{L^\infty(\mu_0)} \\
& \leq \kappa\int_0^t e^{-\kappa(t-s)} \| \mathscr T_{X_U(s)}U(s) \|_{L^\infty(\mu_0)} \di s+\kappa\int_0^t e^{-\kappa(t-s)}e^{-\kappa s} \| \mathscr T_{X_U(s)}q_0 \|_{L^\infty(\mu_0)} \di s \\
&  \leq \kappa M \int_0^t \underbrace{e^{-\kappa(t-s)}}_{\leq 1} ds + \kappa Q_M \int_0^t \underbrace{e^{-\kappa(t-s)}e^{-\kappa s}}_{\leq 1} \di s \\
&  \leq \kappa T(M+Q_M).
\end{aligned}
\end{align}
Taking the supremum of the above estimate over $t\in[0,T]$, we obtain 
\[
\|\mathscr F U\|_{\mathfrak B_T}\le\kappa T(M+Q_M).
\]
Now, we choose a sufficiently small $T>0$ such that
\begin{equation}\label{D-38}
\kappa T(M+Q_M)\le M.
\end{equation}
This yields
\[  \mathscr F(\mathbb B_M)\subset\mathbb B_M. \]
The integrands in \eqref{D-34} are uniformly bounded in $L^\infty$. More precisely, by
\eqref{D-5} and \eqref{D-36}, there exists $C_{M,T}>0$ such that
\[
\|(\mathscr F U)(t)-(\mathscr F U)(s)\|_{L^\infty(\mu_0)}
\le C_{M,T}|t-s|,
\qquad 0\le s,t\le T.
\]
Hence $\mathscr F U\in\mathfrak B_T$.\vspace{.2cm}

\noindent $\diamond$~Step A.2: The map $\mathscr F$ is a contraction in norm $\| \cdot \|_{\mathfrak B_T}$: there exists a positive constant $L= L(M, T)< 1$ such that 
\begin{equation} \label{D-39}
\|\mathscr F U-\mathscr F\widetilde U\|_{\mathfrak B_T} \le L \|U-\widetilde U\|_{\mathfrak B_T}.
\end{equation}
{\it (Derivation of \eqref{D-39})}:~Let $U,\widetilde U\in\mathbb B_M$. Then, it follows from \eqref{D-34} and a similar argument in \eqref{D-37} that 
\begin{align}
\begin{aligned} \label{D-40}
&\|(\mathscr F U)(t) - (\mathscr F \widetilde{U})(t) \|_{L^\infty(\mu_0)} \\
& \hspace{1cm} \leq \kappa \int_0^t e^{-\kappa(t-s)} \| \mathscr T_{X_U(s)}U(s) -  \mathscr T_{X_{\widetilde{U}(s)}} \widetilde{U}(s) \|_{L^\infty(\mu_0)} \di s \\
& \hspace{1cm} +\kappa\int_0^t e^{-\kappa(t-s)}e^{-\kappa s} \| \mathscr T_{X_U(s)}q_0   - \mathscr T_{X_{\widetilde{U}}(s)}q_0  \|_{L^\infty(\mu_0)} \di s \\
& \hspace{1cm} =: I_{11} + I_{12}.
\end{aligned}
\end{align}
\noindent $\clubsuit$~(Estimate of $I_{11}$):~We set
\[
\Delta_U:=\|U-\widetilde U\|_{\mathfrak B_T}.
\]
Since the terms containing $x$, $m_0$, and $q_0$ are identical for two candidates,
\begin{equation}\label{D-41}
\delta_t := \|X_U(t)-X_{\widetilde U}(t)\|_{L^\infty}
\le\int_0^t\|U(s)-\widetilde U(s)\|_{L^\infty}\,\di s
\le t\Delta_U.
\end{equation}
For $U,\widetilde U\in \mathbb B_M$, we have
\[
\Delta_U\le 2M,
\quad
\delta_t\le t\Delta_U\le 2Mt\le 2MT.
\]
Now, we are ready to estimate $\| \mathscr T_{X_U}U-\mathscr T_{X_{\widetilde U}}\widetilde U \|_{L^\infty(\mu_0)}$. For this, we use  the decomposition:
\[
\begin{aligned}
	\mathscr T_{X_U(t)}U(t)
	-\mathscr T_{X_{\widetilde U}(t)}\widetilde U(t)
	=\mathscr T_{X_U(t)}
	\big(U(t)-\widetilde U(t)\big)
	+\Big(
	\mathscr T_{X_U(t)}
	-\mathscr T_{X_{\widetilde U}(t)}
	\Big)\widetilde U(t)
\end{aligned}
\]
to get
\begin{align}
\begin{aligned} \label{D-42}
\| \mathscr T_{X_U}U-\mathscr T_{X_{\widetilde U}}\widetilde U \|_{L^\infty(\mu_0)} &\leq \| \mathscr T_{X_U}(U-\widetilde U) \|_{L^\infty(\mu_0)}
+ \| (\mathscr T_{X_U}-\mathscr T_{X_{\widetilde U}})\widetilde U \|_{L^\infty(\mu_0)} \\
&=: I_{111} + I_{112}.
\end{aligned}
\end{align}
Below, we estimate the terms $I_{11i},~i=1,2$ one by one. \newline

\noindent $\clubsuit$~(Estimate of $I_{111}$):~By \eqref{D-5}, one has 
\begin{equation} \label{D-43}
I_{111} \le
\|U(t)-\widetilde U(t)\|_{L^\infty}
\le \Delta_U.
\end{equation}
\noindent $\clubsuit$~(Estimate of $I_{112}$):~We use Lemma~\ref{L4.4} with
\[ X=X_U(t), \quad Y=X_{\widetilde U}(t), \quad h=\widetilde U(t) \]
to see
\begin{equation} \label{D-44}
	I_{112} \le
	\left(e^{4L_\phi\delta_t}-1\right)
	\left\|
	\mathscr T_{X_{\widetilde U}(t)}
	|\widetilde U(t)|
	\right\|_{L^\infty}.
\end{equation}
Next, we estimate the factors in the right-hand side of \eqref{D-44} as follows. \newline

\noindent For the second factor, we use the positivity of $\mathscr T_X$, $\mathscr T_X1=1$, and
$\|\widetilde U(t)\|_{L^\infty}\le M$ to get 
\begin{equation} \label{D-45}
\left\|
\mathscr T_{X_{\widetilde U}(t)}
|\widetilde U(t)|
\right\|_{L^\infty}
\le M.
\end{equation}
For the first factor, we use 
\[  e^r-1\le re^r, \quad \delta_t\le t\Delta_U, \quad \mbox{and} \quad \Delta_U\le2M \]
to obtain
\begin{equation} \label{D-46}
	e^{4L_\phi\delta_t}-1
	\le
	4L_\phi\delta_t e^{4L_\phi\delta_t}\le
	4L_\phi t\Delta_U
	e^{4L_\phi t\Delta_U}\le
	4L_\phi t e^{8L_\phi MT}\Delta_U.
\end{equation}
We collect all the estimates \eqref{D-44}, \eqref{D-45} and \eqref{D-46} to find 
\begin{equation} \label{D-47}
I_{112} \leq 4L_\phi t e^{8L_\phi MT}M\Delta_U.
\end{equation}
Finally, we combine \eqref{D-42}, \eqref{D-43} and \eqref{D-47} to get 
\[
\left\|
\mathscr T_{X_U(t)}U(t)
-\mathscr T_{X_{\widetilde U}(t)}\widetilde U(t)
\right\|_{L^\infty}
\le
\Delta_U
+
4L_\phi t e^{8L_\phi MT}M\Delta_U.
\]
\noindent $\clubsuit$~(Estimate of $I_{12}$):~ Similar to the previous case, we use Lemma~\ref{L4.4} with $h=q_0$ to get 
\[
\begin{aligned}
	\left\|
	\mathscr T_{X_U(t)}q_0
	-\mathscr T_{X_{\widetilde U}(t)}q_0
	\right\|_{L^\infty}\le
	\left(e^{4L_\phi\delta_t}-1\right)
	\left\|
	\mathscr T_{X_{\widetilde U}(t)}|q_0|
	\right\|_{L^\infty}.
\end{aligned}
\]
Since $\widetilde U\in \mathbb B_M$, the spatial estimate used in
\eqref{D-36} applies to $X_{\widetilde U}$ as well. Hence,
by Lemma~\ref{L4.2} applied to $|q_0|$, we have
\[
\left\|
\mathscr T_{X_{\widetilde U}(t)}|q_0|
\right\|_{L^\infty}
\le Q_M.
\]
Therefore, we have
\begin{equation} \label{D-48}
\left\|
\mathscr T_{X_U(t)}q_0
-\mathscr T_{X_{\widetilde U}(t)}q_0
\right\|_{L^\infty}
\le
4L_\phi t e^{8L_\phi MT}Q_M\Delta_U.
\end{equation}
Now, we substitute \eqref{D-48} into \eqref{D-40} and bound the exponential factors by one to find 
\begin{equation}\label{D-49}
\|\mathscr F U-\mathscr F\widetilde U\|_{\mathfrak B_T}
\le\left[\kappa T+4\kappa L_\phi e^{8L_\phi MT}(M+Q_M)T^2\right]\Delta_U.
\end{equation}
After we choose sufficiently small $T$ if necessary, the coefficient in brackets is strictly smaller than one. Hence, the map $\mathscr F$ is a strict contraction on $\mathbb B_M$. Banach's fixed-point theorem yields a unique $U\in\mathbb B_M$ satisfying \eqref{D-33}.  Since every $\mathfrak B_T$-solution satisfies $U(0)=0$ and is continuous as an $L^\infty(\mu_0)$-valued map, any two solutions $U,\widetilde U\in\mathfrak B_T$ belong, after restricting to a sufficiently short interval $[0,T_0]$, to a common ball $\mathbb B_M$. The contraction estimate implies
$$
U=\widetilde U
\qquad\text{on }[0,T_0].
$$

\noindent $\bullet$~Step B (Extension of small time interval to a finite-time interval):  To extend the uniqueness to the whole interval $[0,T]$, suppose that
\[ U=\widetilde U \quad \mbox{on $[0,t_0]$}. \]
Then $U(t_0)=\widetilde U(t_0)$, and the
remainder equation can be restarted at $t_0$. Since both solutions are
bounded in $C([0,T];L^\infty(\mu_0))$, they lie in a common bounded ball
on a sufficiently short interval $[t_0,t_0+\tau]$, where the same
contraction argument applies. Hence we have
\[ U=\widetilde U \quad \mbox{on}~[t_0,t_0+\tau]. \]
Iterating this argument yields
$$
U=\widetilde U
\qquad\text{on }[0,T].
$$
Thus uniqueness holds in the full class $\mathfrak B_T$.\vspace{.1cm}

\noindent (2) We use the definition of $U$, \eqref{D-8} and \eqref{D-12} to derive the desired corresponding local Lagrangian weak solution.
\end{proof}
Next, we provide the finite bound for remainder term $U$.
\begin{lemma}[Finite-time bound for the remainder]
\label{L4.5}
Let $U$ be the maximal local solution furnished by Proposition~\ref{P4.1}, defined on $[0,T_{\max})$. Then, the following assertions hold.
\begin{enumerate}
\item
For every $T<T_{\max}$,
\begin{equation}\label{D-50}
\sup_{0\le t\le T}\|U(t)\|_{L^\infty(\mu_0)}<\infty,
\qquad
\sup_{0\le t\le T}\mathcal X_2(t)<\infty.
\end{equation}
\item
If $T_{\max}<\infty$, the bounds above remain uniform as $T\uparrow T_{\max}$.
\end{enumerate}
\end{lemma}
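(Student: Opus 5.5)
The plan is to derive a closed integral inequality for $u(t):=\|U(t)\|_{L^\infty(\mu_0)}$ directly from the fixed-point equation \eqref{D-33}, and then close it with Grönwall's lemma on every finite interval. Since $U$ solves \eqref{D-33} on $[0,T_{\max})$, we take the $L^\infty(\mu_0)$ norm of both sides. For the first term we use the contraction property \eqref{D-5} of $\mathscr T_{X(s)}$. For the second term we use the $L^2$--$L^\infty$ estimate \eqref{D-25} (Lemma~\ref{L4.2} with $h=q_0$). This gives
\[
u(t)\le \kappa\int_0^t u(s)\,\di s+\kappa C_{\phi,\alpha}\|q_0\|_{L^2(\mu_0)}\int_0^t\bigl(1+\mathfrak X_2[X(s)]\bigr)^\alpha\,\di s .
\]
Here $X(s)=X_U(s)$ is given by \eqref{D-4}$_1$, so Lemma~\ref{L4.3} applies and yields
\[
\mathfrak X_2[X(s)]\le A_0+2\int_0^s u(\tau)\,\di\tau,\qquad A_0:=\mathcal X_2(0)+\tfrac1\kappa\mathcal V_2(0).
\]
The constant $A_0$ is finite because of the second-moment assumption in \eqref{C-6}.

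The next step is to linearize the nonlinearity. Since $0\le\alpha\le1$ and the base is at least $1$, we have $(1+y)^\alpha\le 1+y$. Setting $Q:=\kappa C_{\phi,\alpha}\|q_0\|_{L^2(\mu_0)}$ and using $\int_0^t\int_0^s u\le t\int_0^t u$, we get, for $t\le T<T_{\max}$,
\[
u(t)\le Q T(1+A_0)+\kappa(1+2QT)\int_0^t u(s)\,\di s .
\]
The map $t\mapsto u(t)$ is continuous, because $U\in\mathcal C([0,T];L^\infty(\mu_0))$. Grönwall's lemma therefore gives
\[
u(t)\le QT(1+A_0)\,e^{\kappa(1+2QT)T}=:K(T),\qquad 0\le t\le T .
\]
This proves the first bound in \eqref{D-50}.

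The second bound in \eqref{D-50} then follows from Lemma~\ref{L4.3}, because $\mathcal X_2(t)=\mathfrak X_2[X(t)]$. We obtain
\[
\mathcal X_2(t)\le A_0+2tK(T),\qquad 0\le t\le T .
\]

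For assertion (2), the key observation is that $K(T)$ is an explicit function that is nondecreasing and finite for every finite $T$. It does not depend on $T_{\max}$ or on any property of the local solution beyond \eqref{D-33}. Hence, if $T_{\max}<\infty$, we may replace $T$ by $T_{\max}$ in the constants. The Grönwall argument on $[0,T]$ then gives $\sup_{t<T_{\max}}u(t)\le K(T_{\max})$, and likewise $\mathcal X_2(t)\le A_0+2T_{\max}K(T_{\max})$, uniformly as $T\uparrow T_{\max}$.

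The main obstacle is closing the estimate despite the feedback loop: the forcing $\mathscr T_{X}q_0$ depends on the spatial spread $\mathfrak X_2[X]$, which in turn grows with $\int_0^t u$. The restriction $\alpha\le1$ is exactly what keeps this feedback at most linear, so that Grönwall applies and no finite-time blow-up of the comparison function occurs. For $\alpha>1$ one would get a superlinear integral inequality and the argument would fail. A minor technical point is that \eqref{D-25} requires $X(s)\in L^2(\mu_0)$; this follows from the representation \eqref{D-4}$_1$, the moment assumption in \eqref{C-6}, and boundedness of $U$ on $[0,T]$.
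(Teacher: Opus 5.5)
Your proposal is correct and follows essentially the paper's argument. You take the $L^\infty(\mu_0)$ norm in \eqref{D-33}, control the two terms by \eqref{D-5} and Lemma~\ref{L4.2} together with Lemma~\ref{L4.3}, linearize via $(1+y)^\alpha\le 1+y$ for $\alpha\le 1$, and close with Gr\"onwall to get a bound depending only on $T$ and the initial data. The paper only organizes the bookkeeping differently, through $N_U=e^{\kappa t}M_U$ and $K_U=H_U+\int_0^t H_U(s)\,\di s$.

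There is one harmless arithmetic slip. Since $Q$ already contains the factor $\kappa$, the coefficient of $\int_0^t u(s)\,\di s$ should be $\kappa+2QT$, not $\kappa(1+2QT)$. This only changes the exponent in $K(T)$ and does not affect finiteness or monotonicity in $T$.
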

\begin{proof}
\noindent (1)~We set 
\[
M_U(t):=\|U(t)\|_{L^\infty(\mu_0)},
\quad
A_{\rm init}:=\mathcal X_2(0)+\frac1\kappa\mathcal V_2(0).
\]
Then, we use Lemma~\ref{L4.3} to get 
\begin{equation}\label{D-51}
\mathfrak X_2(t)\le A_{\rm init}+2\int_0^tM_U(s)\,\di s.
\end{equation}
We take the $L^\infty$-norm in \eqref{D-4} and use \eqref{D-5} to find 
\[
M_U(t)
\le\kappa\int_0^t e^{-\kappa(t-s)}M_U(s)\,\di s
+\kappa e^{-\kappa t}\int_0^t\|\mathscr T_{X(s)}q_0\|_{L^\infty}\,\di s.
\]
We define
\[
N_U(t):=e^{\kappa t}M_U(t), \qquad
g(t):=\|\mathscr T_{X(t)}q_0\|_{L^\infty}.
\]
Then, we have
\[
N_U(t)\le
\kappa\int_0^t N_U(s)\,\di s
+\kappa\int_0^t g(s)\,\di s.
\]
By the integral form of Grönwall's inequality,
\[
N_U(t)
\le
\kappa\int_0^t e^{\kappa(t-s)}g(s)\,\di s.
\]
Therefore, since $M_U(t)=e^{-\kappa t}N_U(t)$, we have
\begin{equation}\label{D-52}
M_U(t)\le\kappa\int_0^t e^{-\kappa s}
\|\mathscr T_{X(s)}q_0\|_{L^\infty}\,\di s.
\end{equation}
By Lemma~\ref{L4.2} and \eqref{D-51}, we get
\[
\begin{aligned}
	\|\mathscr T_{X(s)}q_0\|_{L^\infty}
	&\le C_{\phi,\alpha}\|q_0\|_{L^2(\mu_0)}
	\bigl(1+\mathcal X_2(s)\bigr)^\alpha \\
	&\le C_{\phi,\alpha}\|q_0\|_{L^2(\mu_0)}
	\left(
	1+A_{\rm init}
	+2\int_0^s M_U(r)\,\di r
	\right)^\alpha .
\end{aligned}
\]
Substituting this estimate into \eqref{D-52}, we obtain
\begin{equation}\label{D-53}
	M_U(t)
	\le C_{\rm rem}\int_0^t e^{-\kappa s}
	\left(1+A_{\rm init}+2\int_0^sM_U(r)\,\di r\right)^\alpha\di s,
\end{equation}
where
\[
C_{\rm rem}:=\kappa C_{\phi,\alpha}\|q_0\|_{L^2(\mu_0)}.
\]
Define
\[
H_U(t):=1+A_{\rm init}+2\int_0^tM_U(s)\,\di s.
\]
Since $H_U\ge1$ and $0\le\alpha\le1$, we have 
\[ H_U^\alpha\le H_U. \]
Dropping the harmless factor $e^{-\kappa s}$ from \eqref{D-53}, we have
\[
M_U(t)\le C_{\rm rem}\int_0^tH_U(s)\,\di s,
\quad
H_U'(t)=2M_U(t)\le2C_{\rm rem}\int_0^tH_U(s)\,\di s.
\]
We set 
\[
K_U(t):=H_U(t)+\int_0^tH_U(s)\,\di s.
\]
Then, we have
\[
K_U'(t)=H_U'(t)+H_U(t)
\le 2C_{\rm rem}\int_0^tH_U(s)\,\di s+H_U(t)
\le(1+2C_{\rm rem})K_U(t).
\]
Thus Grönwall's inequality gives
\begin{equation}\label{D-54}
H_U(t)\le K_U(t)\le H_U(0)e^{(1+2C_{\rm rem})t},
\qquad 0\le t<T_{\max}.
\end{equation}
Returning to \eqref{D-53}, we obtain, for every finite $T<T_{\max}$,
\[
\sup_{0\le t\le T}M_U(t)
\le C_{\rm rem}T H_U(0)e^{(1+2C_{\rm rem})T}<\infty.
\]
Together with \eqref{D-51}, this verifies \eqref{D-50}. Importantly, the right-hand sides depend only on $T$ and the initial data, not on the distance from $T$ to $T_{\max}.$ \newline

\noindent (2)~If  $T_{\max}<\infty$, the same estimates remain to be bounded as $T\uparrow T_{\max}$.
\end{proof}

\subsection{Proof of Proposition~\ref{P3.1}}
\label{sec:4.4}
In what follows, we split the proof into four steps. 

\vspace{0.1cm}

\noindent $\bullet$~ \textbf{Step A} (Global continuation):~Let $[0,T_{\max})$ be the maximal interval of existence of the local solution. We claim that 
\begin{equation} \label{D-55}
T_{\max} = \infty.
\end{equation}
{\it Proof of \eqref{D-55}}:~Suppose the contrary holds, i.e., 
\[ T_{\max}<\infty. \]
Now, we use  Lemma~\ref{L4.5} to find 
\[
\sup_{0\le t<T_{\max}}\|U(t)\|_{L^\infty}<\infty,
\qquad
\sup_{0\le t<T_{\max}}\mathcal X_2(t)<\infty.
\]
Let $t_0<T_{\max}$. Then, for $t\ge t_0$, the remainder equation can be written as
\begin{equation}\label{D-56}
\begin{aligned}
U(t)
=e^{-\kappa(t-t_0)}U(t_0)
+\kappa\int_{t_0}^t e^{-\kappa(t-s)}\mathscr T_{X(s)}U(s)\,\di s+\kappa\int_{t_0}^t e^{-\kappa(t-s)}e^{-\kappa s}\mathscr T_{X(s)}q_0\,\di s,
\end{aligned}
\end{equation}
and
\begin{equation}\label{D-57}
X(t,z)=X(t_0,z)+m_0(t-t_0)
+\frac{e^{-\kappa t_0}-e^{-\kappa t}}{\kappa}q_0(z)
+\int_{t_0}^tU(s,z)\,\di s.
\end{equation}
When two candidate extensions with the same history up to \(t_0\) are compared, all terms in \eqref{D-57} except the last one are canceled. The restarted equation has the same contraction structure, with an additional known forcing term \(e^{-\kappa(t-t_0)}U(t_0)\). Since this term is uniformly bounded, the same fixed-point argument as in Proposition~\ref{P4.1} applies on \([t_0,t_0+\tau]\), with constants depending only on the uniform bounds above and the initial moments. Hence \(\tau>0\) can be chosen uniformly for \(t_0\) sufficiently close to \(T_{\max}\), which extends the solution beyond \(T_{\max}\), a contradiction. Thus we have \[T_{\max}=\infty.\]

\medskip
\noindent $\bullet$~
\textbf{Step B} (Propagation of the second phase-space moment):~Fix $T>0$ and we set
\[
M_T:=\sup_{0\le t\le T}\|U(t)\|_{L^\infty}<\infty.
\]
Then, it follows from $\eqref{D-4}_2$ that 
\begin{equation}\label{D-58}
\|V(t,\cdot)\|_{L^2(\mu_0)}
\le |m_0|+\|q_0\|_{L^2(\mu_0)}+M_T,
\qquad 0\le t\le T.
\end{equation}
Similarly, the estimate \eqref{D-12} gives
\begin{equation}\label{D-59}
\|X(t,\cdot)\|_{L^2(\mu_0)}
\le \|x\|_{L^2(\mu_0)}+|m_0|T
+\frac1\kappa\|q_0\|_{L^2(\mu_0)}+TM_T.
\end{equation}
Therefore, we have
\[
\sup_{0\le t\le T}\int_{\mathbb R^{2d}}
\bigl(|X(t,z)|^2+|V(t,z)|^2\bigr)\,\mu_0(\di z)<\infty,
\]
which is \eqref{C-8}. \newline

\noindent $\bullet$~{\bf Step C} (Global uniqueness):~Let $(X,V)$ and $(\widetilde X,\widetilde V)$ be two global Lagrangian solutions in the class stated in Proposition~\ref{P3.1}, with the same initial datum, and let $U,\widetilde U$ denote their remainders. By the definition of this class, both remainders belong to $C([0,T];L^\infty(\mu_0))$ on every finite interval $[0,T]$. Starting at $t=0$, the contraction estimate \eqref{D-49} gives equality on a short interval. If two solutions agree up to some time $t_0$, the restarted equations \eqref{D-56}--\eqref{D-57} and the same contraction argument give equality on a further interval $[t_0,t_0+\tau]$. Iterating finitely many times on each compact time interval yields
\[
U=\widetilde U,
\quad
X=\widetilde X,
\quad
V=\widetilde V
\]
on $[0,\infty)$. This proves uniqueness. \newline

\noindent $\bullet$ \textbf{Step D} (Verification of the weak formulation):~The fixed-point identity $\eqref{D-4}_3$ implies that, for $\mu_0$-almost every $z$, $U(\cdot,z)$ is locally absolutely continuous and satisfies
\[
\dot U(t,z)
=-\kappa U(t,z)+\kappa\mathscr T_{X(t)}U(t,\cdot)(z)
+\kappa e^{-\kappa t}\mathscr T_{X(t)}q_0(z)
\]
for almost every $t>0$. We use $\eqref{D-4}_1$ and $\eqref{D-4}_2$ to obtain
\[
\dot X(t,z)=V(t,z),
\quad
\dot V(t,z)=\kappa\bigl(\mathscr T_{X(t)}V(t,\cdot)(z)-V(t,z)\bigr), \quad \mbox{a.e.}~t > 0.
\]
Define
\[
\mu_t:=(X(t,\cdot),V(t,\cdot))_{\#}\mu_0.
\]
By the push-forward identity,
\[
\mathscr T_{X(t)}V(t,\cdot)(z)=\bar v_{\mu_t}(X(t,z)).
\]
Moreover, \eqref{D-58}, \eqref{D-59}, and the continuity of $U$ in $L^\infty$ imply that the map $t\mapsto(X(t),V(t))$ is continuous in $L^2(\mu_0)$ on every finite interval. Hence $t\mapsto\mu_t$ is narrowly continuous.\vspace{.1cm}

Let $\psi\in\mathcal C_c^1([0,\infty)\times\mathbb R^{2d})$. For $\mu_0$-almost every $z$, the chain rule along the characteristic trajectory gives
\begin{align*}
\frac{\di}{\di t}\psi(t,X(t,z),V(t,z))
=\partial_t\psi(t,X,V)+V\cdot\nabla_x\psi(t,X,V)+\kappa\bigl(\bar v_{\mu_t}(X)-V\bigr)\cdot\nabla_v\psi(t,X,V).
\end{align*}
On every finite time interval, Lemma~\ref{L4.2}, \eqref{D-5}, and Lemma~\ref{L4.5} give
\[
\bar v_{\mu_t}(X(t,z))
=m_0+e^{-\kappa t}\mathscr T_{X(t)}q_0(z)
+\mathscr T_{X(t)}U(t)(z)
\]
with the right-hand side uniformly bounded in the observation label. Together with the $L^2$ bound for $V$, this justifies integration of the chain rule with respect to $\mu_0$ and in time. We obtain
\begin{align*}
&\int_{\mathbb R^{2d}}\psi(t,x,v)\,\mu_t(\di x,\di v)
-\int_{\mathbb R^{2d}}\psi(0,x,v)\,\mu_0(\di x,\di v)\\
&\qquad\qquad=\int_0^t\int_{\mathbb R^{2d}}
\Bigl[\partial_s\psi+v\cdot\nabla_x\psi
+L[\mu_s](x,v)\cdot\nabla_v\psi\Bigr]\,\mu_s(\di x,\di v)\,\di s.
\end{align*}
Thus $\mu_t$ is a global weak measure-valued solution of \eqref{A-3}. \newline

Finally, we suppose $\mu_0=f^{\mathrm{in}}\,\di x\,\di v$ with $f^{\mathrm{in}}\in L^1\cap L_+^\infty$. We first justify the local Lipschitz regularity of the Eulerian alignment field. Fix $T>0$ and a compact set $K\subset\mathbb R^d$. By the spatial second-moment bound, one can choose $R=R_{K,T}>0$ such that
\[
\inf_{0\le t\le T}\mu_t\bigl(B_R\times\mathbb R^d\bigr)\ge\frac12.
\]
Hence, with
\[
\mathcal Z_t(x):=\int\phi(|x-x_{\star}|)\,\mu_t(\di x_{\star},\di v_{\star}),
\]
monotonicity gives
\[
\inf_{0\le t\le T}\inf_{x\in K}\mathcal Z_t(x)
\ge\frac12\phi\bigl(R+\sup_{x\in K}|x|\bigr)>0.
\]
Moreover, if $x,y\in K$ and $|x-y|\le1$, then \eqref{C-4} and the reverse triangle inequality imply
\[
|\phi(|x-x_{\star}|)-\phi(|y-x_{\star}|)|
\le L_\phi e^{L_\phi}|x-y|\,\phi(|y-x_{\star}|).
\]
Using the finite first velocity moment, which follows from \eqref{C-8}, this estimate applies both to $\mathcal Z_t$ and to
\[
\mathcal N_t(x):=\int\phi(|x-x_{\star}|)v_{\star}\,\mu_t(\di x_{\star},\di v_{\star}).
\]
Since $\bar v_{\mu_t}=\mathcal N_t/\mathcal Z_t$ and $\mathcal Z_t$ is uniformly bounded away from zero on $[0,T]\times K$ and $K$ is arbitrary, it follows that $x\mapsto\bar v_{\mu_t}(x)$ is locally Lipschitz, uniformly on compact time intervals. Hence the phase-space vector field
\[
\mathbf b_t(x,v):=\bigl(v,\kappa(\bar v_{\mu_t}(x)-v)\bigr)
\]
is locally Lipschitz in $(x,v)$. Its divergence in phase space is
\[
\operatorname{div}_{x,v}\mathbf b_t=-\kappa d.
\]
By the standard theory of flows generated by time-dependent locally Lipschitz vector fields, the associated flow is locally bi-Lipschitz. Moreover, Liouville's formula gives
\[
\frac{\di}{\di t}J_t(z)
=
\bigl(\operatorname{div}_{x,v}\mathbf b_t\bigr)(\Phi_t(z))J_t(z)
=
-\kappa d\,J_t(z),
\qquad
J_0(z)=1,
\]
and therefore $J_t(z)=e^{-\kappa dt}$. The change-of-variables formula gives
\[
f_t(\Phi_t(z))=e^{\kappa dt}f^{\mathrm{in}}(z),
\qquad
\|f_t\|_{L^\infty}\le e^{\kappa dt}\|f^{\mathrm{in}}\|_{L^\infty}.
\]
Thus $\mu_t$ remains absolutely continuous. To justify the strong $L^1$ continuity in time, define
\[
P_tg:=e^{\kappa dt}g\circ\Phi_t^{-1}.
\]
Then $\|P_tg\|_{L^1}=\|g\|_{L^1}$. For $g\in C_c^\infty(\mathbb R^{2d})$, continuity of
$\Phi_t^{\pm1}$ on compact sets and the change-of-variables formula imply
$\|P_tg-P_sg\|_{L^1}\to0$ as $t\to s$. Approximating
$f^{\mathrm{in}}$ in $L^1$ by functions in $C_c^\infty$ and using the $L^1$-isometry of $P_t$ yields
\[
f\in C([0,T];L^1(\mathbb R^{2d}))
\cap L_+^\infty([0,T]\times\mathbb R^{2d})
\]
for every finite $T$. The density $f$ satisfies the weak formulation in
Definition~\ref{D2.1}. This completes the proof.

\section{Weak support flocking}\label{sec:5}
\setcounter{equation}{0}
In this section, we provide a proof of Theorem~\ref{T3.1}. Throughout this section, the push-forward 
$\mu_t=(X(t,\cdot),V(t,\cdot))_{\#}\mu_0$ of the initial measure $\mu_0$ denotes a global Lagrangian weak
solution appearing in Proposition~\ref{P3.1} for a fixed $p\ge1$. Recall that 
\begin{align*}
\begin{aligned}
\mathcal{X}_p(t)
&:=
\left(
\iint_{\mathbb R^{4d}}
|X(t,z)-X(t,z_{\star})|^p
\,\mu_0(\di z)\mu_0(\di z_{\star})
\right)^{1/p}, \\
\mathcal D_v(t) &:=
\operatorname*{ess\,sup}_{z,z_{\star}}
|V(t,z)-V(t,z_{\star})|.
\end{aligned}
\end{align*}
As outlined in Theorem~\ref{T3.1}, we first construct a time-varying effective-region carrying at least one half
of the total mass. Second, the algebraic moderateness of $\phi$ converts
this effective region into a common lower bound for all normalized MT
kernels. Third, this common mass gives a Dobrushin-type contraction of the
velocity-support diameter, while $\dot X=V$ controls the growth of
$\mathcal X_p$. Finally, a nonlinear Lyapunov functional closes the two
estimates and yields uniform spatial cohesion and exponential velocity support
alignment.

\subsection{Effective region and common mass}\label{sec:5.1}
In this subsection, we begin with a purely geometric consequence of the pairwise spatial
$p$-moment. It replaces the bounded-support localization used in the
classical compact-support theory.

\begin{lemma}[Time-dependent half-mass effective region]\label{L5.1}
For $p \geq 1$, let $X:\mathbb R^{2d}\to\mathbb R^d$ be a measurable map such that $\mathfrak X_p[X]< \infty$. Then there exists a label $z_c\in\mathbb R^{2d}$ such that
\begin{align}
\begin{aligned} \label{E-1}
& (i)~\int_{\mathbb R^{2d}}
|X(z_{\star})-X(z_c) |^p\,\mu_0(\di z_{\star})
\le \mathfrak X_p[X]^p. \\
& (ii)~\mu_0 \Big( \Big \{ z_{\star}\in\mathbb R^{2d}:
|X(z_{\star})- X(z_c) |\le 2^{1/p}\mathfrak X_p[X]  \Big \} \Big ) \ge\frac12.
\end{aligned}
\end{align}
\end{lemma}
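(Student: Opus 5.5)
We argue by averaging, followed by Chebyshev's inequality. Here $\mathfrak X_p[X]$ denotes the $L^p$ analogue of \eqref{D-13}:
\[
\mathfrak X_p[X]^p=\iint_{\mathbb R^{4d}}|X(z)-X(z_{\star})|^p\,\mu_0(\di z)\mu_0(\di z_{\star}).
\]
Define the one-point function
\[
F(z):=\int_{\mathbb R^{2d}}|X(z_{\star})-X(z)|^p\,\mu_0(\di z_{\star})\in[0,\infty].
\]
By Tonelli's theorem, $F$ is measurable and
\[
\int_{\mathbb R^{2d}}F(z)\,\mu_0(\di z)=\mathfrak X_p[X]^p<\infty.
\]
Since $\mu_0$ is a probability measure, $F$ cannot exceed its mean $\mu_0$-almost everywhere. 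Hence the set $\{z:F(z)\le \mathfrak X_p[X]^p\}$ has positive $\mu_0$-measure. In particular it is nonempty, and we pick $z_c$ in it. This gives (i).

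For (ii), first dispose of the degenerate case $\mathfrak X_p[X]=0$. Then $F(z_c)=0$, so $X(z_{\star})=X(z_c)$ for $\mu_0$-a.e.\ $z_{\star}$. The set in (ii) therefore has full measure $1\ge\frac12$.

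If $\mathfrak X_p[X]>0$, set $R:=2^{1/p}\mathfrak X_p[X]$. Markov's inequality applied to $|X(\cdot)-X(z_c)|^p$, together with (i), gives
\[
\mu_0\bigl(\{z_{\star}:|X(z_{\star})-X(z_c)|>R\}\bigr)\le \frac{F(z_c)}{R^p}\le \frac{\mathfrak X_p[X]^p}{2\,\mathfrak X_p[X]^p}=\frac12.
\]
Taking complements yields (ii).

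The only delicate point is the existence of a genuine label $z_c$ in the first step, as opposed to a bound holding merely on average. This is settled by the observation that a nonnegative integrable function cannot strictly exceed its mean on a set of full measure. The label $z_c$ is fixed once $X$ is given; when $X=X(t)$ it may depend on $t$, which is why the effective region is time-varying.
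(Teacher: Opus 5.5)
Your proof is correct and follows the paper's argument. The paper likewise uses Tonelli to get $\int F\,d\mu_0=\mathfrak X_p[X]^p$, picks a label $z_c$ at which $F$ does not exceed its mean, and applies Chebyshev at radius $2^{1/p}\mathfrak X_p[X]$. Your handling of the degenerate case $\mathfrak X_p[X]=0$ is slightly more careful than the paper's: you choose $z_c$ with $F(z_c)=0$ and note that (ii) then holds with full measure.
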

\begin{proof} 
\noindent (i)~We set 
\[
F_X(z):=
\int_{\mathbb R^{2d}}
|X(z)-X(z_{\star})|^p\,\mu_0(\di z_{\star}).
\]
Since the integrand is nonnegative, Tonelli's theorem gives
\[
\begin{aligned}
	\int_{\mathbb R^{2d}}F_X(z)\,\mu_0(\di z)
	&=
	\int_{\mathbb R^{2d}}
	\int_{\mathbb R^{2d}}
	|X(z)-X(z_{\star})|^p
	\,\mu_0(\di z_{\star})\,\mu_0(\di z)
	\\
	&=
	\iint_{\mathbb R^{4d}}
	|X(z)-X(z_{\star})|^p\,
	\mu_0(\di z)\mu_0(\di z_{\star})
	=
	\mathfrak X_p[X]^p.
\end{aligned}
\]
\noindent $\bullet$~{\bf Case 1}:~Suppose that
\[ 0 < \mathfrak X_p[X]<\infty. \]
This yields 
\[
F_X(z)<\infty \quad
\text{for }\mu_0\text{-almost every }z.
\]
Moreover, because $\mu_0$ is a probability measure, there exists
$z_c$ in this full-measure set such that
\[
F_X(z_c)\le \mathfrak X_p[X]^p.
\]
Indeed, if this were not the case, then
\[
F_X(z)>\mathfrak X_p[X]^p
\qquad
\text{for }\mu_0\text{-almost every }z,
\]
and therefore
\[
\int_{\mathbb R^{2d}}F_X(z)\,\mu_0(\di z)
>
\mathfrak X_p[X]^p
\int_{\mathbb R^{2d}}\mu_0(\di z)
=
\mathfrak X_p[X]^p.
\]
This contradicts
\[
\int_{\mathbb R^{2d}}F_X(z)\,\mu_0(\di z)
=
\mathfrak X_p[X]^p.
\]
Hence there exists $z_c\in\mathbb R^{2d}$ such that
\[
\int_{\mathbb R^{2d}}
|X(z_c)-X(z_{\star})|^p\,
\mu_0(\di z_{\star})
\le
\mathfrak X_p[X]^p,
\]
which verifies $\eqref{E-1}_1.$ \newline

\noindent $\bullet$~{\bf Case 2}:~Suppose that 
\[  \mathfrak X_p[X]=0. \]
Then this  implies
\[ X(z_{\star})= X(z_c) \quad \mbox{for $\mu_0$-almost every $z_{\star}$}, \]
and therefore $\eqref{E-1}_1$ is immediate. \newline

\noindent (ii)~Suppose that
$\mathfrak X_p[X]>0$. By Chebyshev's inequality,
\[
\mu_0\bigl(
|X-X(z_c)|>2^{1/p}\mathfrak X_p[X]
\bigr) \le
\frac{1}{2\mathfrak X_p[X]^p}
\int_{\mathbb R^{2d}}|X-X(z_c)|^p\,\di\mu_0
\le\frac12.
\]
This verifies the second estimate.
\end{proof}
Recall that for a configuration $X$,  we have the normalized kernel density
\begin{equation}\label{E-2}
k_X(z,z_{\star})
:=
\frac{\phi(|X(z)-X(z_{\star})|)}{\mathcal Z_X(z)}, \quad \mathscr T_Xh(z)
=
\int_{\mathbb R^{2d}}
k_X(z,z_{\star})h(z_{\star})\,\mu_0(\di z_{\star}).
\end{equation}
In the next lemma, we study estimates on common-mass minorization and Dobrushin contraction which follows from the key consequence of the normalized MT structure. We set an effective region ${\mathcal E}_X$:
\[
{\mathcal E}_X := \Big \{ z_{\star}\in\mathbb R^{2d}:
|X(z_{\star})- X(z_c) |\le 2^{1/p}\mathfrak X_p[X]  \Big \}.
\]
Then, it follows from \eqref{E-1} that 
\[ \mu_0({\mathcal E}_X) \geq \frac{1}{2}.
\]
\begin{lemma}\label{L5.2}
Suppose that parameters, kernel and $X: {\mathbb R}^{2d} \to {\mathbb R}^d$ satisfy
\[  \alpha \in [0, 1], \quad p \geq 1, \quad \phi~:~\mbox{$\alpha$-moderate}, \quad \mathfrak X_p[X]<\infty. \]
Then, the following assertions hold.
\begin{enumerate}
\item
For $\mu_0$-almost every $z$ and every $z_{\star}\in\mathcal E_X$ outside a $\mu_0$-null set, we have
\begin{equation}\label{E-3}
k_X(z,z_{\star})
\ge
\frac{1}{C_{\phi}}
\min\bigl\{6^{-\alpha},d_{\phi}\bigr\} \bigl(1+\mathfrak X_p[X]\bigr)^{-\alpha},
\end{equation}
where $C_{\phi}$ is a positive constant appearing in \eqref{C-2}.
\vspace{0.2cm}
\item
Any two normalized kernels have a common submeasure of mass at least $\eta_X$:
\[ \eta_X := \frac{1}{2C_{\phi}}
\min\bigl\{6^{-\alpha},d_{\phi}\bigr\}  \bigl(1+\mathfrak X_p[X]\bigr)^{-\alpha}. \]
Moreover, for every bounded measurable
$h:\mathbb R^{2d}\to\mathbb R^m$, we have
\begin{equation}\label{E-4}
\operatorname*{ess\,sup}_{z,\widetilde z}
|\mathscr T_Xh(z)-\mathscr T_Xh(\widetilde z)|
\le
(1-\eta_X)
\operatorname*{ess\,sup}_{z_{\star},\widetilde z_{\star}}
|h(z_{\star})-h(\widetilde z_{\star})|.
\end{equation}
\end{enumerate}
\end{lemma}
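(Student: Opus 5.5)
The plan is to prove the pointwise lower bound \eqref{E-3} by bounding the numerator and the normalization $\mathcal Z_X(z)$ of $k_X(z,z_\star)$ separately. The comparison quantity is the distance of the observation point to the center label $z_c$ of Lemma~\ref{L5.1}. Then (2) follows from (1) by a standard Dobrushin coupling argument. Fix $z_c$ as in Lemma~\ref{L5.1} and write
\[
R:=2^{1/p}\mathfrak X_p[X],\qquad r:=|X(z)-X(z_c)|,\qquad a:=|X(z)-X(z_\star)|.
\]
For $z_\star\in\mathcal E_X$ the triangle inequality gives $a\le r+R$, so monotonicity of $\phi$ gives $\phi(a)\ge\phi(r+R)$.

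\textbf{Upper bound for $\mathcal Z_X(z)$.} For an arbitrary $w$, put $b:=|X(z)-X(w)|$ and $s:=|X(w)-X(z_c)|$, so that $r\le b+s$. By \eqref{C-2} and monotonicity,
\[
\phi(b)\le C_\phi(1+s)^\alpha\phi(b+s)\le C_\phi(1+s)^\alpha\phi(r).
\]
Integrating in $w$ against $\mu_0$ and using concavity of $t\mapsto t^\alpha$ for $\alpha\in[0,1]$ (Jensen), followed by H\"older and \eqref{E-1}(i), gives
\[
\mathcal Z_X(z)\le C_\phi\phi(r)\Bigl(1+\int|X(w)-X(z_c)|\,\mu_0(\di w)\Bigr)^\alpha\le C_\phi\phi(r)\bigl(1+\mathfrak X_p[X]\bigr)^\alpha .
\]
We also keep the trivial bound $\mathcal Z_X(z)\le 1$, which holds since $\phi\le1$.

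\textbf{Two cases for \eqref{E-3}.}
\begin{itemize}
\item If $r\le 2R$, then $a\le 3R$. Applying \eqref{C-2} with base point $0$ gives $\phi(3R)\ge C_\phi^{-1}(1+3R)^{-\alpha}$. Since $1+3\cdot2^{1/p}\mathfrak X_p[X]\le 6(1+\mathfrak X_p[X])$, the trivial bound on $\mathcal Z_X$ yields $k_X\ge C_\phi^{-1}6^{-\alpha}(1+\mathfrak X_p[X])^{-\alpha}$.
\item If $r>2R$, then $r+R\le 3r/2$. Hence \eqref{C-3} gives $\phi(a)\ge\phi(3r/2)\ge d_\phi\phi(r)$. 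Dividing by the upper bound for $\mathcal Z_X$, the factor $\phi(r)$ cancels and $k_X\ge d_\phi C_\phi^{-1}(1+\mathfrak X_p[X])^{-\alpha}$.
\end{itemize}
Taking the minimum of the two cases gives \eqref{E-3}. The only measure-theoretic care needed is to discard the $\mu_0$-null sets on which $X$ or $F_X$ misbehave. I expect the main obstacle to be this far-field case. A naive bound $\mathcal Z_X\le 1$ loses the factor $\phi(r)$, which is unbounded in the observation label. The comparison of every $\phi(b)$ with $\phi(r)$ through the moderation condition is what removes the dependence on $z$.

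\textbf{Dobrushin contraction (2).} Set $c:=C_\phi^{-1}\min\{6^{-\alpha},d_\phi\}(1+\mathfrak X_p[X])^{-\alpha}$. By (1), for a.e. $z$ the measure
\[
\sigma_z:=\bigl(k_X(z,\cdot)-c\mathbf 1_{\mathcal E_X}\bigr)\mu_0
\]
is nonnegative. Its total mass is $m:=1-c\mu_0(\mathcal E_X)\le 1-\eta_X$, and $m$ is independent of $z$. Thus each kernel contains the common submeasure $c\mathbf 1_{\mathcal E_X}\mu_0$, whose mass is at least $c/2=\eta_X$ because $\mu_0(\mathcal E_X)\ge\frac12$. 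For bounded $h$ the common part cancels, so
\[
\mathscr T_Xh(z)-\mathscr T_Xh(\widetilde z)=\int h\,\di\sigma_z-\int h\,\di\sigma_{\widetilde z}.
\]
If $m=0$ there is nothing to prove. Otherwise, couple the two measures by $\sigma_z\otimes\sigma_{\widetilde z}/m$ to write the difference as
\[
\frac1m\iint\bigl(h(w)-h(\widetilde w)\bigr)\,\sigma_z(\di w)\,\sigma_{\widetilde z}(\di\widetilde w).
\]
Its norm is at most $m\operatorname*{ess\,sup}|h(w)-h(\widetilde w)|$. This works componentwise-free for vector-valued $h$, and the ess sup is legitimate because $\sigma_z\ll\mu_0$. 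Taking the essential supremum over $z,\widetilde z$ then gives \eqref{E-4}.
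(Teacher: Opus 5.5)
Your proposal is correct and follows the paper's proof essentially step for step. It uses the same center label $z_c$ and the same near/far split; your threshold $r\le 2R$ in place of the paper's $2(1+2^{1/p}\mathfrak X_p[X])$ changes nothing. It also uses the same moderation-plus-Jensen upper bound on $\mathcal Z_X(z)$ to cancel $\phi(r)$ in the far field, and the same common-submeasure Dobrushin coupling for (2); the only difference there is that you keep the residual measures unnormalized, whereas the paper divides them by $1-\omega_X$.
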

\begin{proof}
\noindent (1)~By Lemma~\ref{L5.1}, we have
\begin{equation}\label{E-5}
\int_{\mathbb R^{2d}}|X(z_{\star})-X(z_c)|^p\,\mu_0(\di z_{\star})
\le  (\mathfrak X_p[X])^p,
\qquad
\mu_0(\mathcal E_X)\ge\frac12.
\end{equation}
Next, we estimate \eqref{E-2} in the near-field
and far-field regimes separately.  For $\mu_0$-almost every $z$ and every $z_{\star}\in\mathcal E_X$ outside a $\mu_0$-null set, we consider two cases:
\begin{align*}
\begin{aligned}
& |X(z)-X(z_c)| \le 2 (1+  2^{1/p}\mathfrak X_p[X]):~\mbox{near-field near $z_c$}, \\
& |X(z)-X(z_c)| > 2 (1+  2^{1/p}\mathfrak X_p[X]):~\mbox{far-field away from $z_c$}.
\end{aligned}
\end{align*}
\vspace{0.1cm}

\noindent $\bullet$~Case 1 (Near-field regime around $z_c$):~Suppose $z$ lies in the near-field regime around $z_c$ and $z_{\star}\in\mathcal E_X$. Then, we have
\[
|X(z)-X(z_c)| \le 2 (1+  2^{1/p}\mathfrak X_p[X]), \quad |X(z_{\star})- X(z_c) |\le 2^{1/p}\mathfrak X_p[X].
\]
These imply
\begin{align*}
\begin{aligned}
|X(z)-X(z_{\star})| &\le |X(z)-X(z_c)|+|X(z_{\star})- X(z_c)| \\
& \le  2 (1+  2^{1/p}\mathfrak X_p[X]) + 2^{1/p}\mathfrak X_p[X]  \le 3  (1+  2^{1/p}\mathfrak X_p[X]).
\end{aligned}
\end{align*}
Since $\phi$ is non-increasing and the denominator in
\eqref{E-2} is at most $1$, we obtain
\begin{equation} \label{E-6}
k_X(z,z_{\star})
\ge\phi \Big (3(1+  2^{1/p}\mathfrak X_p[X]) \Big).
\end{equation}
In \eqref{C-2}, we take 
\[ r=0 \quad \mbox{and} \quad s=3 (1+  2^{1/p}\mathfrak X_p[X]) \]
and use $\phi(0)=1$ to see
\begin{equation} \label{E-7}
\phi \Big(3(1+  2^{1/p}\mathfrak X_p[X]) \Big)
\ge
\frac{1}{C_{\phi}(1+3(1+  2^{1/p}\mathfrak X_p[X]))^{\alpha}}.
\end{equation}
Since $p\ge1$, $2^{1/p}\le2$, and hence
\begin{equation} \label{E-8}
1+3 (1+  2^{1/p}\mathfrak X_p[X])
=4+3  (2^{1/p}\mathfrak X_p[X])
\le4+6 \mathfrak X_p[X]
\le6(1+ \mathfrak X_p[X]).
\end{equation}
We combine \eqref{E-6}, \eqref{E-7} and \eqref{E-8} together with $0<\mathcal Z_X(z)\le1$ to find
\begin{equation}\label{E-9}
k_X(z,z_{\star})
\ge
\frac{6^{-\alpha}}{C_{\phi}}
(1+ \mathfrak X_p[X])^{-\alpha}.
\end{equation}
\vspace{0.2cm}

\noindent $\bullet$~Case 2 (Far-field regime away from $z_c$):~Suppose that $z$ belongs to a far-field regime away from $z_c$ and  $z_{\star}\in\mathcal E_X$:
\[
|X(z)-X(z_c)| > 2 (1+  2^{1/p}\mathfrak X_p[X]) \quad \mbox{and} \quad |X(z_{\star})- X(z_c) |\le 2^{1/p}\mathfrak X_p[X].
\]
These yield
\begin{align}
\begin{aligned} \label{E-10}
|X(z)-X(z_{\star})| &\le |X(z)-X(z_c)|+|X(z_{\star})- X(z_c)| \\
& \le |X(z)-X(z_c)|  + 2^{1/p}\mathfrak X_p[X] \le  \frac{3}{2} |X(z)-X(z_c) |.
\end{aligned}
\end{align}
Again we use the non-increasing property of $\phi$, \eqref{E-10}  and \eqref{C-3} to find 
\begin{equation}\label{E-11}
\phi(|X(z)-X(z_{\star})|)
\ge
\phi \Big( \frac{3}{2} |X(z)-X(z_c)| \Big)
\ge d_{\phi}\phi (|X(z)-X(z_c)|).
\end{equation}
We use triangle inequality,  non-increasing property of $\phi$ and \eqref{C-2} to see
\begin{align}
\begin{aligned} \label{E-12}
\frac{\phi(|X(z)-X(z_{\star})|)}{\phi( |X(z)-X(z_c) |)}
& \le
\frac{\phi(|X(z)-X(z_{\star})|)}{\phi(|X(z)-X(z_{\star})|+|X(z_{\star})-X(z_c)|)} \\
& \le
C_{\phi}(1+|X(z_{\star})-X(z_c)|)^{\alpha}.
\end{aligned}
\end{align}
We use \eqref{E-12} to get 
\begin{align}
\begin{aligned} \label{E-13}
 Z_X(z) &=\int_{\mathbb R^{2d}}
\phi(|X(z)-X(z_{\star})|)\,\mu_0(\di z_{\star}) \\
& \le
C_{\phi}\phi(|X(z)-X(z_c)|)
\int_{\mathbb R^{2d}}
(1+|X(z_{\star})-X(z_c)|)^{\alpha}\,\mu_0(\di z_{\star}).
\end{aligned}
\end{align}
Because $0\le\alpha\le1$ and $p\ge1$, Jensen's inequality and
\eqref{E-5} imply
\begin{align*}
\int(1+|X-X(z_c)|)^{\alpha}\,\di\mu_0
&\le
\left(1+\int|X-X(z_c)|\,\di\mu_0\right)^{\alpha}\\
&\le
\left(1+
\left(\int|X-X(z_c)|^p\,\di\mu_0\right)^{1/p}
\right)^{\alpha}
\le(1+ \mathfrak X_p[X])^{\alpha}.
\end{align*}
Combining this estimate with \eqref{E-11} and
\eqref{E-13} yields
\begin{equation}\label{E-14}
k_X(z,z_{\star})=\frac{\phi(|X(z)-X(z_{\star})|)}{\mathcal Z_X(z)}
\ge
\frac{d_{\phi}}{C_{\phi}}
(1+ \mathfrak X_p[X])^{-\alpha}.
\end{equation}
Finally, we combine \eqref{E-9} and \eqref{E-14} to get the desired estimate \eqref{E-3}.  \newline

\noindent (2)~Define the finite measure
\[
\sigma_X(\di z_{\star})
:=
\frac{1}{C_{\phi}}
\min\bigl\{6^{-\alpha},d_{\phi}\bigr\} (1+  \mathfrak X_p[X])^{-\alpha}
\mathbbm 1_{\mathcal E_X}(z_{\star})\,\mu_0(\di z_{\star}).
\]
By \eqref{E-3}, $\sigma_X$ is dominated by every
probability kernel
\[
P_X(z,\di z_{\star})
:=k_X(z,z_{\star})\mu_0(\di z_{\star}).
\]
Moreover, we use the definition of $\eta_X$ to see
\[
\omega_X:=\sigma_X(\mathbb R^{2d})
=\frac{1}{C_{\phi}}
\min\bigl\{6^{-\alpha},d_{\phi}\bigr\}(1+  \mathfrak X_p[X])^{-\alpha}\mu_0(\mathcal E_X)
\ge\eta_X.
\]
If $\omega_X=1$, then $\sigma_X\le P_X(z,\cdot)$ and both
measures have total mass one, so that
\[
P_X(z,\cdot)=\sigma_X
\]
for every observation label $z$. Hence $\mathscr T_Xh$ is constant
in $z$, and the relation \eqref{E-4} follows. Note that $\sigma_X\le P_X(z,\cdot)$ for every observation label $z$,
the measure $P_X(z,\cdot)-\sigma_X$ is nonnegative and has total mass
$1-\omega_X$. \newline

\noindent If $\omega_X<1$, we may define
\[
P_z^r
:=
\frac{P_X(z,\cdot)-\sigma_X}{1-\omega_X},
\]
which is a probability measure. Therefore, we have
\[
P_X(z,\cdot)
=
\sigma_X+(1-\omega_X)P_z^r,
\]
and similarly
\[
P_X(\widetilde z,\cdot)
=
\sigma_X+(1-\omega_X)P_{\widetilde z}^r.
\]
Therefore, we have
\[
\mathscr T_Xh(z)-\mathscr T_Xh(\widetilde z)
=
(1-\omega_X)
\left(
\int h\,\di P_z^r
-
\int h\,\di P_{\widetilde z}^r
\right).
\]
Since $P_z^r$ and $P_{\widetilde z}^r$ are probability measures,
\[
	\left|
	\int h\,\di P_z^r
	-
	\int h\,\di P_{\widetilde z}^r
	\right|
	=
	\left|
	\iint
	\bigl(h(z_\star)-h(\widetilde z_\star)\bigr)
	\,P_z^r(\di z_\star)
	P_{\widetilde z}^r(\di \widetilde z_\star)
	\right| \le
	\operatorname*{ess\,sup}_{z_\star,\widetilde z_\star \in {\mathbb R}^{2d}}
	|h(z_\star)-h(\widetilde z_\star)|.
\]
Thus, we have
\[
|\mathscr T_Xh(z)-\mathscr T_Xh(\widetilde z)|
\le
(1-\omega_X)
\operatorname*{ess\,sup}_{z_\star,\widetilde z_\star}
|h(z_\star)-h(\widetilde z_\star)|.
\]
Since $\omega_X\ge\eta_X$, we conclude that
\[
|\mathscr T_Xh(z)-\mathscr T_Xh(\widetilde z)|
\le
(1-\eta_X)
\operatorname*{ess\,sup}_{z_\star,\widetilde z_\star}
|h(z_\star)-h(\widetilde z_\star)|.
\]
Finally, we take the essential supremum over $z,\widetilde z$ to get the desired estimate \eqref{E-4}.
\end{proof}
\begin{remark}\label{R5.1}
Note that we provide a moment counterpart of the ``\textit{active sets}'' introduced in \cite{MT}. The main difference is that, instead of using  the compactness of the spatial support, the active region is controlled by the pairwise spatial moment.
\end{remark}
\subsection{Velocity contraction and spatial growth}\label{sec:5.2}
In this subsection, we derive a system of dissipative differential inequalities (SDDI) for $\mathcal X_p$ and $\mathcal D_v$:
\begin{equation} \label{E-15}
\begin{cases}
\displaystyle D^+\mathcal X_p \le\mathcal D_v, \quad \mbox{a.e.}~~t > 0, \vspace{6pt}\\
\displaystyle D^+\mathcal D_v
\le
- \frac{\kappa}{2C_{\phi}}
\min\bigl\{6^{-\alpha},d_{\phi}\bigr\}
\bigl(1+\mathcal X_p \bigr)^{-\alpha}
\mathcal D_v.
\end{cases}
\end{equation}
In the sequel, we derive the differential inequalities in \eqref{E-15} one by one. First, we derive the contraction property of the velocity diameter. 
\begin{lemma}
\label{L5.3}
Suppose that system parameters, communication weight function and initial datum satisfy 
\[ \kappa > 0, \quad \alpha \in [0, 1], \quad \phi:~\mbox{$\alpha$-moderate},  \quad p \in [1, \infty), \quad \mathcal X_p(0)<\infty, \quad\mathcal D_v(0)<\infty, \]
and let $\mu_t=(X(t,\cdot),V(t,\cdot))_{\#}\mu_0$ be a global Lagrangian weak
solution to \eqref{A-3}. Then, ${\mathcal D}_v$ satisfies $\eqref{E-15}_2$:
\begin{equation} \label{E-16}
D^+\mathcal D_v
\le
- \frac{\kappa}{2C_{\phi}}
\min\bigl\{6^{-\alpha},d_{\phi}\bigr\}
\bigl(1+\mathcal X_p \bigr)^{-\alpha}
\mathcal D_v, \quad \mbox{a.e.}~t > 0.
\end{equation}
\end{lemma}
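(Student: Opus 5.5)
The proof works directly on the Lagrangian characteristics. Fix $t>0$ and two labels $z,\tilde z$. The characteristic relation \eqref{D-6} gives
\[
\frac{\di}{\di t}\bigl(V(t,z)-V(t,\tilde z)\bigr)=-\kappa\bigl(V(t,z)-V(t,\tilde z)\bigr)+\kappa\bigl(\mathscr T_{X(t)}V(t)(z)-\mathscr T_{X(t)}V(t)(\tilde z)\bigr).
\]
Since $\mathcal D_v(0)<\infty$, the convex-average structure (\eqref{D-3}, and $\mathscr T_X$ maps into the convex hull) keeps $V(t,\cdot)$ essentially bounded. Hence $h=V(t,\cdot)$ is an admissible bounded observable in Lemma~\ref{L5.2}(2). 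Applying \eqref{E-4} with $X=X(t)$ yields
\[
|\mathscr T_{X(t)}V(t)(z)-\mathscr T_{X(t)}V(t)(\tilde z)|\le(1-\eta_{X(t)})\mathcal D_v(t),
\qquad
\eta_{X(t)}\ge\frac{1}{2C_\phi}\min\{6^{-\alpha},d_\phi\}(1+\mathcal X_p(t))^{-\alpha},
\]
where $\mathfrak X_p[X(t)]=\mathcal X_p(t)$.

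Because $\mathcal D_v$ is an essential supremum over pairs, I avoid differentiating a supremum directly and use a difference-quotient argument. First I note that $V(\cdot,z)$ is Lipschitz in time uniformly in $z$ on bounded intervals, with $|\dot V|\le 2\kappa\sup_s\mathcal D_v(s)$ after subtracting a fixed reference (as in Remark~\ref{R2.2}). This makes $\mathcal D_v$ Lipschitz, so $D^+\mathcal D_v$ is finite and derivatives exist a.e.

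Then, for $h>0$ small, I write
\[
V(t+h,z)-V(t+h,\tilde z)=(1-\kappa h)\bigl(V(t,z)-V(t,\tilde z)\bigr)+\kappa h\bigl(\mathscr T_{X(t)}V(t)(z)-\mathscr T_{X(t)}V(t)(\tilde z)\bigr)+o(h).
\]
The $o(h)$ term must be uniform in $(z,\tilde z)$. I obtain this uniformity from the continuity of $s\mapsto\mathscr T_{X(s)}V(s)$ in $L^\infty(\mu_0)$. That continuity follows from Lemma~\ref{L4.4} together with $\|X(s)-X(t)\|_{L^\infty}\le |s-t|\,\sup\|V\|_{L^\infty}$, using the bounded-velocity regime so that $V$ is in $L^\infty$ after centering. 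Taking absolute values, the triangle inequality and $1-\kappa h\ge0$ give
\[
|V(t+h,z)-V(t+h,\tilde z)|\le(1-\kappa h)\mathcal D_v(t)+\kappa h(1-\eta_{X(t)})\mathcal D_v(t)+o(h).
\]
Taking the essential supremum over $(z,\tilde z)$, subtracting $\mathcal D_v(t)$, dividing by $h$, and taking $\limsup_{h\downarrow0}$ gives
\[
D^+\mathcal D_v(t)\le-\kappa\eta_{X(t)}\mathcal D_v(t).
\]
Inserting the lower bound on $\eta_{X(t)}$ then yields \eqref{E-16}.

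The main obstacle is justifying this uniform first-order expansion. Specifically, I need the remainder $o(h)$ to be independent of the labels, and the essential supremum must interact correctly with $\mu_0$-null sets. I would handle this through the integral form
\[
V(t+h)-V(t)=\int_t^{t+h}\kappa\bigl(\mathscr T_{X(s)}V(s)-V(s)\bigr)\di s
\]
in $L^\infty(\mu_0)$. The error is then bounded by $\kappa\int_t^{t+h}\|\mathscr T_{X(s)}V(s)-\mathscr T_{X(t)}V(t)\|_{L^\infty}+\|V(s)-V(t)\|_{L^\infty}\,\di s=o(h)$, using the continuity above. The centering by a constant, for instance $m_0$ or a fixed velocity in the support, is harmless because $\mathscr T_X$ preserves constants.
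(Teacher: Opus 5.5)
Your argument is correct, but it passes to the Dini derivative differently from the paper.

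\textbf{Your route.} You freeze the averaging operator at time $t$ and write the Euler-type expansion $V(t+h)=(1-\kappa h)V(t)+\kappa h\,\mathscr T_{X(t)}V(t)+o(h)$ in $L^\infty(\mu_0)$. You then apply the Dobrushin contraction of Lemma~\ref{L5.2} only at that single time. The price is that the remainder must be uniform in the labels, so you must prove $L^\infty(\mu_0)$-continuity of $s\mapsto\mathscr T_{X(s)}V(s)$. You obtain this from the stability Lemma~\ref{L4.4}, and hence from the log-Lipschitz hypothesis \eqref{C-4}, together with $\|X(s)-X(t)\|_{L^\infty}\le|s-t|\sup\|V\|_{L^\infty}$. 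That bound is legitimate here because $\mathcal D_v(0)<\infty$ makes $V$ essentially bounded.

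\textbf{The paper's route.} The paper keeps the exact Duhamel formula \eqref{E-18} on $[s,t]$ and applies the contraction at every intermediate time $\tau$ under the integral. This produces the closed scalar inequality \eqref{E-19}. The limit $t\downarrow s$ then only needs continuity of the scalar functions $\eta(\tau)$ and $\mathcal D_v(\tau)$. No regularity of $\tau\mapsto\mathscr T_{X(\tau)}$ and no use of \eqref{C-4} is required, so the ``main obstacle'' you identify never arises.

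\textbf{How close the two are.} Your integral form of the increment is one step away from the paper's proof. Use the integrating factor $e^{-\kappa(t-\tau)}$, so that all coefficients are nonnegative. Then bound the pairwise difference of $\mathscr T_{X(\tau)}V(\tau)$ by $(1-\eta(\tau))\mathcal D_v(\tau)$ at each $\tau$, instead of comparing it with its value at time $t$. This recovers the paper's argument.

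\textbf{What each buys.} Your approach gives a transparent pointwise-in-time expansion. The paper's is more economical in hypotheses and would survive for kernels lacking log-Lipschitz regularity.
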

\begin{proof}
At time $t$, we apply Lemma~\ref{L5.2} to the configuration
$X(t,\cdot)$. Then, we obtain
\begin{equation}\label{E-17}
\operatorname*{ess\,sup}_{z, {\widetilde z} \in {\mathbb R}^{2d}}
\Big |(\mathscr T_{X(t)}V(t,\cdot))(z)
-(\mathscr T_{X(t)}V(t,\cdot))(\widetilde z) \Big |
\le(1-\eta(t))\mathcal D_v(t),
\end{equation}
where $\eta$ is defined as follows:
\[
\eta(t)
:= \frac{1}{2C_{\phi}}
\min\bigl\{6^{-\alpha},d_{\phi}\bigr\}
(1+\mathcal X_p(t))^{-\alpha}.
\]
Note that the equations for velocity trajectory can be written as
\[
\dot V(t,z)+\kappa V(t,z)
=\kappa\mathscr T_{X(t)}V(t,\cdot)(z).
\]
For $0\le s<t$ and $z \in {\mathbb R}^{2d}$, we have
\begin{equation} \label{E-18}
V(t,z)
=e^{-\kappa(t-s)}V(s,z)
+\kappa\int_s^t
 e^{-\kappa(t-\tau)}
(\mathscr T_{X(\tau)}V(\tau,\cdot))(z)\,\di\tau.
\end{equation}
Now, we take pairwise differences together with the equation for $V(t,\tilde z)$, the essential supremum, and use
\eqref{E-17} to obtain
\begin{align}
\mathcal D_v(t)
\le
e^{-\kappa(t-s)}\mathcal D_v(s)+
\kappa\int_s^t e^{-\kappa(t-\tau)}
(1-\eta(\tau))\mathcal D_v(\tau)\,\di\tau.
\label{E-19}
\end{align}
The same characteristic identity for $X$ and Minkowski's inequality give locally
\[
|\mathcal X_p(t)-\mathcal X_p(s)|
\le
\mathcal D_v(0)|t-s|.
\]
Hence $\mathcal X_p$ is locally Lipschitz and $\eta$ is continuous. Moreover, we use 
\[
|\mathcal D_v(t)-\mathcal D_v(s)|
\le 2\kappa\mathcal D_v(0)|t-s|
\]
to see $\mathcal D_v$ is locally Lipschitz  continuous.\newline 

\noindent For $t>s$, we subtract $\mathcal D_v(s)$ from both sides of
\eqref{E-19} and divide by $t-s$ to obtain
\begin{align}
\begin{aligned} \label{E-20}
	\frac{\mathcal D_v(t)-\mathcal D_v(s)}{t-s}\le
	\frac{e^{-\kappa(t-s)}-1}{t-s}\,
	\mathcal D_v(s)+
	\frac{\kappa}{t-s}
	\int_s^t
	e^{-\kappa(t-\tau)}
	(1-\eta(\tau))\mathcal D_v(\tau)\,\di\tau.
\end{aligned}
\end{align}
Note that the coefficient in the right-hand side of  \eqref{E-20} becomes 
\begin{equation} \label{E-21}
\frac{e^{-\kappa(t-s)}-1}{t-s}
\longrightarrow -\kappa, \quad \mbox{as}~t\downarrow s.
\end{equation}
On the other hand, since both $\eta$ and $\mathcal D_v$ are continuous,
\[
e^{-\kappa(t-\tau)}
(1-\eta(\tau))\mathcal D_v(\tau)
\longrightarrow
(1-\eta(s))\mathcal D_v(s) \quad \mbox{uniformly for $\tau\in[s,t]$ as $t\downarrow s$}.
\]
Therefore, we have
\begin{equation} \label{E-22}
\frac{1}{t-s}
\int_s^t
e^{-\kappa(t-\tau)}
(1-\eta(\tau))\mathcal D_v(\tau)\,\di\tau
\longrightarrow
(1-\eta(s))\mathcal D_v(s).
\end{equation}
In \eqref{E-20}, we take the upper right Dini derivative using \eqref{E-21} and \eqref{E-22} to find the desired estimate:
\[
	D^+\mathcal D_v(s)
	:=
	\limsup_{t\downarrow s}
	\frac{\mathcal D_v(t)-\mathcal D_v(s)}{t-s} \le
	-\kappa\mathcal D_v(s)
	+
	\kappa(1-\eta(s))\mathcal D_v(s)=
	-\kappa\eta(s)\mathcal D_v(s).
\]
\end{proof}
\begin{lemma}  \label{L5.4}
Under the same setting as in Lemma \ref{L5.3}, the following assertions hold.
\begin{align}
\begin{aligned} \label{E-23}
& (i)~\mathcal X_p(t)
\le
\mathcal X_p(s)
+
\int_s^t\mathcal D_v(\tau)\,\di\tau, \quad  \mbox{for every $0\le s\le t$}. \\
& (ii)~\mathcal X_p~ \mbox{is locally Lipschitz and} \quad  D^+\mathcal X_p(t)\le\mathcal D_v(t)
\qquad\text{for all }t\ge0.
\end{aligned}
\end{align}
\end{lemma}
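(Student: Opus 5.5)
The plan is to work directly from the characteristic identity $\dot X = V$ in its integrated form:
\[
X(t,z)-X(t,z_{\star}) = X(s,z)-X(s,z_{\star}) + \int_s^t \bigl(V(\tau,z)-V(\tau,z_{\star})\bigr)\,\di\tau,
\]
valid for $\mu_0$-a.e.\ $z$ and $0\le s\le t$. I will view $(z,z_{\star})\mapsto X(t,z)-X(t,z_{\star})$ as an element of $L^p(\mu_0\otimes\mu_0;\mathbb R^d)$, so that $\mathcal X_p(t)$ is exactly its norm.

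Applying Minkowski's inequality in $L^p(\mu_0\otimes\mu_0)$, in its integral form for the time integral, gives
\[
\mathcal X_p(t)\le \mathcal X_p(s)+\int_s^t \Bigl(\iint |V(\tau,z)-V(\tau,z_{\star})|^p\,\mu_0(\di z)\mu_0(\di z_{\star})\Bigr)^{1/p}\di\tau .
\]
Since $\mu_0\otimes\mu_0$ is a probability measure, the inner $L^p$ norm is at most the essential supremum, which is $\mathcal V_p(\tau)\le \mathcal D_v(\tau)$. This yields (i). The one technical point is measurability of the integrand in $(\tau,z,z_{\star})$, needed for Minkowski's integral inequality. It follows from the Lagrangian construction in Proposition~\ref{P3.1}: $U\in C_{\rm loc}([0,\infty);L^\infty(\mu_0))$ and $V=m_0+e^{-\kappa t}q_0+U$.

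For (ii), I first note that $\mathcal D_v(\tau)\le \mathcal D_v(0)$ for all $\tau$. This follows from Lemma~\ref{L5.3}, or more elementarily from \eqref{E-19} via a Grönwall argument, since $1-\eta\le1$. Then (i) gives $\mathcal X_p(t)-\mathcal X_p(s)\le \mathcal D_v(0)(t-s)$.

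For the reverse inequality I apply the same Minkowski argument with the roles of $s$ and $t$ exchanged, writing $X(s)-X(s_\star)=X(t)-X(t_\star)-\int_s^t(\cdots)$. This gives $|\mathcal X_p(t)-\mathcal X_p(s)|\le \mathcal D_v(0)|t-s|$, so $\mathcal X_p$ is locally (in fact globally) Lipschitz.

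Finally, dividing (i) by $t-s$ and letting $t\downarrow s$ gives
\[
D^+\mathcal X_p(s)\le \limsup_{t\downarrow s}\frac1{t-s}\int_s^t\mathcal D_v(\tau)\,\di\tau=\mathcal D_v(s).
\]
This uses the continuity of $\mathcal D_v$, already established in the proof of Lemma~\ref{L5.3} through the Lipschitz bound $|\mathcal D_v(t)-\mathcal D_v(s)|\le2\kappa\mathcal D_v(0)|t-s|$. Because of this continuity, the Dini bound holds for every $t\ge0$, not merely almost every $t$.

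The main obstacle, though mild, is finiteness: Minkowski's inequality requires $\mathcal X_p(s)<\infty$ for each $s$. This follows inductively from $\mathcal X_p(0)<\infty$ together with the bounded increment $\int_0^s\mathcal D_v\le s\,\mathcal D_v(0)$, so no circularity arises.
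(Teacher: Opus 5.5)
Your proposal is correct and follows the paper's proof essentially step for step: the integrated characteristic identity, Minkowski's inequality in $L^p(\mu_0\otimes\mu_0)$ together with $\mathcal V_p\le\mathcal D_v$ for (i), and then the bound $\mathcal D_v(\tau)\le\mathcal D_v(0)$ combined with a Dini-derivative limit in (i) for (ii). Your additional details only make explicit what the paper leaves implicit. These are the reverse inequality for two-sided Lipschitz continuity, the non-circular Gr\"onwall route to $\mathcal D_v\le\mathcal D_v(0)$ via \eqref{E-19}, and the continuity of $\mathcal D_v$ that gives the Dini bound at every $t$.
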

\begin{proof}
\noindent (i)~
For $0\le s\le t$ and $z,z_{\star}\in\mathbb R^{2d}$, we have
\[
X(t,z)-X(t,z_{\star})
=
X(s,z)-X(s,z_{\star})
+
\int_s^t
\bigl(V(\tau,z)-V(\tau,z_{\star})\bigr)\,\di\tau.
\]
Again, we take $L^p(\mu_0\otimes\mu_0)$ norm of the above relation and use Minkowski's
inequality to get the first relation in \eqref{E-23}:
\begin{align*}
\mathcal X_p(t)
&\le
\mathcal X_p(s)
+
\int_s^t
\left(
\iint
|V(\tau,z)-V(\tau,z_{\star})|^p
\,\di\mu_0(z)\di\mu_0(z_{\star})
\right)^{1/p}\di\tau\\
&=
\mathcal X_p(s)+\int_s^t\mathcal V_p(\tau)\,\di\tau\le
\mathcal X_p(s)+\int_s^t\mathcal D_v(\tau)\,\di\tau.
\end{align*}

\noindent (ii)~ Since
$\mathcal D_v(\tau)\le\mathcal D_v(0)$ by
Lemma~\ref{L5.3}, $\mathcal X_p$ is locally
Lipschitz, and we take the upper right Dini derivative in
$\eqref{E-23}_1$ to derive the second relation $\eqref{E-23}_2$.
\end{proof}

\subsection{Proof of Theorem~\ref{T3.1}}
\label{sec:5.3}
Now we are ready to provide the proof of our second main result.  Note that with the help of Lemma \ref{L5.3} and Lemma \ref{L5.4},  we can reduce the flocking problem to a closed set of  scalar-valued differential inequalities. Before we move onto the proof, we present  estimates for the SDDI as follows. 

Let $f,g:[0,\infty)\to[0,\infty)$ be locally Lipschitz functions satisfying the following SDDI:
\begin{equation} 
\begin{cases} \label{E-24}
\displaystyle D^+f(t)\le g(t),\quad \mbox{a.e.}~t > 0, \vspace{6pt}\\
\displaystyle D^+g(t)\le-A(1+f(t))^{-\alpha}g(t),
\end{cases}
\end{equation}
for some $A>0$ and $0\le\alpha\le1$, and we also define
\begin{equation}\label{E-25}
\Psi_{\alpha}(r)
:=
\int_0^r(1+s)^{-\alpha}\,\di s
=
\begin{cases}
\displaystyle
\frac{(1+r)^{1-\alpha}-1}{1-\alpha},
&0\le\alpha<1,\\[2mm]
\log(1+r),&\alpha=1.
\end{cases}
\end{equation}
Then, we can derive estimates for $f$ and $g$ in the following lemma.
\begin{lemma} \label{L5.5}
Let $f,g:[0,\infty)\to[0,\infty)$ be locally Lipschitz functions satisfying \eqref{E-24}. Then, the following assertions hold. 
\begin{enumerate}
\item
The SDDI \eqref{E-24} is weakly stable in the sense that 
\begin{equation}\label{E-26}
g(t)+A\Psi_{\alpha}(f(t))
\le
g(0)+A\Psi_{\alpha}(f(0)),
\quad t\ge0.
\end{equation}
\item
The asymptotic flocking occurs:
\begin{align}
\begin{aligned} \label{E-27}
& (i)~ \sup_{0 \leq t < \infty} f(t)\le f_{\infty} := \begin{cases}
\displaystyle
\left(
(1+f(0))^{1-\alpha}
+\frac{(1-\alpha)g(0)}{A}
\right)^{\!\frac1{1-\alpha}}-1,
&0\le\alpha<1,\\[3mm]
\displaystyle
(1+f(0))\exp\!\left(\frac{g(0)}{A}\right)-1,
&\alpha=1.
\end{cases} \\
&(ii)~ g(t)
\le
g(0) \exp \Big[ - A(1+f_{\infty})^{-\alpha} t ], \quad t > 0.
\end{aligned}
\end{align}
\end{enumerate}
\end{lemma}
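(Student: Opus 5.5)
The plan is to use the Lyapunov functional $\mathcal L(t):=g(t)+A\Psi_\alpha(f(t))$ and show that it is non-increasing. Since $f$ and $g$ are locally Lipschitz and $\Psi_\alpha$ is $C^1$ with $\Psi_\alpha'(r)=(1+r)^{-\alpha}$, the composition $\Psi_\alpha\circ f$ is locally Lipschitz. Hence $\mathcal L$ is locally Lipschitz, so absolutely continuous, and $\mathcal L(t)-\mathcal L(0)=\int_0^t\mathcal L'(s)\,\di s$. At almost every $t$ all three of $f,g,\mathcal L$ are differentiable and the Dini inequalities \eqref{E-24} hold there; we may assume they hold a.e. for both lines. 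At such points the chain rule gives
\[
\mathcal L'(t)=g'(t)+A(1+f(t))^{-\alpha}f'(t)\le -A(1+f)^{-\alpha}g+A(1+f)^{-\alpha}g=0,
\]
using $f'\le g$ and the positivity of $A(1+f)^{-\alpha}$. Integrating gives \eqref{E-26}. If $\eqref{E-24}_2$ is only assumed at every $t$ rather than a.e., nothing changes, since only an a.e. statement is needed.

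For \eqref{E-27}(i), drop $g(t)\ge0$ in \eqref{E-26} to get $\Psi_\alpha(f(t))\le\Psi_\alpha(f(0))+g(0)/A$. Then invert the increasing bijection $\Psi_\alpha$ explicitly. For $\alpha<1$,
\[
(1+f(t))^{1-\alpha}\le(1+f(0))^{1-\alpha}+(1-\alpha)g(0)/A .
\]
For $\alpha=1$,
\[
\log(1+f(t))\le\log(1+f(0))+g(0)/A .
\]
Each yields the stated $f_\infty$. It is essential here that $\Psi_\alpha(r)\to\infty$ as $r\to\infty$, which holds exactly because $\alpha\le1$; this is where the restriction on $\alpha$ enters.

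For (ii), the bound $f\le f_\infty$ gives, at a.e. $t$,
\[
D^+g\le -\lambda g,\qquad \lambda:=A(1+f_\infty)^{-\alpha}.
\]
Apply the same absolutely-continuous argument to $e^{\lambda t}g(t)$: it is locally Lipschitz with a.e. derivative $e^{\lambda t}(g'+\lambda g)\le0$, so it is non-increasing. This yields $g(t)\le g(0)e^{-\lambda t}$.

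The only delicate point is the passage from Dini-derivative inequalities holding almost everywhere to monotonicity. A function with $D^+\le0$ only a.e. need not be monotone in general (Cantor-type counterexamples), so the argument must lean on local Lipschitz continuity of $f$ and $g$, which gives absolute continuity. At points of differentiability $D^+$ coincides with the derivative, and the chain rule for $\Psi_\alpha\circ f$ holds there. I expect this regularity bookkeeping, rather than any estimate, to be the main thing to state carefully.
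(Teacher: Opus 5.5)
Your proposal is correct and matches the paper's intended argument: the paper omits the proof and cites the standard Lyapunov-functional argument, namely monotonicity of $g+A\Psi_\alpha(f)$, explicit inversion of $\Psi_\alpha$ to bound $f$, and then a Gr\"onwall step for $g$ once $(1+f)^{-\alpha}\ge(1+f_\infty)^{-\alpha}$. Your passage from the a.e.\ Dini inequalities to true a.e.\ derivatives, using local Lipschitz continuity, absolute continuity and the pointwise chain rule for $\Psi_\alpha\circ f$, supplies exactly the regularity bookkeeping the paper leaves implicit.
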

\begin{proof}
Since the proof follows the standard argument in \cite{flocking4,Cucker,cucker2}, we omit it here.
\end{proof}
\vspace{0.5cm}

\noindent Now, we return to the proof of Theorem~\ref{T3.1}. First, we set 
\[
f(t):=\mathcal X_p(t),
\quad
g(t):=\mathcal D_v(t),
\quad
A=\frac{\kappa}{2C_{\phi}}
\min\bigl\{6^{-\alpha},d_{\phi}\bigr\}.
\]
Then, it follows from Lemma~\ref{L5.3} and Lemma~\ref{L5.4} that 
\[
D^+ f(t)\le g(t),
\quad
D^+g(t)
\le
-A(1+f(t))^{-\alpha} g(t).
\]
Then, we can apply Lemma~\ref{L5.5} to see that there exists a positive constant $ \mathcal X_{p,\infty}$ such that 
\begin{equation}\label{E-28}
\sup_{0 \leq t < \infty} \mathcal X_p(t)\le \mathcal X_{p,\infty}, \quad\text{for all }t\ge0.
\end{equation}
Thus
\[
\sup_{t\ge0}\mathcal X_p(t)
\le\mathcal X_{p,\infty}<\infty.
\]
Substituting \eqref{E-28} into
\eqref{E-16}, we obtain
\[
D^+\mathcal D_v(t)
\le
-\lambda\mathcal D_v(t),
\quad
\lambda
:=\frac{\kappa}{2C_{\phi}}
\min\bigl\{6^{-\alpha},d_{\phi}\bigr\}
(1+\mathcal X_{p,\infty})^{-\alpha}>0.
\]
This yields
\begin{equation}\label{E-29}
\mathcal D_v(t)
\le
\mathcal D_v(0)e^{-\lambda t},
\qquad t\ge0.
\end{equation}
Finally, we combine \eqref{E-28} and \eqref{E-29} to derive an exponential weak support flocking in the sense of
Definition~\ref{D1.1}. This completes the proof.
\begin{remark}
	Our results can also be extended to regime $\alpha>1$ for restricted initial data similar to \cite{flocking4}.
\end{remark}

\section{Weak moment flocking}\label{sec:6}
\setcounter{equation}{0}
In this section, we provide a proof of Theorem \ref{T3.2} on the weak moment flocking. In what follows, we fix $p\ge2$.  As shown in
Lemma~\ref{L6.1} below, Proposition~\ref{P3.1} provides the global Lagrangian weak solution
$\mu_t=(X(t,\cdot),V(t,\cdot))_{\#}\mu_0$. We also use the exact decomposition
\begin{align}
\begin{aligned} \label{F-1}
& V(t,z) =m_0+e^{-\kappa t}q_0(z)+U(t,z),
\quad q_0(z)=v-m_0, \\
& U\in\mathcal C_{\mathrm{loc}}
\bigl([0,\infty);L^\infty(\mu_0;\mathbb R^d)\bigr).
\end{aligned}
\end{align}
The proof follows six steps outlined in Theorem~\ref{T3.2}. The main
point is that the unbounded initial velocity tail is confined to the
explicit factor $e^{-\kappa t}q_0$, whereas the bounded remainder $U$
obeys a normalized alignment equation with an exponentially decaying
forcing. We first record the moment consequences of the assumptions in
Theorem~\ref{T3.2}, and then we combine the normalized estimate from
Section~\ref{sec:4} with the common-mass contraction from
Section~\ref{sec:5}, and finally close the resulting scalar system.

\subsection{Pairwise moments and the unbounded initial tail}\label{sec:6.1}
Note that the assumptions of Theorem~\ref{T3.2} are formulated in terms of
pairwise moments. In the next lemma, we show that they imply the
absolute second moments required by Proposition~\ref{P3.1} and
also quantify the size of $q_0$.

\begin{lemma} \label{L6.1}
Suppose that parameter and initial datum satisfy 
\[ p \geq 2, \quad  \mathcal X_p(0)<\infty, \quad
\mathcal V_p(0)<\infty, \]
and let $\mu_t=(X(t,\cdot),V(t,\cdot))_{\#}\mu_0$ be a global Lagrangian weak
solution to \eqref{A-3}. Then, the following assertions hold.
\begin{enumerate}
\item
The initial measure $\mu_0$ satisfies the integrability:
\[
\mu_0\in\mathcal P_p(\mathbb R^{2d}), \quad \mbox{in particular}, \quad  \mu_0\in\mathcal P_2(\mathbb R^{2d}).
\]
\item
$m_0$ is well defined,  $q_0\in L^p(\mu_0;\mathbb R^d)$ and 
\begin{equation} \label{F-2}
\|q_0\|_{L^2(\mu_0)}
=\frac{1}{\sqrt2}\mathcal V_2(0)
\le\frac{1}{\sqrt2}\mathcal V_p(0).
\end{equation}
\item
For every time at which $\mathcal X_p(t)<\infty$,
\begin{equation}\label{F-3}
\mathcal X_2(t)\le\mathcal X_p(t).
\end{equation}
\end{enumerate}
\end{lemma}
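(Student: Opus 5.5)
The plan is to argue directly from the definitions of $\mathcal X_p(0)$ and $\mathcal V_p(0)$ as double integrals against $\mu_0\otimes\mu_0$, plus elementary moment inequalities. At $t=0$ we have $X(0,z)=x$ and $V(0,z)=v$, so
\[
\mathcal X_p(0)^p=\iint|x-x_\star|^p\,\mu_0(\di z)\mu_0(\di z_\star),\qquad \mathcal V_p(0)^p=\iint|v-v_\star|^p\,\mu_0(\di z)\mu_0(\di z_\star).
\]

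For (1), finiteness of the double integral and Tonelli's theorem show that for $\mu_0$-a.e. $z_\star$ the inner integral $\int|x-x_\star|^p\,\mu_0(\di z)$ is finite. We fix one such label $z_\star=(x_\star,v_\star)$; the same argument as in Lemma~\ref{L5.1}(i) even gives one with inner integral at most $\mathcal X_p(0)^p$. We do this simultaneously for the velocity integral, since the intersection of two full-measure sets is nonempty. Then
\[
|x|^p\le 2^{p-1}\bigl(|x-x_\star|^p+|x_\star|^p\bigr),
\]
so $\int|x|^p\,\di\mu_0<\infty$, and likewise $\int|v|^p\,\di\mu_0<\infty$. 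Since $|z|^p\le 2^{p/2}(|x|^p+|v|^p)$ up to constants, we get $\mu_0\in\mathcal P_p$. Finally $\mathcal P_p\subset\mathcal P_2$ for $p\ge2$, by Jensen or Hölder on a probability measure.

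For (2), since the first velocity moment is finite, $m_0$ is well defined. Also $q_0=v-m_0\in L^p(\mu_0)$ because $|q_0|^p\le 2^{p-1}(|v|^p+|m_0|^p)$. For the identity, expand
\[
\iint|v-v_\star|^2=\iint|q_0(z)-q_0(z_\star)|^2=2\int|q_0|^2\,\di\mu_0-2\Bigl|\int q_0\,\di\mu_0\Bigr|^2 .
\]
The last term vanishes because $\int q_0\,\di\mu_0=0$. This is the variance identity already used in \eqref{D-18}, and it gives $\|q_0\|_{L^2}=\mathcal V_2(0)/\sqrt2$. The inequality $\mathcal V_2(0)\le\mathcal V_p(0)$ is monotonicity of $L^r$ norms on the probability space $(\mathbb R^{4d},\mu_0\otimes\mu_0)$, using Hölder with exponent $p/2\ge1$.

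For (3), apply the same $L^2\le L^p$ monotonicity on $\mu_0\otimes\mu_0$ to the function $(z,z_\star)\mapsto|X(t,z)-X(t,z_\star)|$.

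There is no real obstacle here. The only point needing care is (1): we must go from pairwise moments to absolute moments. The way through is to choose a good reference label $z_\star$ via Tonelli, rather than trying to compare against the mean, which is not yet known to exist.
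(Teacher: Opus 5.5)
Your proposal is correct and follows essentially the same route as the paper. Like the paper, you pick a reference label $z_0=(x_0,v_0)$ at which both inner $p$-moment integrals are finite (via Tonelli and intersecting two full-measure sets), then convert pairwise moments to absolute moments with $|a+b|^p\le 2^{p-1}(|a|^p+|b|^p)$, and finish (2) and (3) with the variance identity and monotonicity of $L^r$ norms on the product probability space. Your explicit expansion $\iint|q_0(z)-q_0(z_\star)|^2=2\|q_0\|_{L^2(\mu_0)}^2-2\bigl|\int q_0\,\di\mu_0\bigr|^2$ and the passage from $|x|^p,|v|^p$ to $|z|^p$ are details the paper leaves implicit.
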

\begin{proof}
(1)~We set 
\[
F_x(z):=\int_{\mathbb R^{2d}}|x-x_{\star}|^p\,\mu_0(\di z_{\star}),
\quad
F_v(z):=\int_{\mathbb R^{2d}}|v-v_{\star}|^p\,\mu_0(\di z_{\star}).
\]
By Tonelli's theorem, one has 
\[
\int F_x\,\di\mu_0=\mathcal X_p(0)^p<\infty,
\quad
\int F_v\,\di\mu_0=\mathcal V_p(0)^p<\infty.
\]
Hence there exists a label $z_0=(x_0,v_0)$ for which both $F_x(z_0)$
and $F_v(z_0)$ are finite. Now, we use  $|a+b|^p\le2^{p-1}(|a|^p+|b|^p)$ to obtain
\[
\int|x|^p\,\di\mu_0
\le2^{p-1}\left(F_x(z_0)+|x_0|^p\right)<\infty, \quad 
\int|v|^p\,\di\mu_0
\le2^{p-1}\left(F_v(z_0)+|v_0|^p\right)<\infty.
\]
Thus 
\[ \mu_0\in\mathcal P_p(\mathbb R^{2d}). \]
(2) and (3):~Note that $m_0=\int v\,\di\mu_0$ is well defined and $q_0=v-m_0\in L^p(\mu_0)$. Since $\mu_0$ is a probability measure and has a finite second velocity
moment, the variance identity yields
\[
\int|v-m_0|^2\,\di\mu_0
=\frac12\iint|v-v_{\star}|^2\,
\mu_0(\di z)\mu_0(\di z_{\star})
=\frac12\mathcal V_2(0)^2.
\]
Because $p\ge2$, monotonicity of $L^r$ norms on a probability space gives
\[ \mathcal V_2(0)\le\mathcal V_p(0), \]
which verifies \eqref{F-2}. The same argument on the product
probability space gives \eqref{F-3}.
\end{proof}
Next, we combine Lemma~\ref{L4.2} with the preceding
pairwise estimate to justify \textbf{Step B} in the outlined proof of Theorem~\ref{T3.2}.
\begin{lemma}
\label{L6.2}
Suppose that the same setting in Theorem~\ref{T3.2} holds. Then, there exists a constant
$C_{\rm tail}>0$, depending only on $C_\phi$, $\alpha$, and
$\mathcal V_p(0)$, such that
\begin{equation}\label{F-4}
\|\mathscr T_{X(t)}q_0\|_{L^\infty(\mu_0)}
\le \frac{2^{\alpha+1}C_{\phi} \mathcal V_p(0)}{\sqrt2} \bigl(1+\mathcal X_p(t)\bigr)^\alpha=:C_{\rm tail}\bigl(1+\mathcal X_p(t)\bigr)^\alpha,
\end{equation}
for every $t\ge0$ for which $\mathcal X_p(t)<\infty$. 
\end{lemma}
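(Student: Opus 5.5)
The estimate \eqref{F-4} should follow by feeding the moment bounds of Lemma~\ref{L6.1} into the $L^2$--$L^\infty$ estimate of Lemma~\ref{L4.2}. Fix a time $t\ge0$ with $\mathcal X_p(t)<\infty$ and apply Lemma~\ref{L4.2} to the configuration $X=X(t,\cdot)$ and $h=q_0$. This is legitimate because $q_0\in L^2(\mu_0;\mathbb R^d)$ by Lemma~\ref{L6.1}(2). We also need $X(t,\cdot)\in L^2(\mu_0;\mathbb R^d)$, which holds by the moment propagation \eqref{C-8} in Proposition~\ref{P3.1}; Lemma~\ref{L6.1}(1) guarantees that the hypotheses of Proposition~\ref{P3.1} are met. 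This gives
\[
\|\mathscr T_{X(t)}q_0\|_{L^\infty(\mu_0)}\le 2^{\alpha+1}C_\phi\|q_0\|_{L^2(\mu_0)}\bigl(1+\mathfrak X_2[X(t)]\bigr)^\alpha .
\]

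Next I identify the two factors on the right with the quantities in \eqref{F-4}. By definition \eqref{D-13} and \eqref{A-6}, $\mathfrak X_2[X(t)]=\mathcal X_2(t)$. By Lemma~\ref{L6.1}(3), $\mathcal X_2(t)\le\mathcal X_p(t)$ since $p\ge2$. Because $\alpha\ge0$, the map $r\mapsto(1+r)^\alpha$ is nondecreasing, so $(1+\mathcal X_2(t))^\alpha\le(1+\mathcal X_p(t))^\alpha$. For the other factor, \eqref{F-2} gives $\|q_0\|_{L^2(\mu_0)}\le\mathcal V_p(0)/\sqrt2$. Multiplying the two bounds yields exactly
\[
\|\mathscr T_{X(t)}q_0\|_{L^\infty(\mu_0)}\le \frac{2^{\alpha+1}C_\phi\mathcal V_p(0)}{\sqrt2}\bigl(1+\mathcal X_p(t)\bigr)^\alpha .
\]
The constant $C_{\rm tail}$ depends only on $C_\phi$, $\alpha$ and $\mathcal V_p(0)$, as claimed. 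If $\mathcal V_p(0)=0$, then $q_0=0$ $\mu_0$-a.e. and the bound is trivial; otherwise $C_{\rm tail}>0$.

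The only point needing care is that Lemma~\ref{L4.2} requires $X(t,\cdot)\in L^2(\mu_0)$, not just a finite pairwise moment. In the setting of Theorem~\ref{T3.2} this is supplied by Proposition~\ref{P3.1} via \eqref{C-8} at every finite $t$. Alternatively, the proof of Lemma~\ref{L4.2} only uses the centered quantity $X-\bar X$, whose $L^2$ norm is controlled by $\mathcal X_2(t)$ through the variance identity \eqref{D-18}, so finiteness of $\mathcal X_p(t)$ already suffices. Beyond this check the argument is a direct chaining of earlier estimates, and I expect no genuine obstacle.
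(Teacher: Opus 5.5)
Your proposal is correct and follows the paper's proof essentially verbatim. You apply Lemma~\ref{L4.2} with $X=X(t,\cdot)$ and $h=q_0$, then insert $\|q_0\|_{L^2(\mu_0)}\le \mathcal V_p(0)/\sqrt2$ and $\mathcal X_2(t)\le\mathcal X_p(t)$ from Lemma~\ref{L6.1}; your extra check that $X(t,\cdot)\in L^2(\mu_0)$ via \eqref{C-8}, which the paper leaves implicit, is handled correctly.
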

\begin{proof}
It follows from Lemma~\ref{L4.2} that 
\begin{equation} \label{F-5}
\|\mathscr T_{X(t)}q_0\|_{L^\infty}
\le 2^{\alpha+1}C_{\phi} \|q_0\|_{L^2(\mu_0)}
\bigl(1+\mathcal X_2(t)\bigr)^\alpha.
\end{equation}
By Lemma~\ref{L6.1} and $p\ge2$, we have
\begin{equation} \label{F-6}
\|q_0\|_{L^2(\mu_0)}\le\frac1{\sqrt2}\mathcal V_p(0),
\quad
\mathcal X_2(t)\le\mathcal X_p(t).
\end{equation}
Finally, we combine \eqref{F-5} and \eqref{F-6} to get \eqref{F-4}.
\end{proof}

\subsection{Remainder oscillation and the scalar reduction}\label{sec:6.2}
Define the oscillation of the bounded remainder by
\begin{equation}\label{F-7}
\mathcal D_U(t)
:=\operatorname*{ess\,sup}_{z,z_{\star}}
|U(t,z)-U(t,z_{\star})|
\end{equation}
and an auxiliary spatial scale:
\begin{equation}\label{F-8}
\mathcal S_U(t)
:=1+\mathcal X_p(0)+\frac1\kappa\mathcal V_p(0)
+\int_0^t\mathcal D_U(s)\,\di s.
\end{equation}
In the next lemma, we explain why this is the natural spatial envelope in the
fully noncompact regime.
\begin{lemma} \label{L6.3}
Suppose that the same setting in Theorem~\ref{T3.2} holds, and let $\mu_t$ be the global Lagrangian weak solution furnished by Proposition~\ref{P3.1}. Then, the following assertions hold.
\begin{enumerate}
\item
$\mathcal D_U$ is finite on
every compact time interval and
\begin{equation}\label{F-9}
1+\mathcal X_p(t)\le\mathcal S_U(t),
\qquad t\ge0.
\end{equation}
\item
$\mathcal D_U$ is locally Lipschitz,
$\mathcal S_U\in C^1([0,\infty))$ and
\begin{equation}\label{F-10}
\mathcal S_U'(t)=\mathcal D_U(t),
\qquad
\mathcal D_U(0)=0.
\end{equation}
\end{enumerate}
\end{lemma}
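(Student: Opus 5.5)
The plan is to read everything off the Lagrangian representation \eqref{D-4} together with the regularity class of $U$ given by Proposition~\ref{P3.1}. For part (1), finiteness of $\mathcal D_U$ on compact intervals is immediate. Indeed, $U\in C_{\rm loc}([0,\infty);L^\infty(\mu_0;\mathbb R^d))$ gives
\[
\mathcal D_U(t)\le 2\|U(t)\|_{L^\infty(\mu_0)}\le 2\sup_{0\le s\le T}\|U(s)\|_{L^\infty(\mu_0)}<\infty
\]
for $t\in[0,T]$.

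For \eqref{F-9}, I take the pairwise difference of $\eqref{D-4}_1$. Since $q_0(z)-q_0(z_\star)=v-v_\star$, this gives
\[
X(t,z)-X(t,z_\star)=(x-x_\star)+\frac{1-e^{-\kappa t}}{\kappa}(v-v_\star)+\int_0^t\bigl(U(s,z)-U(s,z_\star)\bigr)\di s.
\]
I then apply Minkowski's inequality in $L^p(\mu_0\otimes\mu_0)$, and also inside the time integral, exactly as in the proofs of Lemma~\ref{L4.3} and Lemma~\ref{L5.4}. The first two terms contribute at most $\mathcal X_p(0)+\kappa^{-1}\mathcal V_p(0)$, using $1-e^{-\kappa t}\le1$. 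For the third term, the $L^p$ norm of $U(s,z)-U(s,z_\star)$ on the product probability space is bounded by its essential supremum $\mathcal D_U(s)$. Adding $1$ to both sides yields $1+\mathcal X_p(t)\le\mathcal S_U(t)$.

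For part (2), first $\mathcal D_U(0)=0$, since $U(0,\cdot)=0$ by \eqref{D-8}. For local Lipschitz continuity I use the reverse triangle inequality
\[
|\mathcal D_U(t)-\mathcal D_U(s)|\le 2\|U(t)-U(s)\|_{L^\infty(\mu_0)},
\]
so it suffices to show that $t\mapsto U(t)$ is locally Lipschitz in $L^\infty(\mu_0)$. This follows from the differentiated form in Step D of Section~\ref{sec:4.4}:
\[
\dot U=-\kappa U+\kappa\mathscr T_{X(t)}U+\kappa e^{-\kappa t}\mathscr T_{X(t)}q_0.
\]
On $[0,T]$, the first two terms are bounded by $2\kappa\sup_{[0,T]}\|U\|_{L^\infty}$ via \eqref{D-5}. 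The third term is bounded by $\kappa C_{\rm tail}(1+\mathcal X_p(t))^\alpha$ via Lemma~\ref{L6.2}. To make this last bound finite, I need $\sup_{[0,T]}\mathcal X_p<\infty$, and this follows from the bound just proved: $\mathcal X_p(t)\le \mathcal S_U(t)\le \mathcal S_U(0)+2T\sup_{[0,T]}\|U\|_{L^\infty}$. Equivalently, one can estimate $U(t)-U(s)$ directly from the integral identity \eqref{D-56}, splitting into the factor $e^{-\kappa(t-s)}-1$ and the integral over $[s,t]$, and this gives the same Lipschitz constant.

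Once $\mathcal D_U$ is continuous, the fundamental theorem of calculus gives $\mathcal S_U\in C^1$ with $\mathcal S_U'=\mathcal D_U$ everywhere. The only mildly delicate point is the order of the argument: finiteness of $\mathcal X_p(t)$ for $p>2$ is not supplied by Proposition~\ref{P3.1}, which only controls second moments. So \eqref{F-9} must be proved first, directly from the representation and the initial pairwise moments, before Lemma~\ref{L6.2} is invoked in the Lipschitz estimate. Measurability of the ess sup in $(z,z_\star)$ causes no issue.
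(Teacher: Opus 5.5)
Your proof is correct and follows the paper's argument essentially step by step. You obtain \eqref{F-9} from Minkowski's inequality in $L^p(\mu_0\otimes\mu_0)$ applied to the pairwise difference of $\eqref{D-4}_1$, and you get the local Lipschitz continuity of $\mathcal D_U$, hence $\mathcal S_U'=\mathcal D_U$, from a uniform $L^\infty(\mu_0)$ bound on $\dot U$ in \eqref{F-13} via \eqref{D-5} and Lemma~\ref{L6.2}. Your remark that \eqref{F-9} must come first, so that $\sup_{[0,T]}\mathcal X_p<\infty$ before Lemma~\ref{L6.2} is used, and your alternative Lipschitz estimate through the integral identity \eqref{D-56}, make explicit two points the paper leaves implicit.
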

\begin{proof}
\noindent (1)~
Since $U\in C_{\mathrm{loc}}([0,\infty);L^\infty(\mu_0))$ by
Proposition~\ref{P3.1}, $\mathcal D_U(t)<\infty$ on every
compact time interval. Moreover, it follows from \eqref{D-12} that for
$z,z_{\star}\in\mathbb R^{2d}$,
\begin{align*}
X(t,z)-X(t,z_{\star})
={}&x-x_{\star}
+\frac{1-e^{-\kappa t}}{\kappa}
\bigl(q_0(z)-q_0(z_{\star})\bigr)+\int_0^t
\bigl(U(s,z)-U(s,z_{\star})\bigr)\,\di s.
\end{align*}
Since $q_0(z)-q_0(z_{\star})=v-v_{\star}$, Minkowski's inequality in
$L^p(\mu_0\otimes\mu_0)$ yields the desired estimate \eqref{F-9}:
\begin{align}
\mathcal X_p(t)
&\le\mathcal X_p(0)
+\frac{1-e^{-\kappa t}}{\kappa}\mathcal V_p(0)
+\int_0^t
\left(\iint|U(s,z)-U(s,z_{\star})|^p\,
\di\mu_0(z)\di\mu_0(z_{\star})\right)^{1/p}\di s
\nonumber\\
&\le\mathcal X_p(0)+\frac1\kappa\mathcal V_p(0)
+\int_0^t\mathcal D_U(s)\,\di s.
\label{F-11}
\end{align}
\noindent (2)~It remains to justify the time regularity. Indeed, we differentiate two Volterra terms in $\eqref{D-4}_3$ to get 
\begin{align}
\begin{aligned} \label{F-12}
& \frac{\di}{\di t}
	\left[
	\kappa\int_0^t e^{-\kappa(t-s)}
	\mathscr T_{X(s)}U(s,\cdot)(z)\,\di s
	\right] \\
& \hspace{1cm} = \kappa\mathscr T_{X(t)}U(t,\cdot)(z)  -\kappa^2\int_0^t e^{-\kappa(t-s)}
	\mathscr T_{X(s)}U(s,\cdot)(z)\,\di s, \\
& \frac{\di}{\di t}
	\left[
	\kappa\int_0^t e^{-\kappa(t-s)}e^{-\kappa s}
	(\mathscr T_{X(s)}q_0)(z)\,\di s
	\right] \\
& \hspace{1cm} =
	\kappa e^{-\kappa t}\mathscr T_{X(t)}q_0(z) 
	-\kappa^2\int_0^t e^{-\kappa(t-s)}e^{-\kappa s}
	(\mathscr T_{X(s)}q_0)(z)\,\di s.
\end{aligned}
\end{align}
We add two identities  in \eqref{F-12} and use $\eqref{D-4}_3$, the integral terms combine into
$-\kappa U(t,z)$, and hence we have
\begin{equation}\label{F-13}
\dot U(t,z)
=-\kappa U(t,z)
+\kappa\mathscr T_{X(t)}U(t,\cdot)(z)
+\kappa e^{-\kappa t}\mathscr T_{X(t)}q_0(z), \quad \mbox{a.e.,}~t > 0.
\end{equation}
On each interval $[0,T]$,
Proposition~\ref{P3.1}, the bound
\eqref{D-5}, and
Lemma~\ref{L6.2} imply
\[
\operatorname*{ess\ \sup}_{0\le t\le T}\|\dot U(t)\|_{L^\infty(\mu_0)}<\infty.
\]
Hence $U$ is Lipschitz from $[0,T]$ into $L^\infty(\mu_0)$ and
\[
|\mathcal D_U(t)-\mathcal D_U(s)|
\le2\|U(t)-U(s)\|_{L^\infty(\mu_0)}.
\]
Thus, $\mathcal D_U$ is locally Lipschitz. Therefore
\eqref{F-8} implies $\mathcal S_U\in C^1$ and
$\mathcal S_U'=\mathcal D_U$. Finally, $U(0)=0$ gives
$\mathcal D_U(0)=0$.
\end{proof}
In the next lemma, we estimate the principal dynamical inequality for the
fully noncompact problem. It combines the common-mass estimate from
Section~\ref{sec:5} with the normalized tail bound above. We set 
\begin{equation}\label{F-14}
a_{\rm align}:=\frac{\kappa}{2C_{\phi}}
\min\bigl\{6^{-\alpha},d_{\phi}\bigr\},
\qquad
b_{\rm tail}:=\frac{2^{\alpha+2}\kappa C_{\phi}}{\sqrt2}\mathcal V_p(0).
\end{equation}
\begin{lemma}
\label{L6.4}
Suppose that the same setting in Theorem~\ref{T3.2} holds, and let $\mu_t$ be the global Lagrangian weak solution furnished by Proposition~\ref{P3.1}. Then $\mathcal D_U$ satisfies 
\begin{equation}\label{F-15}
D^+\mathcal D_U
\le
-a_{\rm align}\mathcal S_U^{-\alpha}\mathcal D_U
+b_{\rm tail} e^{-\kappa t}\mathcal S_U^\alpha,
\quad \mbox{a.e.}~t >0.
\end{equation}
\end{lemma}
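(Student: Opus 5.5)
We will mimic the proof of Lemma~\ref{L5.3}, now working with the remainder equation \eqref{F-13} instead of the velocity equation. By Duhamel's formula applied to \eqref{F-13} on $[s,t]$,
\[
U(t,z)=e^{-\kappa(t-s)}U(s,z)+\kappa\int_s^t e^{-\kappa(t-\tau)}\Bigl(\mathscr T_{X(\tau)}U(\tau,\cdot)(z)+e^{-\kappa\tau}\mathscr T_{X(\tau)}q_0(z)\Bigr)\di\tau .
\]
We take the difference of this identity at two labels $z,\widetilde z$ and then the essential supremum. The forcing term is then bounded crudely by
\[
\operatorname*{ess\,sup}_{z,\widetilde z}\bigl|\mathscr T_{X(\tau)}q_0(z)-\mathscr T_{X(\tau)}q_0(\widetilde z)\bigr|\le 2\|\mathscr T_{X(\tau)}q_0\|_{L^\infty(\mu_0)}\le 2C_{\rm tail}(1+\mathcal X_p(\tau))^\alpha\le 2C_{\rm tail}\,\mathcal S_U(\tau)^\alpha .
\]
Here we use Lemma~\ref{L6.2} and then \eqref{F-9}. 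Note that $2\kappa C_{\rm tail}=b_{\rm tail}$ from \eqref{F-14}, so this fixes the constant.

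For the alignment term, $U(\tau)\in L^\infty(\mu_0)$. We apply the Dobrushin estimate \eqref{E-4} of Lemma~\ref{L5.2} to the configuration $X(\tau,\cdot)$ with $h=U(\tau,\cdot)$. The hypothesis $\mathfrak X_p[X(\tau)]=\mathcal X_p(\tau)<\infty$ holds by \eqref{F-9}. This gives
\[
\operatorname*{ess\,sup}_{z,\widetilde z}\bigl|\mathscr T_{X(\tau)}U(\tau)(z)-\mathscr T_{X(\tau)}U(\tau)(\widetilde z)\bigr|\le (1-\eta(\tau))\mathcal D_U(\tau),
\]
where $\eta(\tau)=\frac{1}{2C_\phi}\min\{6^{-\alpha},d_\phi\}(1+\mathcal X_p(\tau))^{-\alpha}\ge \frac{a_{\rm align}}{\kappa}\mathcal S_U(\tau)^{-\alpha}$, again by \eqref{F-9}. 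Combining the two bounds yields the integral inequality
\[
\mathcal D_U(t)\le e^{-\kappa(t-s)}\mathcal D_U(s)+\int_s^t e^{-\kappa(t-\tau)}\Bigl(\kappa\mathcal D_U(\tau)-a_{\rm align}\mathcal S_U(\tau)^{-\alpha}\mathcal D_U(\tau)+b_{\rm tail}e^{-\kappa\tau}\mathcal S_U(\tau)^\alpha\Bigr)\di\tau .
\]

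To pass to the Dini derivative we follow \eqref{E-20}--\eqref{E-22}: subtract $\mathcal D_U(s)$, divide by $t-s$, and let $t\downarrow s$. This requires the integrand to be continuous at $s$. Continuity of $\mathcal D_U$ and $\mathcal S_U$ is given by Lemma~\ref{L6.3}(2). The Lipschitz bound $|\mathcal X_p(t)-\mathcal X_p(s)|\le\int_s^t\mathcal V_p$, which is locally finite by $\mathcal V_p\le e^{-\kappa t}\mathcal V_p(0)+\mathcal D_U$, is not needed at all, since we have already replaced $(1+\mathcal X_p)$ by $\mathcal S_U$ inside the integral. The limit then gives
\[
D^+\mathcal D_U(s)\le -\kappa\mathcal D_U(s)+\kappa\mathcal D_U(s)-a_{\rm align}\mathcal S_U(s)^{-\alpha}\mathcal D_U(s)+b_{\rm tail}e^{-\kappa s}\mathcal S_U(s)^\alpha ,
\]
which is exactly \eqref{F-15}.

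The main technical point is the monotone replacement of $\mathcal X_p$ by $\mathcal S_U$. In the contraction coefficient, $(1+\mathcal X_p)^{-\alpha}\ge\mathcal S_U^{-\alpha}$ goes in the right direction because the term carries a minus sign and $\mathcal D_U\ge0$. In the tail coefficient, $(1+\mathcal X_p)^\alpha\le\mathcal S_U^\alpha$ also goes in the right direction. A secondary care point is that the Dobrushin estimate of Lemma~\ref{L5.2} needs $h$ bounded, which holds since $U\in C_{\rm loc}([0,\infty);L^\infty(\mu_0))$. The unbounded $q_0$ is never fed into \eqref{E-4}; it enters only through its bounded average controlled by Lemma~\ref{L6.2}.
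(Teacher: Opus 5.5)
Your proposal is correct and follows essentially the same route as the paper: Duhamel's formula for \eqref{F-13} on $[s,t]$, the Dobrushin estimate \eqref{E-4} applied to $\mathscr T_{X(\tau)}U(\tau,\cdot)$, the crude bound $2\|\mathscr T_{X(\tau)}q_0\|_{L^\infty(\mu_0)}$ for the forcing via Lemma~\ref{L6.2}, replacement of $1+\mathcal X_p$ by $\mathcal S_U$ through \eqref{F-9}, and then the difference-quotient limit as $t\downarrow s$ using only the continuity of $\mathcal D_U$ and $\mathcal S_U$. Your constant bookkeeping also matches the paper, namely $2\kappa C_{\rm tail}=b_{\rm tail}$ and $\kappa\eta(\tau)\ge a_{\rm align}\mathcal S_U(\tau)^{-\alpha}$.
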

\begin{proof}
We first fix $0\le s<t$ and we use the variation-of-constants formula to 
\eqref{F-13} to get 
\begin{align*}
U(t,z)
={}&e^{-\kappa(t-s)}U(s,z)
+\kappa\int_s^t e^{-\kappa(t-\tau)}
\mathscr T_{X(\tau)}U(\tau,\cdot)(z)\,\di\tau\\
&+\kappa\int_s^t e^{-\kappa(t-\tau)}e^{-\kappa\tau}
\mathscr T_{X(\tau)}q_0(z)\,\di\tau.
\end{align*}
At each time $\tau$, Lemma~\ref{L5.2} applied to the
configuration $X(\tau,\cdot)$ gives
\[
\operatorname*{ess\,diam}
\bigl(\mathscr T_{X(\tau)}U(\tau,\cdot)\bigr)
\le
\bigl(1-\eta(\tau)\bigr)\mathcal D_U(\tau),
\]
where
\[
\eta(\tau)
:= \frac{1}{2C_{\phi}}
\min\bigl\{6^{-\alpha},d_{\phi}\bigr\} \bigl(1+\mathcal X_p(\tau)\bigr)^{-\alpha}.
\]
For the forcing term, we use Lemma~\ref{L6.2} to see
\[
\operatorname*{ess\,diam}
\bigl(\mathscr T_{X(\tau)}q_0\bigr)
\le2\|\mathscr T_{X(\tau)}q_0\|_{L^\infty}
\le2C_{\rm tail}\bigl(1+\mathcal X_p(\tau)\bigr)^\alpha.
\] Taking pairwise differences, then the
essential supremum, yields
\begin{align}
\mathcal D_U(t)
&\le e^{-\kappa(t-s)}\mathcal D_U(s)
+\kappa\int_s^t e^{-\kappa(t-\tau)}
\bigl(1-\eta(\tau)\bigr)\mathcal D_U(\tau)\,\di\tau
\nonumber\\
&\quad
+2\kappa C_{\rm tail}\int_s^t e^{-\kappa(t-\tau)}e^{-\kappa\tau}
\bigl(1+\mathcal X_p(\tau)\bigr)^\alpha\,\di\tau.
\label{F-16}
\end{align}
By Lemma~\ref{L6.3}, we have 
\[ 1+\mathcal X_p(\tau)\le\mathcal S_U(\tau). \] Hence, we get
\[
\eta(\tau)\ge \frac{1}{2C_{\phi}}
\min\bigl\{6^{-\alpha},d_{\phi}\bigr\} \mathcal S_U(\tau)^{-\alpha},
\qquad
\bigl(1+\mathcal X_p(\tau)\bigr)^\alpha
\le\mathcal S_U(\tau)^\alpha.
\]
Since $\mathcal D_U$ and $\mathcal S_U$ are continuous, we divide
\eqref{F-16} by $t-s$ and let $t\downarrow s$ to obtain the desired estimate:
\[
D^+\mathcal D_U(s)
\le
-\frac{\kappa}{2C_{\phi}}
\min\bigl\{6^{-\alpha},d_{\phi}\bigr\} \mathcal S_U(s)^{-\alpha}\mathcal D_U(s)
+2\kappa C_{\rm tail} e^{-\kappa s}\mathcal S_U(s)^\alpha.
\]
\end{proof}
Now, we study the estimate outlined in \textbf{Step E} of
Theorem~\ref{T3.2}. The proof separates the subcritical range
$0\le\alpha<1$ from the critical endpoint $\alpha=1$.

\begin{lemma} \label{L6.5}
Let ${\mathcal S}, {\mathcal D}:[0,\infty)\to[0,\infty)$ be locally Lipschitz, and they satisfy 
\begin{equation}\label{F-17}
\begin{cases}
\displaystyle D^+\mathcal D(t)
\le-a \mathcal S(t)^{-\alpha}\mathcal D(t)
+b e^{-\kappa t}\mathcal S(t)^\alpha, \quad \mbox{a.e.}~t > 0, \vspace{6pt}\\
\displaystyle  \mathcal S^{\prime}(t) =\mathcal D(t),\vspace{6pt}\\
\displaystyle  (\mathcal D, {\mathcal S}) \Big|_{t = 0} = (0, {\mathcal S}_0), \quad {\mathcal S}_0 \geq 1,
\end{cases}
\end{equation}
where parameters satisfy the following conditions:
\[ a >0,\quad \kappa>0, \quad b\ge0, \quad 0\le\alpha\le1. \]
Then there exist constants ${\mathcal S}_\infty$, $C_D$, and $\lambda$, depending only on
$a,b,\kappa,\alpha$, and ${\mathcal S}(0)$ such that
\begin{equation}\label{F-18}
\sup_{t\ge0} {\mathcal S}(t)\le {\mathcal S}_\infty,
\quad
{\mathcal D}(t)\le C_D e^{-\lambda t},
\quad t\ge0.
\end{equation}
\end{lemma}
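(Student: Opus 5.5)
The plan is to treat the first line of \eqref{F-17} as a linear differential inequality for $\mathcal D$ with a variable damping rate, solve it by Duhamel's formula, and close a bootstrap on $\mathcal S$.

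\textbf{Step 1: Duhamel representation.} Since $\mathcal D\ge0$, we have $\mathcal S'=\mathcal D\ge0$, so $\mathcal S$ is nondecreasing and $\mathcal S\ge\mathcal S_0\ge1$. Because $\mathcal D$ is locally Lipschitz, the Dini inequality can be integrated against the factor $\exp\bigl(a\int_0^t\mathcal S^{-\alpha}\bigr)$. With $\mathcal D(0)=0$ this gives
\[
\mathcal D(t)\le b\int_0^t e^{-\kappa s}\mathcal S(s)^\alpha
\exp\Bigl(-a\int_s^t\mathcal S(r)^{-\alpha}\,\di r\Bigr)\di s .
\]
By monotonicity of $\mathcal S$, two crude consequences follow. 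First, $\mathcal D(t)\le (b/\kappa)\mathcal S(t)^\alpha$. Second, if $\mathcal S\le K$ on $[0,T]$, then
\[
\mathcal D(t)\le b\int_0^t e^{-\kappa s}\mathcal S(s)^\alpha e^{-a(t-s)K^{-\alpha}}\,\di s
\]
for $t\le T$.

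\textbf{Step 2: a priori growth bound.} The bound $\mathcal S'\le(b/\kappa)\mathcal S^\alpha$ gives polynomial growth $\mathcal S(t)\lesssim(1+t)^{1/(1-\alpha)}$ when $\alpha<1$, and exponential growth when $\alpha=1$. These are too weak to use directly. I will improve them by splitting the Duhamel integral at $s=t/2$:
\[
\mathcal D(t)\le \frac b\kappa\,\mathcal S(t)^\alpha
\Bigl[\exp\Bigl(-a\int_{t/2}^t\mathcal S^{-\alpha}\Bigr)+e^{-\kappa t/2}\Bigr].
\]
For $0\le\alpha<1$, I insert the polynomial bound into the first term. This gives at least stretched-exponential decay of $\mathcal D$ whenever $\int_{t/2}^t\mathcal S^{-\alpha}$ grows like a positive power of $t$. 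Where that power is not yet positive, I iterate: each pass improves the growth exponent of $\mathcal S$, which improves the decay exponent of $\mathcal D$, until $\mathcal D$ becomes integrable. For $\alpha=1$ the analogous first step is to show sublinear growth of $\mathcal S$, for instance $\mathcal S(t)\le C(1+t)^{\theta}$ with $\theta<1$. That makes $\int_{t/2}^t\mathcal S^{-1}\gtrsim t^{1-\theta}$, and reinserting it yields integrable decay.

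\textbf{Step 3: uniform bound and exponential rate.} Once $\mathcal D\in L^1(0,\infty)$, we get $\mathcal S_\infty:=\mathcal S_0+\int_0^\infty\mathcal D<\infty$, and this is the first claim in \eqref{F-18}. With $\mathcal S\le\mathcal S_\infty$, the inequality becomes
\[
D^+\mathcal D\le -a\mathcal S_\infty^{-\alpha}\mathcal D+b\mathcal S_\infty^\alpha e^{-\kappa t}.
\]
Comparison then gives
\[
\mathcal D(t)\le C_D e^{-\lambda t},\qquad \lambda<\min\{a\mathcal S_\infty^{-\alpha},\kappa\}.
\]
If the two rates coincide, I lose a factor $t$, which I absorb by taking $\lambda$ slightly smaller.

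\textbf{Main obstacle.} The hard part is the bootstrap in Step 2 for larger $\alpha$, especially $\alpha\ge1/2$ and the endpoint $\alpha=1$. A naive continuity argument assumes $\mathcal S\le K$ and integrates the Step 1 bound. The resulting estimate is of the form $\mathcal S(T)\le\mathcal S_0+CK^{2\alpha}$, which does not close when $2\alpha\ge1$ and $b$ is large.

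What I would try first is to exploit the weight $e^{-\kappa s}$ in the forcing more carefully. The forcing is evaluated at early times $s$, where $\mathcal S(s)$ is still close to $\mathcal S_0$. So I would bound $\int_0^\infty e^{-\kappa s}\mathcal S(s)^\alpha\,\di s$ using the growth bound available at the current stage, rather than by $K^\alpha$. This should replace $K^{2\alpha}$ by $K^{\alpha}$ times a quantity depending only on the data, which is sublinear in $K$ for $\alpha<1$. For $\alpha=1$, the sublinear-growth step is where the iteration has to be set up most carefully, and the dependence of the constants on $a,b,\kappa,\mathcal S_0$ must be tracked there.
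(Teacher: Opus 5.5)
Your Steps 1 and 3 are sound; Step 3 is essentially the paper's Step D. Step 2, however, carries the whole content of the lemma and is not established, and at the endpoint $\alpha=1$ there is a genuine gap.

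The root problem is the a priori bound $\mathcal S'\le(b/\kappa)\mathcal S^\alpha$. It replaces $\int_0^te^{-\kappa s}\mathcal S(s)^\alpha\,\di s$ by $\kappa^{-1}\mathcal S(t)^\alpha$, and so discards exactly the fact that the forcing acts only at early times. For $\alpha=1$ it gives only $\mathcal S\le\mathcal S_0e^{bt/\kappa}$. Your remedy then breaks down, because $\int_0^\infty e^{-\kappa s}\mathcal S(s)\,\di s$ is not controlled when $b\ge\kappa^2$.

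The idea you are missing is the paper's Step A. Drop the damping, use $\mathcal S^\alpha\le\mathcal S$, and set $\mathcal H(t):=\int_0^te^{-\kappa s}\mathcal S(s)\,\di s$, so that $\mathcal D\le b\mathcal H$. Then substitute $\mathcal S(s)=\mathcal S_0+\int_0^s\mathcal D$ and exchange the order of integration:
\[
\mathcal H(t)\le\frac{\mathcal S_0}{\kappa}+\frac1\kappa\int_0^te^{-\kappa r}\mathcal D(r)\,\di r\le\frac{\mathcal S_0}{\kappa}+\frac b\kappa\int_0^te^{-\kappa r}\mathcal H(r)\,\di r .
\]
Since the Gr\"onwall kernel $e^{-\kappa r}$ is integrable, $\mathcal H\le(\mathcal S_0/\kappa)e^{b/\kappa^2}$. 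Hence $\mathcal D$ is uniformly bounded and $\mathcal S(t)\le\mathcal S_{\rm lin}(1+t)$, for every $\alpha\in[0,1]$ and every $b\ge0$.

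At $\alpha=1$, even linear growth does not let your mechanism start. With only $\mathcal S\le\mathcal S_{\rm lin}(1+t)$ available, the best lower bound is $\int_{t/2}^t\mathcal S^{-1}\ge\mathcal S_{\rm lin}^{-1}\log\frac{1+t}{1+t/2}$, which stays below $\mathcal S_{\rm lin}^{-1}\log 2$. So the $t/2$-split estimate returns no decay, and there is nothing to iterate. The same stall already occurs for $\alpha\ge1/2$ with your polynomial bound. A continuity argument with $\mathcal S\le K$ gives $\mathcal S(T)\le\mathcal S_0+CK$, which is linear in $K$ and closes only under a smallness condition.

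The paper instead uses the damping over the whole history. With $\delta_{\rm c}=a/\mathcal S_{\rm lin}$, the integrating factor $(1+t)^{\delta_{\rm c}}$ gives
\[
\mathcal D(t)\le b\mathcal S_{\rm lin}(1+t)^{-\delta_{\rm c}}\int_0^te^{-\kappa s}(1+s)^{1+\delta_{\rm c}}\,\di s\le\mathcal K_{\rm c}(1+t)^{-\delta_{\rm c}},
\]
precisely because the forcing lives at $s=O(1)$. This yields sublinear growth $\mathcal S\le\mathcal G_\gamma(1+t)^\gamma$ with $\gamma=1-\frac12\min\{\delta_{\rm c},1\}<1$. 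Only then does the $t/2$ split give a stretched-exponential, integrable bound.

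For $0\le\alpha<1$, by contrast, your remedy can be carried out. The polynomial bound makes $I:=\int_0^\infty e^{-\kappa s}\mathcal S^\alpha\,\di s$ finite. The continuity argument then gives $\mathcal S(T)\le\mathcal S_0+(bI/a)K^\alpha$, which closes because $K^\alpha$ is sublinear. That is a legitimate shortcut past the paper's stretched-exponential step, but you need to actually execute it, and it does not reach $\alpha=1$.
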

\begin{proof}
We leave the detailed proof  in Appendix \ref{app-A}.
\end{proof}

\subsection{Proof of Theorem~\ref{T3.2}}\label{sec:6.3}
We are now ready to prove the weak moment flocking. By Lemma~\ref{L6.1}, the assumptions
in Theorem \ref{T3.2} imply $\mu_0\in\mathcal P_2(\mathbb R^{2d})$.
Hence Proposition~\ref{P3.1} provides the global Lagrangian weak 
solution and the decomposition \eqref{F-1} with
\[U\in C_{\mathrm{loc}}([0,\infty);L^\infty(\mu_0)).\]
For simplicity, we set
\[
X_{p,0}:=\mathcal X_p(0),
\quad
V_{p,0}:=\mathcal V_p(0),
\quad
\mathcal  S_0:=1+X_{p,0}+\frac{V_{p,0}}{\kappa}.
\]
By Lemma~\ref{L6.3} and Lemma ~\ref{L6.4},
\[
\mathcal S(t):=\mathcal S_U(t),
\quad
\mathcal{D}(t):=\mathcal D_U(t)
\]
satisfy Lemma~\ref{L6.5} with $a=a_{\rm align}$ and $b=b_{\rm tail}$, where
\begin{align}\label{NewF-19}
a_{\rm align}
:=
\frac{\kappa}{2C_\phi}
\min\bigl\{6^{-\alpha},d_\phi\bigr\},
\quad
b_{\rm tail}
:=
2^{\alpha+\frac32}\kappa C_\phi V_{p,0},
\quad
\mathcal  S(0)=\mathcal  S_0.
\end{align}
Define
\[
\mathcal S_\infty
:=
\mathfrak S_\alpha(a_{\rm align},b_{\rm tail},\kappa,\mathcal  S_0),
\]
where $\mathfrak S_\alpha$ is the explicit function introduced in the proof of
Lemma~\ref{L6.5} (see $\eqref{A.3}_2$), and we set
\[
\lambda_U
:=
\frac12
\min\left\{
a_{\rm align}\mathcal  S_\infty^{-\alpha},\kappa
\right\},
\quad
C_U
:=
\frac{2b_{\rm tail}}{a_{\rm align}}\mathcal  S_\infty^{2\alpha}.
\]
Then Lemma~\ref{L6.5} gives
\begin{equation}\label{F-19}
\sup_{t\ge0}\mathcal S_U(t)\le \mathcal  S_\infty,
\quad
\mathcal D_U(t)\le C_Ue^{-\lambda_Ut},
\qquad t\ge0.
\end{equation}
In particular, it follows from \eqref{F-9} that
\begin{equation}\label{F-20}
\sup_{t\ge0}\mathcal X_p(t)
\le \mathcal S_\infty-1.
\end{equation}
Finally, we take pairwise differences in \eqref{F-1} to find 
\[
V(t,z)-V(t,z_{\star})
=e^{-\kappa t}(v-v_{\star})
+U(t,z)-U(t,z_{\star}).
\]
Minkowski's inequality in $L^p(\mu_0\otimes\mu_0)$ therefore yields
\begin{equation}\label{F-21}
\mathcal V_p(t)
\le e^{-\kappa t}V_{p,0}+\mathcal D_U(t).
\end{equation}
Since $\lambda_U\le\kappa/2<\kappa$, \eqref{F-19} and \eqref{F-21}
imply
\[
\mathcal V_p(t)
\le
\bigl(V_{p,0}+C_U\bigr)e^{-\lambda_Ut},
\qquad t\ge0.
\]
Thus the constants in Theorem~\ref{T3.2} may be chosen explicitly as
\[
\lambda:=\lambda_U
=
\frac12
\min\left\{
a_{\rm align}\mathcal S_\infty^{-\alpha},\kappa
\right\},
\qquad
C:=1+V_{p,0}+C_U
=
1+V_{p,0}+\frac{2b_{\rm tail}}{a_{\rm align}}\mathcal  S_\infty^{2\alpha},
\]
with
\[
\mathcal S_\infty
=
\mathfrak S_\alpha(a_{\rm align},b_{\rm tail},\kappa,\mathcal S_0),
\qquad
\mathcal  S_0=1+X_{p,0}+\frac{V_{p,0}}{\kappa},
\]
and $a_{\rm align},b_{\rm tail}$ as above in \eqref{NewF-19}. Hence $\mathcal S_\infty$, $C$, and $\lambda$ depend
only on
\[
\kappa,\quad
\alpha,\quad
C_\phi,\quad
d_\phi,\quad
\mathcal X_p(0),\quad
\mathcal V_p(0).
\]
This proves exponential weak moment flocking in the sense of
Definition~\ref{D1.1}.

\vspace{.3cm}

\section{Conclusion}\label{sec:7}
In this paper, we have studied the emergent dynamics of the phase-spatially extended KMT model without compactness assumptions on the spatial-velocity support. We first established a direct global Lagrangian well-posedness theory in the fully noncompact finite-moment regime by separating the unbounded initial velocity tail from a bounded interaction-generated remainder. For compact velocity support, a time-varying effective-region argument yields uniform spatial cohesion together with exponential contraction of the velocity-support diameter, and hence exponential weak support flocking. In the fully noncompact position--velocity regime, support-level alignment is in general unavailable. Nevertheless, the same normalized interaction mechanism, combined with the remainder decomposition, yields exponential decay of the pairwise velocity moment and therefore exponential weak moment flocking. These results show that the normalized MT interaction remains effective even in the absence of momentum conservation and compact phase-space support. Several directions remain open for future investigation. It would be interesting to extend the present noncompact framework to non-symmetric kinetic alignment models subject to stochastic or environmental forcing, where unbounded velocity tails arise naturally.  Moreover, the applicability of the effective-region and remainder-decomposition methods to other non-symmetric alignment models deserves further investigation. We leave these interesting problems for future study.

\vspace{.3cm}
\section*{Conflict of interest statement}
The authors declare no conflicts of interest.

\section*{Data availability statement}
The data supporting the findings of this study are available from the corresponding author upon reasonable request.

\vspace{1.5cm}

\appendix 

\section{Proof of Lemma \ref{L6.5}}\label{app-A}
In this appendix, we prove 
Lemma~\ref{L6.5}. For $\xi>0$ and $r\ge0$, we define
\[
\mathcal J_{\xi,r}
:=
\int_0^\infty e^{-\xi t}(1+t)^r\,\di t
=
e^\xi\xi^{-r-1}\Gamma(r+1,\xi),
\]
where $\Gamma(\cdot,\cdot)$ denotes the upper incomplete Gamma function
\[
\Gamma(s,x):=\int_x^\infty t^{s-1}e^{-t}\,\mathrm{d}t,
\qquad x>0,~ s>0.
\]
Moreover, we set
\[
{\mathcal D}_{\rm lin}:=
\frac{b {\mathcal S}_0}{\kappa}
\exp\!\left(\frac{b}{\kappa^2}\right),
\quad
{\mathcal S}_{\rm lin}:= {\mathcal S}_0+ {\mathcal D}_{\rm lin}.
\]
If $0\le\alpha<1$, we define
\begin{align}\label{A.1}
\gamma:=\alpha,
\quad
{\mathcal G}_\gamma:= {\mathcal S}_{\rm lin}.
\end{align}
If $\alpha=1$, we define
\begin{align}\label{A.2}
\delta_{\rm c}:=\frac{a}{{\mathcal S}_{\rm lin}},
\quad
{\mathcal K}_{\rm c}:=b {\mathcal S}_{\rm lin}\,\mathcal J_{\kappa,1+\delta_{\rm c}},
\quad
\gamma:=1-\frac12\min\{\delta_{\rm c},1\},
\quad
{\mathcal G}_\gamma:= {\mathcal S}_0+\frac{{\mathcal K}_{\rm c}}{\gamma}.
\end{align}
In both cases $0\le\gamma<1$. We set 
\begin{align}\label{A.3}
	\begin{aligned}
		& \vartheta_\gamma:=1-\gamma,
		\quad
		a_\gamma:=a {\mathcal G}_\gamma^{-\alpha},
		\quad
		b_\gamma:=b {\mathcal G}_\gamma^\alpha, \\
		& \mathfrak S_\alpha(a,b,\kappa, {\mathcal S}_0)
		:=
		{\mathcal S}_0+2 {\mathcal D}_{\rm lin}
		+b_\gamma\mathcal J_{\kappa,\gamma}
		\Gamma\!\left(1+\frac1{\vartheta_\gamma}\right)
		\left(\frac{3}{a_\gamma}\right)^{1/\vartheta_\gamma}
		+\frac{b_\gamma}{\kappa}\mathcal J_{\kappa/2,\gamma}.
	\end{aligned}
\end{align}
Here, $\Gamma(\cdot)$ denotes the standard Gamma function 
\[
\Gamma(z)=\int_{0}^{\infty} t^{z-1}e^{-t}\di t.
\] Then one may take
\begin{align}\label{A.4}
{\mathcal S}_\infty:=\mathfrak S_\alpha(a,b,\kappa, {\mathcal S}_0),
\qquad
\lambda:=
\frac12\min\left\{a {\mathcal S}_\infty^{-\alpha},\kappa\right\},
\quad
C_D:=
\frac{2b}{a} {\mathcal S}_\infty^{2\alpha}
\end{align}
in Lemma \ref{L6.5}. We split the proof into four steps. 
\vspace{0.2cm}

\noindent
$\bullet$ {\bf Step A} (A priori bounds):~Since $\mathcal {S}\ge1$ and $0\le\alpha\le1$, we have 
\[ \mathcal S^\alpha\le \mathcal S. \]
Then, we use this and drop the non-positive damping term in $\eqref{F-17}_1$ to see that 
\[
D^+\mathcal D(t) \le-a \mathcal S(t)^{-\alpha}\mathcal D(t)
+b e^{-\kappa t}\mathcal S(t)^\alpha \leq b e^{-\kappa t}\mathcal S(t)^\alpha \leq b e^{-\kappa t}\mathcal S(t),
\]
i.e.,  at almost
every differentiability point, we have
\[
\begin{cases}
\displaystyle  {\mathcal D}'(t)\le be^{-\kappa t} {\mathcal S}(t), \quad \mbox{a.e.}~t > 0, \vspace{6pt}\\
\displaystyle {\mathcal D}(0)=0.
\end{cases}
\]
We integrate this from $0$ to $t$ to get 
\begin{equation}\label{A.5}
{\mathcal D}(t)\le b{\mathcal H}(t),
\quad
{\mathcal H}(t):=\int_0^t e^{-\kappa s} {\mathcal S}(s)\,\di s.
\end{equation}
Since
\[
{\mathcal S}(t)= {\mathcal S}_0+\int_0^t {\mathcal D}(r)\,\di r,
\]
Fubini's theorem yields
\[
\begin{aligned}
{\mathcal H}(t)
&=
{\mathcal S}_0\int_0^t e^{-\kappa s}\,\di s
+\int_0^t e^{-\kappa s}\int_0^s {\mathcal D}(r)\,\di r\,\di s
\\
&\le
\frac{{\mathcal S}_0}{\kappa}
+\frac1\kappa\int_0^t e^{-\kappa r} {\mathcal D}(r)\,\di r\le
\frac{ {\mathcal S}_0}{\kappa}
+\frac b\kappa\int_0^t e^{-\kappa r} {\mathcal H}(r)\,\di r.
\end{aligned}
\]
Grönwall's inequality gives
\begin{equation}\label{A.6}
{\mathcal H}(t)
\le
\frac{ {\mathcal S}_0}{\kappa}
\exp\!\left(\frac{b}{\kappa^2}\right),
\qquad t\ge0.
\end{equation}
Then, \eqref{A.5} and \eqref{A.6} imply
\begin{equation}\label{A.7}
{\mathcal D}(t)\le {\mathcal D}_{\rm lin}
=
\frac{b {\mathcal S}_0}{\kappa}
\exp\!\left(\frac{b}{\kappa^2}\right),
\quad
{\mathcal S}(t)\le {\mathcal S}_0+ {\mathcal D}_{\rm lin}t\le  {\mathcal S}_{\rm lin}(1+t),
\end{equation}
where ${\mathcal S}_{\rm lin}:= {\mathcal S}_0+ {\mathcal D}_{\rm lin}$. \newline

\noindent $\bullet$ {\bf Step B} (Construction of an effective subcritical exponent):~For $\alpha \in [0, 1),$ we set  
\[
\gamma:=\alpha, \quad
{\mathcal G}_\gamma:= {\mathcal S}_{\rm lin}.
\]
Then the linear bound \eqref{A.7} directly gives
\[
{\mathcal S}(t)^{-\alpha}
\ge {\mathcal G}_\gamma^{-\alpha}(1+t)^{-\gamma},
\quad
{\mathcal S}(t)^\alpha
\le {\mathcal G}_\gamma^\alpha(1+t)^\gamma.
\]
For $\alpha=1$, we use \eqref{A.7} and $\eqref{F-17}_1$ to get 
\[
D^+ {\mathcal D}(t)
\le
-\delta_{\rm c}(1+t)^{-1} {\mathcal D}(t)
+b {\mathcal S}_{\rm lin}e^{-\kappa t}(1+t),
\quad
\delta_{\rm c}:=\frac{a}{{\mathcal S}_{\rm lin}}.
\]
Then, we multiply by $(1+t)^{\delta_{\rm c}}$ and integrate the resulting relation to get 
\[
{\mathcal D}(t)
\le
b {\mathcal S}_{\rm lin}(1+t)^{-\delta_{\rm c}}
\int_0^t e^{-\kappa s}(1+s)^{1+\delta_{\rm c}}\,\di s
\le
{\mathcal K}_{\rm c}(1+t)^{-\delta_{\rm c}},
\]
where
\[
{\mathcal K}_{\rm c}:=b{\mathcal S}_{\rm lin}\,\mathcal J_{\kappa,1+\delta_{\rm c}}.
\]
Next, we choose
\[
\gamma:=1-\frac12\min\{\delta_{\rm c},1\}.
\]
Then, we have 
\[
1/2\le\gamma<1 \quad \mbox{and}  \quad 1-\gamma=\frac12\min\{\delta_{\rm c},1\}\le\delta_{\rm c}.
\]
Hence,
\[
(1+s)^{-\delta_{\rm c}}\le(1+s)^{\gamma-1}.
\]
Therefore, we have
\[
{\mathcal S}(t)
=
{\mathcal S}_0+\int_0^t {\mathcal D}(s)\,\di s\le
{\mathcal S}_0+ {\mathcal K}_{\rm c}\int_0^t(1+s)^{\gamma-1}\,\di s
\le
\left({\mathcal S}_0+\frac{ {\mathcal K}_{\rm c}}{\gamma}\right)(1+t)^\gamma
=
{\mathcal G}_\gamma(1+t)^\gamma.
\]
Since $\alpha=1$, this implies
\[
{\mathcal S}(t)^{-1}
\ge {\mathcal G}_\gamma^{-1}(1+t)^{-\gamma},
\quad
{\mathcal S}(t)\le {\mathcal G}_\gamma(1+t)^\gamma.
\]
In both cases, with the explicit $\gamma$ and ${\mathcal G}_\gamma$ stated above in \eqref{A.1} and \eqref{A.2}, we have
\begin{equation}\label{A.8}
{\mathcal S}(t)^{-\alpha}
\ge {\mathcal G}_\gamma^{-\alpha}(1+t)^{-\gamma},
\quad
{\mathcal S}(t)^\alpha
\le{\mathcal G}_\gamma^\alpha(1+t)^\gamma,
\quad
0\le\gamma<1.
\end{equation}
Therefore $\eqref{F-17}_1$ yields
\begin{equation}\label{A.9}
D^+ {\mathcal D}(t)
\le
-a_\gamma(1+t)^{-\gamma} {\mathcal D}(t)
+b_\gamma e^{-\kappa t}(1+t)^\gamma,
\end{equation}
where $a_\gamma$ and $b_\gamma$ are given as follows. 
\[
a_\gamma:=a{\mathcal G}_\gamma^{-\alpha},
\quad
b_\gamma:=b{\mathcal G}_\gamma^\alpha.
\]
\noindent $\bullet$ {\bf Step C} (Explicit integrability and uniform boundedness of $\mathcal S$):~
By the comparison for the scalar differential inequality \eqref{A.9},
\[
{\mathcal D}(t)
\le
b_\gamma\int_0^t e^{-\kappa s}(1+s)^\gamma
\exp\!\left(
-a_\gamma\int_s^t(1+\tau)^{-\gamma}\,\di\tau
\right)\di s.
\]
We set 
\[
\vartheta_\gamma:=1-\gamma>0.
\]
\noindent $\diamond$~{\bf Case 1}:~For $t\ge2$, we split the integral at $t/2$:
\[ \mbox{Either}~~0 \leq s \leq \frac{t}{2}, \quad  \mbox{or} \quad  \frac{t}{2} \leq s \leq t. \]
If $0\le s\le t/2$, then
\[
\begin{aligned}
\int_s^t(1+\tau)^{-\gamma}\,\di\tau
\ge
\int_{t/2}^t(1+\tau)^{-\gamma}\,\di\tau
\ge
\frac{t}{2}(1+t)^{-\gamma}
\ge
\frac13(1+t)^{\vartheta_\gamma}.
\end{aligned}
\]
Hence the contribution of $[0,t/2]$ is bounded by
\[
b_\gamma\mathcal J_{\kappa,\gamma}
\exp\!\left(
-\frac{a_\gamma}{3}(1+t)^{\vartheta_\gamma}
\right).
\]
If $t/2\le s\le t$, the damping exponential is at most one,
and thus
\[
\begin{aligned}
b_\gamma\int_{t/2}^t e^{-\kappa s}(1+s)^\gamma\,\di s
\le
b_\gamma(1+t)^\gamma
\int_{t/2}^\infty e^{-\kappa s}\,\di s
=
\frac{b_\gamma}{\kappa}
(1+t)^\gamma e^{-\kappa t/2}.
\end{aligned}
\]
Consequently, we have
\begin{equation}\label{A.10}
{\mathcal D}(t)
\le
b_\gamma\mathcal J_{\kappa,\gamma}
e^{-\frac{a_\gamma}{3}(1+t)^{\vartheta_\gamma}}
+
\frac{b_\gamma}{\kappa}
(1+t)^\gamma e^{-\kappa t/2}.
\end{equation}
\noindent $\diamond$~{\bf Case 2}:~For $0\le t\le2$, the relation \eqref{A.7} gives 
\[ {\mathcal D}(t)\le {\mathcal D}_{\rm lin}. \]
Therefore, we have
\[
\begin{aligned}
\int_0^\infty {\mathcal D}(t)\,\di t
\le
2 {\mathcal D}_{\rm lin}
+
b_\gamma\mathcal J_{\kappa,\gamma}
\int_0^\infty
e^{-\frac{a_\gamma}{3}(1+t)^{\vartheta_\gamma}}\,\di t
+
\frac{b_\gamma}{\kappa}
\mathcal J_{\kappa/2,\gamma}.
\end{aligned}
\]
Since $(1+t)^{\vartheta_\gamma}\ge t^{\vartheta_\gamma}$,
\[
\begin{aligned}
\int_0^\infty
e^{-\frac{a_\gamma}{3}(1+t)^{\vartheta_\gamma}}\,\di t
\le
\int_0^\infty
e^{-\frac{a_\gamma}{3}t^{\vartheta_\gamma}}\,\di t
=
\Gamma\!\left(1+\frac1{\vartheta_\gamma}\right)
\left(\frac{3}{a_\gamma}\right)^{1/\vartheta_\gamma}.
\end{aligned}
\]
It follows that
\[
\int_0^\infty {\mathcal D}(t)\,\di t
\le
{\mathcal S}_\infty- {\mathcal S}_0,
\quad 
\mbox{where}
\quad 
{\mathcal S}_\infty
=
\mathfrak S_\alpha(a,b,\kappa,{\mathcal S}_0)
\]
is exactly the explicit constant stated in $\eqref{A.3}_2$. Since
\[ {\mathcal S}(t)= {\mathcal S}_0+\int_0^t {\mathcal D}(s)\,\di s, \]
we conclude that
\[
\sup_{t\ge0} {\mathcal S}(t)\le {\mathcal S}_\infty.
\]
\noindent $\bullet$ {\bf Step D} (Explicit exponential rate):~
We again return to $\eqref{F-17}_1$ to obtain
\[
D^+ {\mathcal D}(t)
\le
-a {\mathcal S}_\infty^{-\alpha}{\mathcal D}(t)+b {\mathcal S}_\infty^\alpha e^{-\kappa t}.
\]
Since ${\mathcal D}(0)=0$, comparison principle yields
\[
{\mathcal D}(t)
\le b {\mathcal S}_\infty^\alpha
\int_0^t
e^{-a {\mathcal S}_\infty^{-\alpha}(t-s)}e^{-\kappa s}\,\di s.
\]
We choose
\[
\lambda
:=
\frac12\min\{a {\mathcal S}_\infty^{-\alpha},\kappa\}.
\]
Then $\lambda\le a {\mathcal S}_\infty^{-\alpha}/2$ and
\[
\begin{aligned}
{\mathcal D}(t)
\le
b {\mathcal S}_\infty^\alpha e^{-\lambda t}
\int_0^t
e^{-(a {\mathcal S}_\infty^{-\alpha}-\lambda)(t-s)}
e^{-(\kappa-\lambda)s}\,\di s
\le
\frac{b {\mathcal S}_\infty^\alpha}{a {\mathcal S}_\infty^{-\alpha}-\lambda}e^{-\lambda t}
\le
\frac{2b {\mathcal S}_\infty^\alpha}{a {\mathcal S}_\infty^{-\alpha}}e^{-\lambda t}
=
\frac{2b}{a}{\mathcal S}_\infty^{2\alpha}e^{-\lambda t}.
\end{aligned}
\]
This verifies \eqref{F-18} with the explicit constants claimed in \eqref{A.4}.

\end{document}